\newcommand{\R}{\mathbb{R}}
\newcommand{\E}{\mathbb{E}}
\renewcommand{\P}{\mathbb{P}}
\newcommand{\e}{\mathbb{E}}
\newcommand{\tb}{2\beta}
\newcommand{\la}{\langle}
\newcommand{\ra}{\rangle}
\newtheorem{theorem}{Theorem}[section]
\newtheorem{lemma}[theorem]{Lemma}
\newtheorem{prop}[theorem]{Proposition}
\newtheorem{remark}[theorem]{Remark}
\newtheorem{conjecture}[theorem]{Conjecture}
\begin{document}

\begin{frontmatter}
%%%%%%%%%%%%%%%%%%%%%%%%%%%%%%%%%%%%%%%%%%%%%%
%%                                          %%
%% Enter the title of your article here     %%
%%                                          %%
%%%%%%%%%%%%%%%%%%%%%%%%%%%%%%%%%%%%%%%%%%%%%%
\title{Free energy of a diluted spin glass model \\
	with quadratic Hamiltonian}
%\title{A sample article title with some additional note\thanksref{T1}}
\runtitle{Free energy of a diluted spin glass model
	with quadratic Hamiltonian}
%\thankstext{T1}{A sample of additional note to the title.}

\begin{aug}
%%%%%%%%%%%%%%%%%%%%%%%%%%%%%%%%%%%%%%%%%%%%%%%
%% Only one address is permitted per author. %%
%% Only division, organization and e-mail is %%
%% included in the address.                  %%
%% Additional information can be included in %%
%% the Acknowledgments section if necessary. %%
%%%%%%%%%%%%%%%%%%%%%%%%%%%%%%%%%%%%%%%%%%%%%%%
\author[A]{\fnms{Ratul} \snm{Biswas}\ead[label=e1]{biswa087@umn.edu}},
\author[A]{\fnms{Wei-Kuo} \snm{Chen}\ead[label=e2,mark]{wkchen@umn.edu}}
\and
\author[A]{\fnms{Arnab} \snm{Sen}\ead[label=e3,mark]{arnab@umn.edu}}
%%%%%%%%%%%%%%%%%%%%%%%%%%%%%%%%%%%%%%%%%%%%%%
%% Addresses                                %%
%%%%%%%%%%%%%%%%%%%%%%%%%%%%%%%%%%%%%%%%%%%%%%
\address[A]{University of Minnesota, \printead{e1,e2,e3}}

%\address[B]{???, \printead{e2,e3}}
\end{aug}

\begin{abstract}
We study a diluted mean-field spin glass model with a quadratic Hamiltonian. Our main result establishes the limiting free energy in terms of an integral of a family of random variables that are the weak limits of the quenched variances of the spins in the system with varying edge connectivity. The key ingredient in our argument is played by the identification of these random variables as the unique solution to a recursive distributional equation. Our results in particular provide the first example of the diluted  Shcherbina-Tirozzi model, whose limiting free energy can be derived at any inverse temperature and external field.  
\end{abstract}

\begin{keyword}[class=MSC]
\kwd[Primary ]{60B20, 60G09, 60K35, 82B44}
%\kwd{???}
%\kwd[; secondary ]{???}
\end{keyword}

\begin{keyword}
\kwd{Diluted model}
\kwd{Gardner problem}
\kwd{Shcherbina-Tirozzi model}
\end{keyword}

\end{frontmatter}
%%%%%%%%%%%%%%%%%%%%%%%%%%%%%%%%%%%%%%%%%%%%%%
%% Please use \tableofcontents for articles %%
%% with 50 pages and more                   %%
%%%%%%%%%%%%%%%%%%%%%%%%%%%%%%%%%%%%%%%%%%%%%%
%\tableofcontents

%%%%%%%%%%%%%%%%%%%%%%%%%%%%%%%%%%%%%%%%%%%%%%
%%%% Main text entry area:

\section{Introduction}
In the study of fully connected mean-field spin glass models,  intensive investigations on the Sherrington-Kirkpatrick (SK) model as well as their variants have attracted a lot of attention and resulted in many rigorous mathematical results justifying physicists' observations and theories for the past decades, see \cite{MPV87,Pan13,Tal111,Tal112}. While the strength of the spin interactions in the SK model is uniformly defined across all sites, a set of more realistic mean-field spin glass models, called the diluted spin glasses, was introduced to capture the situtation in which the spins are only allowed to interact with a bounded number of neighbors on average, see, for instance, \cite{KS87,MP01,VB85}. In particular, within the Parisi framework of replica symmetry breaking theory,  the work of M\'ezard and Parisi \cite{MP01} proposed an ansatz to understand a spin glass model in the Bethe lattice. Later several attempts to justify the ansatz of \cite{MP01}  have been conducted in the setting of the diluted $p$-spin model and the diluted random $K$-SAT model, see \cite{coja2019spin,franz2003replica,Pan13Diluted,Pan14Diluted3,Pan15Diluted2,PT04}. Studies of high temperature behavior of diluted models have also  been performed in the SK model \cite{kosters2006fluctuations}, the $V$-statistics model \cite{talagrand2016mean}, the random $K$-SAT model \cite{monasson1997statistical,Pan14Diluted4,Tal01KSAT}, and the Viana-Bray model \cite{guerra2004high}.

Besides the studies above, a natural class of mean-field models that is of great importance and interest is the Shcherbina-Tirozzi (ST) model \cite{shcherbina2002central,shcherbina2003rigorous} which originated from the study of Gardner's problem \cite{gardner1987max,gardner1988space}. See \cite{bolthausen2021gardner,stojnic2013another} for more recent studies of Gardner's problem. In addition to being defined through a full rank disorder matrix that captures the mean-field interactions among all sites as in the SK model, the Hamiltonian in the ST model is set to be convex in the spin configuration space so that the corresponding Gibbs measure is strictly log-concave. Due to this feature, the usual concentration inequality for log-concave measures, such as the Brascamp-Lieb inequality, readily implies that any Lipschitz observable is concentrated around its Gibbs average. Consequently, it is expected that the solution to the ST model should always be replica symmetric in the sense that the Gibbs expectations of the spins are asymptotically independent and distributed as a function of i.i.d.~Gaussian random variables, as an outcome of central limit theorem, parametrized by a set of order parameters that are the solution to a system of consistency equations. If the latter is known to possess a unique solution, the replica symmetric solution can be justified and further used to compute the limiting free energy, see \cite{barbier2021performance,shcherbina2002central,shcherbina2003rigorous,Tal111}. However, while this is usually the case when the model is at very high temperature, the completely solvable cases in the entire temperature regime are only known in the Gardner model as well as in the quadratic model, see \cite{barbier2021performance,shcherbina2003rigorous}.

The diluted version of the ST model was proposed in Talagrand's book \cite[Section 6.7]{Tal111}, where the spin interaction is described instead by a sparse random matrix. As in the original ST model, one again expects the diluted ST model to exhibit replica symmetric solutions at any temperature, but this mechanism becomes more subtle, due to the sparsity of the disorder matrix, in which the Gibbs expectations of the spins should be governed by certain probability distribution that is the fixed-point solution to some distributional equation. Under the setting that the Hamiltonian is Lipschitz, the rigorous proof of this description has been carried out in \cite[Section 6.7]{Tal111}, but there remains a missing key ingredient, that is, the justification of the uniqueness of the fixed-point solution to the underlying distributional equation at any temperature, see \cite[Research Problem 6.7.14]{Tal111}. If this is achieved, then one can express the limiting free energy of the model in terms of this solution via the Aizenmann-Sims-Starr scheme \cite{ASS03}. Incidentally, such uniqueness can be achieved by a contraction argument at very high temperature, see \cite[Theorem 6.6.1]{Tal111}. However, validating this behavior throughout the entire temperature regime is a challenging problem in any reasonable setting.

In the present paper, we propose to study a diluted ST model with a quadratic Hamiltonian disordered by a symmetric distribution with finite second moment. Under our setting, the Gibbs measure readily reads as a high-dimensional Gaussian measure with covariance matrix $A_N^{-1}$, where $A_N$ is an $N\times N$ sparse random matrix written as the sum of the identity matrix along with a Poisson number of rank-one matrices, where the edges are formed only among $p$ uniformly chosen vertices. Furthermore, the free energy can be written as a function of the matrix $A_N$. Our study achieves two main results. First of all, we show that the joint spin variances, $(A_{N}^{-1})_{11},\ldots,(A_N^{-1})_{nn},$ are asymptotically independent and identically distributed. More importantly, we identify the weak limit of these objects as the unique fixed-point solution to a distributional equation. Based on this, the second main result settles a neat expression for the limiting free energy by a cavity computation. 

We emphasize that although our study specializes to the quadratic Hamiltonian, it turns out that the analysis is highly dedicated to reflecting the nature of dilution of the model in spite of being replica symmetric. To the best of our knowledge, this is the first example of the diluted ST model for which the limiting free energy can be completely solved in the entire temperature regime and at any external field. We believe some of the ideas in our approach might be potentially useful in understanding related diluted ST models. Additionally, the presence of the matrix $A_N$ can be viewed as a model for covariance matrix constructed from $p$ sparse vectors. It is an interesting random matrix model on its own and is worth investigating.

\subsection{The model and main results}

Consider a positive real $\alpha$ and a natural number $p$. Let $\beta>0$ and $h\in \mathbb{R}$ be the (inverse) temperature and external field parameters, respectively. For any $N\geq 1$, define the quadratic Hamiltonian by
\begin{align*}
	-H_N(\sigma) = -\beta \sum_{k=1}^M \Bigl( \sum_{r = 1}^p g_{k,I(k,r)} \sigma_{I(k,r)}\Bigr)^2 + h\sum_{i=1}^N \sigma_i
\end{align*}
for $\sigma\in \mathbb{R}^N,$
where $M$ is a Poisson random variable with mean $\alpha N$, the disorder matrix $g = (g_{k,i})_{k \geq 1, 1\leq i \leq N}$ consists of i.i.d.\ entries sampled from a symmetric distribution $\mathcal{D}$ with finite second moment, and 
\begin{align}\label{add:eq22}
	\mathcal{I}=\big( I(k,1),\ldots, I(k,p) \big ) _{k\geq 1}
\end{align} is a collection of  i.i.d. random vectors sampled uniformly from the index set $\{(i_1,\ldots,i_p):1\leq i_1,\ldots,i_p\leq N\,\,\mbox{are distinct}\}.$ All randomness here are independent of each other. Denote by $\eta_N$ the $N$-dimensional standard Gaussian measure on $\mathbb{R}^N.$ Define the free energy and the Gibbs measure associated to $H_N$, respectively, by
\begin{align*}
	F_N=\frac{1}{N}\log \int e^{-H_N(\sigma)}\eta_N(d\sigma)
\end{align*}
and
\begin{align*}
	G_N(d\sigma)&=\frac{e^{-H_N(\sigma)}\eta_N(d\sigma)}{\int e^{-H_N(\sigma')}\eta_N(d\sigma')}.
\end{align*}
For i.i.d.\ samples (or replicas) $\sigma,\sigma^1,\sigma^2,\ldots$ from $G_N$, we use $\la \cdot\ra$ to denote the Gibbs expectation with respect to these random variables. 
In view of the quadratic Hamiltonian $H_N$, the Gibbs measure is a Gaussian measure in $\mathbb{R}^N$ (explained in detail in Section \ref{sec:concentration}) with mean $\mu_N=hA_N^{-1}\mathbbm{1}$ (where $\mathbbm{1} = (1,\ldots, 1) \in \R^N$) and covariance matrix $A_N^{-1}$, where $A_N$ is a sparse $N\times N$ matrix defined  as \begin{align}\label{def_A}
	A_N = I_N+ 2\beta \sum_{k=1}^M v_kv_k^T.
\end{align}
Here, $I_N$ is an $N\times N$ identity matrix and $v_k$ is the vector, whose nonzero entries are  $g_{k, I(k,r)}$ at the position $I(k,r)$ for $1 \le r \le p$. The free energy can  be computed explicitly in terms of $\mu_N$ and $A_N,$
\begin{align}\label{add:eq1}
	F_N&=\frac{h}{2}\frac{\sum_{i=1}^N(\mu_N)_i}{N}+\frac{\log \det A_N}{2N}=\frac{h^2}{2}\frac{\mathbbm{1}^TA_N^{-1}\mathbbm{1}}{N}+\frac{\log \det A_N}{2N}.
\end{align}

To prepare for the statements of our main results, we introduce an operator associated to our model. Let $\mathcal{P}([0,1])$ be the space of probability distributions on $[0,1]$.
%equipped with the Wasserstein $q$-distance for $q \geq 1$, namely, $$W_q(\mu_1,\mu_2)=(\inf \e|Y_1-Y_2|^q)^{1/q},$$
%where the infimum is taken with respect to all joint distributions $(Y_1,Y_2)$ satisfying that $Y_1\sim \mu_1$ and $Y_2\sim \mu_2.$ 
Let $T = T_\alpha$ be a self map on $\mathcal{P}([0,1])$ defined as
\begin{align}\label{fixedpointeq}
	T(\mu)=\mbox{Law of\,\,}\Bigl(1+\sum_{k=1}^{R}\frac{2\beta\zeta_k^2}{1+2\beta \sum_{r=1}^{p-1}X_{k,r}\xi_{k,r}^2}\Bigr)^{-1},
\end{align} 	
where $(\zeta_k)_{k\geq 1}$ and $(\xi_{k,r})_{k\geq 1, 1 \le r \leq p-1}$ are i.i.d.\ distributed according to the random variable $\mathcal{D}$ and $(X_{k,r})_{k\geq 1, 1 \le  r \leq p-1}$ are i.i.d.\ distributed according to $\mu\in\mathcal{P}([0,1]),$ $R$ is Poisson$(\alpha p),$ and these are all independent of each other. Our first main result says that $T$ is the key operator to describe the weak limit of the spin variances.

\begin{theorem}[Distribution and asymptotic independence of spin variances]
	\label{thm:fixedpoint}
	For any $\alpha>0$, $T_\alpha$ has a unique fixed point, $\mu(\alpha).$ Moreover, 	for any $n\geq 1,$ the vector of spin variances 
	\begin{align*}
		\bigl(\la (\sigma_1-\la\sigma_1\ra)^2\ra,\ldots,\la (\sigma_n-\la \sigma_n\ra)^2\ra\bigr)=\bigl((A_N^{-1})_{11},\ldots,(A_N^{-1})_{nn}\bigr)
	\end{align*}
	converges weakly to $(X_1,\ldots,X_n)$ as $N$ tends to infinity, where $X_1,\ldots,X_n$ are i.i.d.\ with a common distribution $\mu(\alpha).$ 
\end{theorem}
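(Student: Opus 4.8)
\emph{Overview.} I would split the statement into its two parts: (i) $T_\alpha$ has a unique fixed point, and (ii) the spin variances $(A_N^{-1})_{11},\dots,(A_N^{-1})_{nn}$ converge to i.i.d.\ copies of that fixed point. Part (ii) will follow from (i) together with a local analysis of the sparse matrix $A_N$; part (i) is the genuinely delicate point.

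\emph{Existence and squeezing of fixed points.} The map $T$ sends $\mathcal{P}([0,1])$ into itself, since the random variable inside \eqref{fixedpointeq} lies in $(0,1]$ almost surely; realizing convergent inputs by a Skorokhod coupling shows $T$ is continuous for weak convergence, and $T$ is monotone for the stochastic order, because increasing the $X_{k,r}$'s enlarges every denominator $1+2\beta\sum_r X_{k,r}\xi_{k,r}^2$, hence shrinks every summand, hence enlarges the output. As $T(\delta_1)\preceq\delta_1$ and $T(\delta_0)\succeq\delta_0$, the sequences $T^m(\delta_1)$ and $T^m(\delta_0)$ are monotone and converge weakly on $[0,1]$ to fixed points $\mu_+\succeq\mu_-$, and monotonicity forces $\mu_-\preceq\mu\preceq\mu_+$ for every fixed point $\mu$. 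Thus part (i) is equivalent to the single identity $\mu_-=\mu_+$.

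\emph{Uniqueness.} To prove $\mu_-=\mu_+$ the right metric is the Wasserstein distance $W$ on $\mathcal{P}([0,1])$ built from $d(x,y)=|\log x-\log y|$, which is finite on the pair $(\mu_-,\mu_+)$ because every fixed point satisfies $\int\log(1/x)\,d\mu=\mathbb{E}\log\!\bigl(1+\sum_k D_k\bigr)\le\mathbb{E}\sum_k D_k\le 2\alpha p\beta\,\mathbb{E}_{\mathcal{D}}[\zeta^2]<\infty$, where $D_k:=2\beta\zeta_k^2/(1+2\beta\sum_r X_{k,r}\xi_{k,r}^2)$. The mechanism that excludes low-temperature multiplicity — and which is special to the quadratic Hamiltonian, encoding $A_N\succeq I_N$ — is the Jacobian identity, with $X_o$ the output of \eqref{fixedpointeq},
\[
\sum_{k,r}\Bigl|\frac{\partial\log X_o}{\partial\log X_{k,r}}\Bigr|
=\sum_k\frac{D_k}{1+\sum_j D_j}\cdot\frac{2\beta\sum_r X_{k,r}\xi_{k,r}^2}{1+2\beta\sum_r X_{k,r}\xi_{k,r}^2}
\ \le\ 1-X_o\ <\ 1\quad\text{a.s.}
\]
Its expectation over a fixed point is at most $1-\mathbb{E}_{\mu_-}[X]<1$ since $X>0$ a.s., so a contraction is morally in force at all temperatures. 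Converting this into the statement $W(\mu_-,\mu_+)=0$ is the heart of the matter: one couples the inputs of $T(\mu_-)$ and $T(\mu_+)$ monotonically on a common Poisson$(\alpha p)$ Galton--Watson hypertree and propagates the identity, but a naive telescoping over the leaves of the depth-$m$ tree is not enough in the supercritical regime (the number of leaves outpaces the per-path decay), so the argument must exploit the dependence between the Jacobian weights and the leaf discrepancies and survive the case of an infinite hypertree; I expect this to be the main technical obstacle. (Alternatively one can identify $\mu_-$ and $\mu_+$ as the laws of $\langle e_o,A^{-1}e_o\rangle$ for the limiting hypertree operator $A\succeq I$, which agree by strong resolvent convergence of the truncations used below, but this still rests on self-adjointness of $A$.) Write $\mu(\alpha):=\mu_-=\mu_+$.

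\emph{Weak limit of the spin variances and asymptotic independence.} Since the Gibbs measure is Gaussian with covariance $A_N^{-1}$, we have $\langle(\sigma_i-\langle\sigma_i\rangle)^2\rangle=(A_N^{-1})_{ii}$. Fix $i$ and let $B_\ell(i)$ be the radius-$\ell$ ball of $i$ in the hypergraph $\mathcal{I}$. Two positive-semidefinite comparisons give, almost surely for every $\ell$,
\[
\bigl((A_N|_{B_\ell(i)})^{-1}\bigr)_{ii}\ \le\ (A_N^{-1})_{ii}\ \le\ \bigl(\widehat A_{B_\ell(i)}^{-1}\bigr)_{ii},
\]
where $A_N|_{B_\ell(i)}$ is the principal submatrix on $B_\ell(i)$ (by the Schur complement, deleting the exterior vertices can only decrease a diagonal entry of the inverse) and $\widehat A_{B_\ell(i)}=I_N+2\beta\sum_{k:\,I(k,\cdot)\subseteq B_\ell(i)}v_kv_k^T$ is the matrix of the sub-hypergraph induced on $B_\ell(i)$ (deleting the hyperedges that straddle $B_\ell(i)$ can only increase the inverse). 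For fixed $\ell$, $B_\ell(i)$ is a tree with probability $\to1$ and converges in distribution, as a $\mathcal{D}$-weighted rooted hypergraph, to the depth-$\ell$ Poisson$(\alpha p)$ Galton--Watson hypertree; on a tree the diagonal entry of the inverse is computed by a Schur-complement recursion along hyperedges which is exactly the map inside \eqref{fixedpointeq}, so running it from the correct boundary values shows that, as $N\to\infty$, the left- and right-hand sides above converge in law to $T^{\,\ell+1}(\delta_0)$ and $T^{\,\ell}(\delta_1)$ respectively. By the uniqueness just established, both converge weakly to $\mu(\alpha)$ as $\ell\to\infty$, and the pathwise sandwich yields $(A_N^{-1})_{ii}\Rightarrow\mu(\alpha)$. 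Finally, for fixed $n$ and $\ell$ the balls $B_\ell(1),\dots,B_\ell(n)$ are pairwise disjoint with probability $\to1$, on which event the induced sub-hypergraphs are asymptotically i.i.d.; so the bracketing vectors converge to $n$ i.i.d.\ copies of the tree quantities, which approach $\mu(\alpha)^{\otimes n}$, and the coordinatewise sandwich forces $\bigl((A_N^{-1})_{11},\dots,(A_N^{-1})_{nn}\bigr)\Rightarrow\mu(\alpha)^{\otimes n}$. The only step demanding substantial work is the identity $\mu_-=\mu_+$; the rest is soft (local weak convergence, positive-semidefinite monotonicity) or an explicit tree recursion.
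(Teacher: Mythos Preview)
Your uniqueness argument has a genuine gap: you correctly compute the Jacobian bound $\sum_{k,r}\bigl|\partial\log X_o/\partial\log X_{k,r}\bigr|\le 1-X_o<1$ in the log coordinates, but as you yourself note, this does not yield a contraction in any Wasserstein-$1$ sense, because the bound depends on the output and hence on the inputs and so cannot be decoupled from the leaf discrepancies; your operator-theoretic alternative (essential self-adjointness of the infinite-hypertree operator) is also left unverified. The paper's resolution is to pass to the cruder but \emph{input-independent} bound
\[
\sup_{y}\ \sum_{k,r}\bigl|\partial_{y_{k,r}}g_R(y)\bigr|\ \le\ \frac{\chi_R}{\gamma+\chi_R},\qquad \chi_R=\sum_{k\le R}\zeta_k^2,\quad \gamma=(2\beta)^{-1},
\]
and then to work in the Wasserstein-$q$ distance on $\mathcal P_q(\mathbb R_+)$ for $q$ large. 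Because this supremum depends only on the $\zeta_k$'s, it is independent of the coupled inputs $(Y_{k,r},Y'_{k,r})$, and one obtains
\[
W_q^q(\mathcal T\nu_1,\mathcal T\nu_2)\ \le\ \mathbb E\!\left[\Bigl(\frac{\chi_R}{\gamma+\chi_R}\Bigr)^{\!q} R(p-1)\right] W_q^q(\nu_1,\nu_2).
\]
Since $\chi_R/(\gamma+\chi_R)<1$ almost surely, dominated convergence drives the bracket below $1$ once $q$ is large enough, giving a genuine contraction at \emph{every} temperature. Raising to a high power is precisely what kills the branching factor $R(p-1)$ that obstructs your $W_1$ telescoping in the supercritical regime.

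Your route to the convergence and asymptotic independence of $\bigl((A_N^{-1})_{11},\dots,(A_N^{-1})_{nn}\bigr)$, by contrast, is correct once uniqueness is in hand, and it is genuinely different from the paper's. The paper establishes asymptotic independence through a substantial concentration argument for generalized multi-overlaps under the centered Gibbs measure --- Brascamp--Lieb for the thermal fluctuations, and a martingale-difference bound on a coupled free energy that rests on a delicate conditional fourth-moment estimate for the spins --- and then separately derives the approximate distributional recursion for $(A_N^{-1})_{NN}$ via the Woodbury identity together with smallness of random off-diagonal entries of $B_N^{-1}$. Your monotone sandwich between the principal-submatrix inverse and the edge-deleted inverse, combined with local weak convergence of the neighborhood to the Galton--Watson hypertree (on which the Schur recursion is exactly the map $T$), delivers both the marginal limit and the joint i.i.d.\ structure in one stroke; it uses only PSD monotonicity and sparse-graph geometry, and it does not need the intermediate boundedness assumption on $\mathcal D$ that the paper imposes in Sections~4--6 and later removes by truncation. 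The trade-off is that the paper's overlap-concentration machinery is of independent interest and would transfer to observables of the Gibbs measure not accessible through a pathwise sandwich on $A_N^{-1}$.
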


Now, using the fixed point distribution, we establish an expression for the limiting free energy. For $x \in (0,1]$, denote by $\mu(\alpha x)$ the unique solution to the distributional equation $T(\mu) = \mu$ in which $R $ is replaced with $\mbox{Poisson}(\alpha xp)$. 

\begin{theorem}[Free energy]\label{thm:freeenergy}
	Let $\alpha>0$ and $p\in \mathbb{N}.$ For any $\beta>0$ and $h\in \mathbb{R}$, we have that 	$$ F_N \xrightarrow[N\to\infty]{L_1} \frac{h^2}{2}\E X(1) + \frac{\alpha}{2} \int_0^1 \E \log \Bigl(1 + \tb \sum_{r=1}^p \zeta_r^2 X_r(x)\Bigr)dx,$$ where  for any $x\in (0,1],$ $X(x), X_r(x) \stackrel{i.i.d.}{\sim} \mu(\alpha x)$, $\zeta_r \stackrel{i.i.d.}{\sim}\mathcal{D}$, and they are independent of each other. 
\end{theorem}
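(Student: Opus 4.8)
\emph{Proof strategy.} The plan is to start from the explicit expression \eqref{add:eq1} for $F_N$: since $F_N$ concentrates about its mean, it suffices to compute $\lim_{N\to\infty}\E F_N$, and one handles the two summands of \eqref{add:eq1} separately. For the field term, first note that $\E[(A_N^{-1})_{ij}]=0$ whenever $i\neq j$: because $\mathcal D$ is symmetric, flipping the signs of all the variables $g_{k,i}$, $k\ge 1$ (a law-preserving map that leaves $M$ and $\mathcal I$ fixed) replaces $A_N$ by $D_iA_ND_i$, where $D_i=\mathrm{diag}(1,\dots,-1,\dots,1)$ with the $-1$ in slot $i$, hence replaces $(A_N^{-1})_{ij}$ by $-(A_N^{-1})_{ij}$. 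Therefore $\tfrac1N\E[\mathbbm{1}^TA_N^{-1}\mathbbm{1}]=\tfrac1N\sum_{i=1}^N\E[(A_N^{-1})_{ii}]=\E[(A_N^{-1})_{11}]$, and since $0\preceq A_N^{-1}\preceq I_N$ forces $(A_N^{-1})_{11}\in[0,1]$, Theorem~\ref{thm:fixedpoint} together with bounded convergence yields $\E[(A_N^{-1})_{11}]\to\E X(1)$, which is the first term.

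For the entropy term I would run a dilution-interpolation (cavity) argument. For $x\in[0,1]$ set $A_N(x)=I_N+\tb\sum_{k=1}^{M_x}v_kv_k^T$, where $(M_x)_{x\in[0,1]}$ is a Poisson process of rate $\alpha N$ with $M_1=M$; thus $A_N(0)=I_N$, $A_N(1)=A_N$, and for each fixed $x>0$ the matrix $A_N(x)$ has the law of the matrix \eqref{def_A} of the model with dilution $\alpha x$ in place of $\alpha$. Differentiating in $x$ (the standard Poisson-point-addition formula) and using the matrix determinant lemma gives
$$\frac1N\frac{d}{dx}\,\E\log\det A_N(x)=\alpha\,\E\Bigl[\log\bigl(1+\tb\,w^TA_N(x)^{-1}w\bigr)\Bigr],$$
where $w$ is a fresh edge vector, independent of $A_N(x)$, supported on $p$ uniform distinct coordinates $J_1,\dots,J_p$ with i.i.d.\ entries $\zeta_1,\dots,\zeta_p\sim\mathcal D$. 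Writing $w^TA_N(x)^{-1}w=\sum_r\zeta_r^2(A_N(x)^{-1})_{J_rJ_r}+\sum_{r\neq s}\zeta_r\zeta_s(A_N(x)^{-1})_{J_rJ_s}$, the cross terms tend to $0$ in $L^1$ since $\E[(A_N(x)^{-1})_{J_rJ_s}^2]=\tfrac1{N(N-1)}\E\sum_{i\neq j}(A_N(x)^{-1})_{ij}^2\le\tfrac1{N(N-1)}\E\,\mathrm{tr}(A_N(x)^{-2})\le\tfrac1{N-1}$ and $\E|\zeta|<\infty$; and, by exchangeability of coordinates and Theorem~\ref{thm:fixedpoint} applied \emph{at parameter $\alpha x$}, the diagonal vector $\bigl((A_N(x)^{-1})_{J_1J_1},\dots,(A_N(x)^{-1})_{J_pJ_p}\bigr)$ converges weakly to i.i.d.\ $X_1(x),\dots,X_p(x)\sim\mu(\alpha x)$. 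Hence $w^TA_N(x)^{-1}w\Rightarrow\sum_{r=1}^p\zeta_r^2X_r(x)$; since $0\le w^TA_N(x)^{-1}w\le\|w\|^2=\sum_r\zeta_r^2$, the integrable majorant $\tb\sum_r\zeta_r^2$ lets one pass to the limit in expectation, so for each $x\in(0,1]$ the displayed derivative converges to $\alpha\,\E\log(1+\tb\sum_{r=1}^p\zeta_r^2X_r(x))$. This derivative being bounded uniformly in $N$ and $x$ by $\alpha\,\E\log(1+\tb\sum_r\zeta_r^2)<\infty$, integrating over $x\in[0,1]$, using $\E\log\det A_N(0)=0$ and dominated convergence, gives $\tfrac1{2N}\E\log\det A_N\to\tfrac\alpha2\int_0^1\E\log(1+\tb\sum_{r=1}^p\zeta_r^2X_r(x))\,dx$. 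Adding the two terms produces $\lim\E F_N$, and the concentration of $F_N$ promotes it to the asserted $L_1$ convergence.

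The main obstacle I anticipate is precisely this entropy computation: recognizing the dilution derivative of $\E\log\det A_N(x)$ as an average of the single quadratic form $w^TA_N(x)^{-1}w$ against the resolvent, invoking Theorem~\ref{thm:fixedpoint} \emph{at the rescaled intensity $\alpha x$} to identify the weak limit of the relevant diagonal resolvent entries at the (independent, uniformly placed) coordinates $J_1,\dots,J_p$, and simultaneously controlling the off-diagonal cross-terms and securing the domination --- here from $A_N(x)^{-1}\preceq I_N$ --- that upgrades weak convergence to convergence in expectation, all under nothing stronger than a finite second moment of $\mathcal D$. The remaining interchanges (the $x$-derivative with the expectation, and the limit with the $x$-integral) should be routine given these uniform bounds, but they do rely on the concentration of $F_N$ about its mean, which under only a second-moment hypothesis would itself call for a brief truncation of the disorder.
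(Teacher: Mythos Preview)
Your proposal is correct and follows essentially the same route as the paper: reduce to $\E F_N$ via concentration, kill the off-diagonals of $A_N^{-1}$ in the field term by the sign-flip symmetry (the paper proves $\E A_{ij}^{-1}=0$ via a power-series argument, but your $D_i$-conjugation is equivalent and cleaner), and handle $\E\log\det A_N$ by a cavity computation in the number of edges, invoking Theorem~\ref{thm:fixedpoint} at intensity $\alpha x$ and bounding the cross terms exactly as you do. The only cosmetic difference is that the paper telescopes $\log\det A_N=\sum_{l=1}^M\log(1+\tb v_l^TS_{l-1}^{-1}v_l)$ and then uses a Poisson identity (their Lemma~7.1: if $L\mid M\sim\mathrm{Unif}\{0,\dots,M\}$ then $L\stackrel{d}{=}\mathrm{Poisson}(\alpha UN)$ with $U\sim\mathrm{Unif}[0,1]$) to rewrite the sum as your $x$-integral, whereas you differentiate the Poisson interpolation directly; the two computations are equivalent, and your version is arguably more transparent.
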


The main feature in Theorem \ref{thm:freeenergy} is that although $F_N$ is written in terms of the mean and the covariance through \eqref{add:eq1}, the limiting free energy essentially depends  only on the spin variances of the system with varying edge connectivity. In addition, we mention that the formula here is obtained via a cavity argument in $M$, the number of edges. One can also perform a more conventional cavity computation in $N$, the number of sites, instead as in the Aizenmann-Sims-Starr scheme \cite{ASS03,Pan13Diluted}, but the resulting formula will become more complicated involving a difference of two major terms again in terms $\mu(\alpha x)$ for $x\in (0,1].$

\subsection{Open problems}

Following our study, there are a couple of problems yet to be understood. First, while our main result studies the limiting free energy, it is a relevant question to understand the asymptotic behavior of the Gibbs measure, i.e., the limiting distribution of $\sigma\sim N(\mu_N,A_N^{-1})$. From the convexity of $-H_N$, we expect that asymptotically the marginals of the spin configuration $\sigma$ are  i.i.d.\ Gaussians as $N$ tends to infinity. Thus, to figure out the limiting distribution of $\sigma$, we need to investigate the limiting joint distribution of $
(\la\sigma_1\ra,\la (\sigma_1-\la \sigma_1\ra)^2\ra)$, or equivalently, $(h\sum_{i=1}^N(A_N^{-1})_{1i},(A_N^{-1})_{11}).$
As we will discuss in Section \ref{proofsketch} on how $(A_N^{-1})_{11}$ converges to $X$ weakly via the Sherman-Morrison formula, it can also be checked (proof omitted in this paper) that if this pair converges weakly to some pair $(Y,X)$, then, when $p=2,$ the pair $(X,U)$ for $U:=Y/(hX)$ must satisfy the following consistency equation
\begin{align}\label{add:eq-4}
	(U,X)&\stackrel{d}{=}\Bigl( 1-\sum_{k=1}^{R}\frac{2\beta\zeta_k\xi_kX_k U_k}{1+2\beta\xi_k^2 X_k},\Bigl(1+\sum_{k=1}^{R}\frac{2\beta\zeta_k^2}{1+2\beta \xi_k^2X_{k}}\Bigr)^{-1}\Bigr),
\end{align} 	
where $(U_k,X_k)_{k\geq 1}$ are i.i.d.\ copies of $(U,X)$, $(\xi_k)_{k\geq 1}$ and $(\zeta_k)_{k\geq 1}$ are i.i.d.\ copies of $\mathcal{D},$ and $R$ is Poisson with mean $2\alpha.$ There are all assumed to be independent of each other. Theorem \ref{thm:fixedpoint} has shown that the second coordinate has a unique fixed point. We believe that the following is true.
\begin{conjecture} 
	\eqref{add:eq-4} has a unique fixed point.
\end{conjecture}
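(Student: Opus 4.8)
We sketch a possible approach. The plan is to exploit that the second coordinate of \eqref{add:eq-4} obeys, on its own, the recursive distributional equation of Theorem~\ref{thm:fixedpoint} with $p=2$, so that any fixed point of \eqref{add:eq-4} has second marginal $\mu(\alpha)$. Realize the coupled family $(U_u,X_u)_u$ on a Galton--Watson recursion tree $\mathbb{T}$ with $\mathrm{Poisson}(2\alpha)$ offspring and i.i.d.\ edge marks $(\zeta_u,\xi_u)\sim\mathcal{D}$. Since the $X$-recursion is monotone and, by Theorem~\ref{thm:fixedpoint}, has a unique fixed point, $\mu(\alpha)$ is endogenous (it is the monotone limit of the deterministic finite-depth iterations with leaf value $0$), so $X_\emptyset=g(\mathbb{T})$ for a measurable functional $g$. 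Freezing the $X$-values through $g$, equation \eqref{add:eq-4} turns into the affine recursion
\[
U_\emptyset=1-\sum_{k=1}^{R}c_kU_k,\qquad c_k:=\frac{2\beta\,\zeta_k\,\xi_k\,X_k}{1+2\beta\,\xi_k^2\,X_k},
\]
and the conjecture reduces to showing that this linear recursion has a unique $L^2$ solution. (Existence of a fixed point of \eqref{add:eq-4} in $\mathcal{P}_2$ should come from the finite-$N$ model: by exchangeability of the $N$ sites and $A_N\succeq I_N$ one has the cheap tightness bound $\mathbb{E}\bigl[((\mu_N)_1/h)^2\bigr]=\tfrac1N\mathbb{E}\bigl[\mathbbm{1}^TA_N^{-2}\mathbbm{1}\bigr]\le\tfrac1N\mathbb{E}\bigl[\mathbbm{1}^TA_N^{-1}\mathbbm{1}\bigr]\le 1$, and any subsequential limit satisfies \eqref{add:eq-4}.)

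The key mechanism is the symmetry of $\mathcal{D}$: each $c_k$ is an odd function of $\zeta_k$, whereas $X_\emptyset$ and all conditioning events below depend on $\zeta_k$ only through $\zeta_k^2$. Taking expectations in the affine recursion and using the identity $\mathbb{E}\bigl[\sum_{k=1}^{R}Y_k\bigr]=2\alpha\,\mathbb{E}[Y_1]$ for i.i.d.\ $(Y_k)$, the $\zeta_k$-odd contributions drop out, giving $\mathbb{E}[U\mid X]=1$ a.s.; squaring the recursion and again cancelling every off-diagonal term, the conditional second moment $W(x):=\mathbb{E}[U^2\mid X=x]$ satisfies the closed linear equation $W=1+\mathcal{K}W$, with
\[
(\mathcal{K}f)(x):=\mathbb{E}\Bigl[\,\sum_{k=1}^{R}c_k^2\,f(X_k)\;\Big|\;X_\emptyset=x\Bigr],
\]
and, for two solutions coupled through the same environment, the difference $\Delta:=U^{(1)}-U^{(2)}$ has conditional second moment $\widetilde{W}(x):=\mathbb{E}[\Delta^2\mid X=x]$ solving the homogeneous equation $\widetilde{W}=\mathcal{K}\widetilde{W}$. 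Hence it suffices to show that the positive operator $\mathcal{K}$ has no nonzero nonnegative eigenfunction for eigenvalue $1$ in $L^1(\mu(\alpha))$: this forces $\widetilde{W}\equiv 0$, so $\Delta\equiv 0$ and the fixed point is unique.

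The reason this should hold at all temperatures is the identity, obtained by differentiating $X_\emptyset=\bigl(1+\sum_k 2\beta\zeta_k^2/(1+2\beta\xi_k^2X_k)\bigr)^{-1}$,
\[
c_k^2=\frac{X_k^2}{X_\emptyset^2}\,\frac{\partial X_\emptyset}{\partial X_k},
\]
which exhibits $\mathcal{K}=D^{-1}\mathcal{L}_XD$, where $D$ is the (bounded, $L^1(\mu(\alpha))$-preserving) multiplication by $x^2$ and $\mathcal{L}_X$ is the linearization of the $X$-recursion at its fixed point, $(\mathcal{L}_Xg)(x)=\mathbb{E}\bigl[\sum_k\tfrac{\partial X_\emptyset}{\partial X_k}g(X_k)\mid X_\emptyset=x\bigr]$. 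Consequently a nonnegative $\mathcal{K}$-eigenfunction $\widetilde{W}\in L^1(\mu(\alpha))$ at eigenvalue $1$ yields the nonnegative $\mathcal{L}_X$-eigenfunction $x^2\widetilde{W}\in L^1(\mu(\alpha))$, so the conjecture follows once one knows that the Jacobian $\mathcal{L}_X$ of the $X$-recursion at $\mu(\alpha)$ is subcritical for every $\beta>0$. This last point is the diluted counterpart of the stability condition responsible for the replica-symmetric behaviour of the quadratic ST model, and it is where the real difficulty is likely to lie: the naive bound gives only $\|\mathcal{K}\|_{L^1(\mu(\alpha))}\le\alpha\beta\,\mathbb{E}[\zeta^2]$, which covers only high temperature, so establishing subcriticality uniformly in $\beta$ will presumably require either a monotone comparison argument sharpening the proof of Theorem~\ref{thm:fixedpoint}, or a second-moment / interpolation computation carried out on the finite-$N$ Gibbs measure, where its strong log-concavity (e.g.\ Brascamp--Lieb) can be exploited. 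As a preliminary technical step one would run the argument with $\mathcal{D}$ truncated to have all moments and remove the truncation at the end.
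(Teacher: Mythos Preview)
The statement you are addressing is not a theorem of the paper but an open \emph{conjecture}, listed in Section~1.2 (``Open problems''); the paper offers no proof and none is expected. So there is no argument in the paper to compare your proposal against.

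What you have written is a programme, not a proof, and you say so yourself. Two points deserve emphasis.

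First, the endogeny claim for the $X$-recursion is asserted, not established. Monotonicity together with uniqueness of the fixed point does not automatically yield endogeny; the Aldous--Bandyopadhyay criterion requires more (typically a bivariate-uniqueness condition), and the observation that the finite-depth iterates from leaf value $0$ converge in law to $\mu(\alpha)$ gives only that the limit is \emph{measurable with respect to the tree plus the leaf inputs}, not that it is a function of the tree alone. Without endogeny, coupling two putative fixed points $(U,X)$ and $(U',X')$ through the same tree does not force $X=X'$ pointwise, and your reduction to a linear recursion for $\Delta=U-U'$ breaks down.

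Second, and more seriously, the heart of your proposal is the assertion that the linearization $\mathcal{L}_X$ of the $X$-recursion at $\mu(\alpha)$ has no nonnegative eigenfunction at eigenvalue $1$. You correctly identify this as ``where the real difficulty is likely to lie'' and do not prove it. The contraction argument of Lemma~\ref{add:lem9} operates in Wasserstein-$q$ on the \emph{conjugated} (log-scale) recursion for large $q$, and it is not clear how to translate that into an $L^1(\mu(\alpha))$ spectral bound for $\mathcal{L}_X$ on the original scale; the two operators are related through a nonlinear change of variables, and the norms are not comparable in any obvious way. Your identity $c_k^2=(X_k^2/X_\emptyset^2)\,\partial X_\emptyset/\partial X_k$ is correct and the similarity $\mathcal{K}=D^{-1}\mathcal{L}_XD$ is a nice observation, but it only repackages the problem: showing $\mathcal{K}$ has no fixed function is exactly as hard as showing $\mathcal{L}_X$ does. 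In short, your proposal is a coherent reduction of the conjecture to a spectral-gap statement that remains open.
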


Throughout the entire paper, we assume that the distribution of the disorder random variable $\mathcal{D}$ is symmetric. As one shall see, Theorem \ref{thm:fixedpoint} holds in general without this assumption, but in the derivation of the limiting free energy, we require that $\e (A_N^{-1})_{ij}=0$ and $\e (A_N^{-1})_{ij}(A_N^{-1})_{ik}=0$ for distinct $i,j,k$ in order to deduce that $F_N\approx \e F_N$ and the right-hand side ultimately depends only on the main diagonal terms of $A_N^{-1}$ so that we can handle the limiting free energy via  the fixed point of $T$. Our argument to show that the mean and covariance of the off-diagonal terms $(A_N^{-1})_{ij}$ vanish uses the assumption that $\mathcal{D}$ is symmetric. From numerical simulations, it seems to indicate that these behaviors remain asymptotically true without the symmetry assumption. We thus close this subsection with

\begin{conjecture} 
	Theorem \ref{thm:freeenergy} holds for any distribution $\mathcal{D}$ with finite second moment.
\end{conjecture}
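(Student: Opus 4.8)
The plan is to analyze the two terms in the explicit formula \eqref{add:eq1} separately --- the mean part $\frac{h^2}{2}\cdot\frac{\mathbbm{1}^TA_N^{-1}\mathbbm{1}}{N}$ and the entropy part $\frac1{2N}\log\det A_N$ --- show that each converges in $L^2$ to the matching piece of the asserted limit, and add. The mean part is handled by Theorem \ref{thm:fixedpoint} directly: decompose $\frac{\mathbbm{1}^TA_N^{-1}\mathbbm{1}}{N}=\frac1N\sum_i(A_N^{-1})_{ii}+\frac1N\sum_{i\neq j}(A_N^{-1})_{ij}$. Since $A_N\succeq I_N$ one has $(A_N^{-1})_{ii}\in[0,1]$ and $\sum_j(A_N^{-1})_{ij}^2=(A_N^{-2})_{ii}\le 1$, so by exchangeability of the coordinates $\E[(A_N^{-1})_{12}^2]\le\frac1{N-1}$; combined with $\E(A_N^{-1})_{ij}=0$ and $\E[(A_N^{-1})_{ij}(A_N^{-1})_{kl}]=0$ for $i\neq j$, $\{k,l\}\neq\{i,j\}$ (consequences of the sign-flip symmetry made available by the symmetry of $\mathcal D$), the off-diagonal sum is centered with second moment $O(1/N)$, hence $L^2$-negligible. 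For the diagonal sum, Theorem \ref{thm:fixedpoint} with $n=2$ gives $\E(A_N^{-1})_{11}\to\E X(1)$ and $\E[(A_N^{-1})_{11}(A_N^{-1})_{22}]\to(\E X(1))^2$ by bounded weak convergence, and with exchangeability this yields $\V{\frac1N\sum_i(A_N^{-1})_{ii}}\to0$; so the mean part converges in $L^2$ to $\frac{h^2}{2}\E X(1)$.

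For the entropy part I would run a cavity computation in the number of edges, Poissonized. Couple $(v_k)$ and $M$ to a rate-$N$ Poisson process and set $A_N^{(t)}=I_N+\tb\sum_{k:\,t_k\le t}v_kv_k^T$, so that $A_N^{(t)}$ has the law of the model at connectivity $t$ (i.e.\ $\poi{tN}$ edges) and $A_N=A_N^{(\alpha)}$. At each arrival time $\log\det$ increases, by the matrix determinant lemma, by $\log\bigl(1+\tb\,v^T(A_N^{(t)})^{-1}v\bigr)$, which lies in $[0,\log(1+\tb\|v\|^2)]$ because $I_N\preceq A_N^{(t)}$, and has finite first and second moments since $(\log(1+y))^2\le C(1+y)$ and $\E\|v\|^2=p\,\E\zeta^2<\infty$. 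The Mecke equation (equivalently, a direct computation revealing the Poissonian edges one at a time and using that their number is Poisson) then gives
\[
\frac1N\,\E\log\det A_N=\int_0^\alpha\E\Bigl[\log\bigl(1+\tb\,v^T(A_N^{(t)})^{-1}v\bigr)\Bigr]\,dt,
\]
with $v$ a fresh edge independent of $A_N^{(t)}$. Passing $N\to\infty$ inside the $t$-integral is justified by dominated convergence, the integrand being $\le\E\log(1+\tb\|v\|^2)$ uniformly in $N,t$. For fixed $t$, I would condition on the $p$ nonzero entries $\zeta_1,\dots,\zeta_p$ of $v$ and on its support; by exchangeability the corresponding principal $p\times p$ submatrix of $(A_N^{(t)})^{-1}$ has the law of its top-left $p\times p$ block, whose diagonal converges weakly to i.i.d.\ $\mu(t)$ by Theorem \ref{thm:fixedpoint} applied with parameter $t$, and whose off-diagonal entries tend to $0$ in $L^2$ because $\mathrm{tr}\bigl((A_N^{(t)})^{-2}\bigr)\le N$. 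Hence $v^T(A_N^{(t)})^{-1}v\to\sum_{r=1}^p\zeta_r^2 X_r(t/\alpha)$ weakly; as this quantity sits in the $\zeta$-dependent compact set $[0,\|v\|^2]$, on which $y\mapsto\log(1+\tb y)$ is bounded continuous, a further dominated convergence over $\zeta_1,\dots,\zeta_p$ (with integrable envelope $\tb\|v\|^2$) gives $\E[\log(1+\tb\,v^T(A_N^{(t)})^{-1}v)]\to\E\log\bigl(1+\tb\sum_r\zeta_r^2 X_r(t/\alpha)\bigr)$. Therefore $\frac1N\E\log\det A_N\to\alpha\int_0^1\E\log\bigl(1+\tb\sum_{r=1}^p\zeta_r^2X_r(x)\bigr)\,dx$ after the substitution $t=\alpha x$, so the entropy part of $\E F_N$ converges to $\frac\alpha2\int_0^1\E\log(1+\tb\sum_r\zeta_r^2X_r(x))\,dx$.

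To promote the convergence of $\E F_N$ to $L^1$ convergence of $F_N$, I would bound $\V{\frac1{2N}\log\det A_N}$. Conditionally on $M$, $\log\det A_N$ is a function of the i.i.d.\ edges $v_1,\dots,v_M$ whose value changes by at most $\log(1+\tb\|v_k\|^2)+\log(1+\tb\|v_k'\|^2)$ when one edge is resampled, a quantity of bounded second moment, so Efron--Stein gives $\V{\log\det A_N\mid M}=O(M)$; since $m\mapsto\E[\log\det A_N\mid M=m]$ is $O(1)$-Lipschitz, $\V{\E[\log\det A_N\mid M]}=O(\V M)=O(N)$. Hence $\V{\frac1{2N}\log\det A_N}=O(1/N)\to0$, which together with the $L^2$ control of the mean part yields $F_N\to\frac{h^2}{2}\E X(1)+\frac\alpha2\int_0^1\E\log(1+\tb\sum_{r=1}^p\zeta_r^2X_r(x))\,dx$ in $L^2$, hence in $L^1$.

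I expect the crux to be the cavity limit $\E[\log(1+\tb\,v^T(A_N^{(t)})^{-1}v)]\to\E\log(1+\tb\sum_r\zeta_r^2X_r(t/\alpha))$, where three points must be dealt with simultaneously: controlling the unbounded disorder entries $\zeta_r$ (handled by conditioning on them and exploiting that $v^T(A_N^{(t)})^{-1}v$ is trapped in the $\zeta$-dependent compact interval $[0,\|v\|^2]$), showing that only the diagonal of the relevant principal submatrix of $(A_N^{(t)})^{-1}$ survives the limit (via the a priori bound $\mathrm{tr}\bigl((A_N^{(t)})^{-2}\bigr)\le N$), and reducing the submatrix statistics to Theorem \ref{thm:fixedpoint} at connectivity $t$ by exchangeability. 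Making the Mecke identity for $\E\log\det A_N$ precise --- or, in the edge-by-edge formulation, replacing a deterministic edge count by the Poissonized model --- is the remaining technical point and should be routine.
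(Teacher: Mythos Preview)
The statement you are attempting to prove is a \emph{conjecture}, not a theorem: it asks whether Theorem~\ref{thm:freeenergy} remains valid when the symmetry hypothesis on $\mathcal D$ is dropped. The paper does not prove it; indeed, the discussion preceding the conjecture explains precisely where the symmetry assumption is used and why its removal is an open problem. Your proposal does not engage with this issue at all --- in fact, you explicitly invoke the symmetry of $\mathcal D$ at the decisive step.

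Concretely, in your treatment of the mean part $\frac{h^2}{2N}\mathbbm{1}^TA_N^{-1}\mathbbm{1}$ you write that $\E(A_N^{-1})_{ij}=0$ and $\E[(A_N^{-1})_{ij}(A_N^{-1})_{kl}]=0$ are ``consequences of the sign-flip symmetry made available by the symmetry of $\mathcal D$''. But the entire content of the conjecture is that $\mathcal D$ is \emph{not} assumed symmetric, so these identities are no longer available. Without them, the off-diagonal sum $\frac1N\sum_{i\neq j}(A_N^{-1})_{ij}$ is neither centered nor are its summands uncorrelated; the trace bound $\sum_j(A_N^{-1})_{ij}^2\le 1$ controls each entry in $L^2$ but says nothing about the $O(N^2)$ cross terms appearing in the second moment of the sum. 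This is exactly the obstruction the paper identifies. Your argument, as written, is therefore a re-derivation of Theorem~\ref{thm:freeenergy} under the original symmetry hypothesis (with a somewhat different packaging of the cavity-in-edges computation via the Mecke formula), not a proof of the conjecture.
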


\subsection{Structure of the paper}

The rest of the paper is organized as follows. In Section 2, we provide an outline for the proofs of Theorems \ref{thm:fixedpoint} and \ref{thm:freeenergy}. In Section 3, we establish the concentration of the free energy. Section 4 contains the proof of the concentration of the generalized multi-overlap of a coupled system that will be used in  Section~5, in which we establish the asymptotic independence of spin variances. In Section 6, we prove the uniqueness of $\mu$ as being the solution to the distributional equation $T(\mu) =\mu$ and show how the limiting distribution of the spin variances gives rise to the distributional operator $T$ and  is distributed according to $\mu$. Gathering these results, we complete the proofs of Theorems~\ref{thm:fixedpoint} and \ref{thm:freeenergy} in Section 7. For technical purposes, our Sections 4 to 7 are handled assuming that $\mathcal{D}$ is bounded. In Section 8, we remove this restriction and generalize Theorems \ref{thm:fixedpoint} and \ref{thm:freeenergy} to the case that $\mathcal{D}$ has finite second moment.

\section{Proof sketch}\label{proofsketch}

To facilitate our proofs for Theorems \ref{thm:fixedpoint} and \ref{thm:freeenergy}, we describe our approach and outline some basic ideas in this section. 
As one shall see, our proof of Theorem \ref{thm:fixedpoint} is long and consists of a number of key steps, but it contains some essential ideas that might be potentially useful in related models and problems. 
\smallskip

\subsection{Proof sketch of Theorem \ref{thm:fixedpoint}}

$\,\,$

\smallskip

{\noindent \bf (i) Uniqueness of the fixed point:} We do not know how to show that the map $T$ is contractive. Our idea is to introduce a new operator $\mathcal{T}$ by conjugating $T$ with the logarithm map and show that $\mathcal{T}$ is contractive. For $q\geq 1,$ let $\mathcal{P}_q(\mathbb{R}_+)$ be the space of probability distributions on $\mathbb{R}_+=[0,\infty)$ with $\int x^q\nu(dx)<\infty.$ The map $\mathcal{T}$ from $\mathcal{P}_q(\mathbb{R}_+)$ to $\mathcal{P}_q(\mathbb{R}_+)$ is defined as
$$\mathcal{T} = \phi \circ T \circ \phi^{-1}$$ for $\phi(x) =  - \log x$, where $\phi^{-1}(\nu)$ and $\phi(\mu)$ are understood as the push-forward measures of $\mu\in \mathcal{P}([0,1])$ and $\nu\in \mathcal{P}_q(\mathbb{R}_+)$ under $\phi$ and $\phi^{-1}$, respectively. To show that $T(\mu)=\mu$ has a unique solution,  it is enough to show that $\mathcal{T}(\nu) = \nu$ has a unique solution in $\mathcal{P}_q(\mathbb{R}_+)$. To accomplish this, we equip $\mathcal{P}_q(\mathbb{R}_+)$ with  the Wasserstein $q$-distance and argue (see Lemma~\ref{add:lem9}) that $\mathcal{T}$ is indeed a contraction for sufficiently large $q \geq 1$ (depending on $\beta$).

\smallskip

\noindent {\bf (ii) Independence of spin variances:} The most technical step in our argument is to show that for any fixed $n\geq 1,$ the first $n$ terms $$\bigl((A_N^{-1})_{11},\ldots,(A_N^{-1})_{nn}\bigr)$$ in the main diagonal of $A_N^{-1}$ are asymptotically independent as $N$ tends to infinity. To prove this, we shall view $\tau:=\sigma-\la \sigma\ra$ as a centered  spin configuration sampled from a (centered) Gibbs measure $G_N^c$ corresponding to the Hamiltonian,
$$
X_N(\tau):=\beta \sum_{k \leq M} \Bigl( \sum_{r = 1}^p g_{k,I(k,r)} \tau_{I(k,r)}\Bigr)^2.
$$
In view of \eqref{def_A}, $\tau\sim N(0,A_N^{-1})$ and it suffices to show that $\la |\tau_1|^2\ra^c,\ldots,\la |\tau_n|^2\ra^c$ are asymptotically independent, where $\la \cdot\ra^c$ is the Gibbs expectation associated to the measure $G_N^c$. We adapt an analogous treatment from the study of the diluted ST model in  \cite[Section 6.7]{Tal111} by showing that the generalized overlaps are concentrated under the annealed measure $\e \la \cdot\ra^c$. More precisely, for any $\kappa\geq 1$ and any function $\phi:\mathbb{R}^\kappa\to \mathbb{R}$ of mild growth,
\begin{align}\label{add:eq-2}
	\lim_{N\to\infty}\e \bigl\la \bigl|Q-\e\bigl\la Q\bigr\ra^c\bigr|\bigr\ra^c=0
\end{align}
for $Q:=N^{-1}\sum_{i=1}^N\phi(\tau_i^1,\ldots,\tau_i^\kappa),$ where $\tau^1,\ldots,\tau^\kappa$ are i.i.d.\ replicas drawn from $G_N^c$.
Once this is valid, we utilize the symmetry among the replicas and spins to conclude that the entries in any weak limit of $(\la |\tau_1|^2\ra^c,\ldots,\la |\tau_n|^2\ra^c)$ must be independent. 
Although this deduction will be detailed in the proof of Proposition \ref{add:prop:indep}, to foster intuition, we provide a quick explanation on how \eqref{add:eq-2} implies that $\la|\tau_1|^2\ra^c$ and $\la|\tau_2|^2\ra^c$ are asymptotically uncorrelated. Let $\tau^1$ and $\tau^2$ be two replicas drawn from $G_N^c$. Consider $Q_1: = N^{-1}\sum_{i=1}^N |\tau_i^1|^2$ and $Q_2: = N^{-1}\sum_{i=1}^N |\tau_i^2|^2$. Then we can write
	\begin{align*}
\E \langle |\tau_1|^2\rangle^c \langle |\tau_2|^2\rangle^c & = \E \langle |\tau_1^1|^2|\tau_2^2|^2\rangle^c \\
&= \frac{1}{N(N-1)}\sum_{1\leq i \neq j \leq N} \E \langle |\tau_i^1|^2|\tau_j^2|^2\rangle^c \\
& \approx \frac{1}{N^2}\sum_{1\leq i, j \leq N} \E \langle |\tau_i^1|^2|\tau_j^2|^2\rangle^c = \E\langle Q_1Q_2\rangle^c \\
& \approx \E \langle Q_1\rangle^c\e \langle Q_2\rangle^c = \E \langle |\tau_1|^2\rangle^c\E \langle |\tau_2|^2\rangle^c,
\end{align*}
where the first approximation follows because the $N$ diagonal terms have a vanishing contribution, while the second approximation utilizes the concentrations of $Q_1$ and $Q_2$. Additionally, the second and last equalities use the symmetry of the spins in $i$.

The justification of \eqref{add:eq-2} relies on showing two concentrations, 
\begin{align}\label{add:eq-20}
	\lim_{N\to\infty}\e\bigl\la |Q-\la Q\ra^c|\bigr\ra^c=0
\end{align}
and 
\begin{align}
	\label{add:eq-21}
	\lim_{N\to\infty}\e \bigl|\la Q\ra^c-\e\la Q\ra^c\bigr|=0.
\end{align} To this end, we consider the coupled Hamiltonian, $$
X_N(\tau^1)+\cdots+X_N(\tau^\kappa)-\lambda NQ(\tau^1,\ldots,\tau^\kappa),$$ and the associated Gibbs expectation $\la \cdot\ra_\lambda^c$ and free energy $F_N^c(\lambda).$ First of all, from the convexity of the Hamiltonian $X_N$, it can be checked that the Gibbs measure associated to this coupled Hamiltonian is  strictly log-concave for small enough $\lambda$ and as a result, the Brascamp-Lieb inequality (see \cite{brascamp2002extensions} or \cite[Theorem 3.1.4]{Tal111}) readily implies that there exists a constant $K>0$ such that $\e\la |Q-\la Q\ra_\lambda^c|^2\ra_\lambda^c\leq K/N$ for any $|\lambda|$ small enough and $N\geq 1$. In particular, this implies \eqref{add:eq-20}. Next, to show \eqref{add:eq-21}, we use the fact that $\frac{d}{d\lambda}F_N^c(\lambda)=\la Q\ra_\lambda^c$ and the convexity of $F_N^c(\lambda)$ to bound
\begin{align*}
	\e\bigl\la\bigl|\la Q\ra^c-\e\la Q\ra^c\bigr|\bigr\ra^c&\leq \frac{1}{\lambda}\bigl(\e \bigl|F_N^c(\lambda)-\e F_N^c(\lambda)\bigr|+\e \bigl|F_N^c(-\lambda)-\e F_N^c(-\lambda)\bigr|\\
	&\qquad +\e \bigl|F_N^c(0)-\e F_N^c(0)\bigr|\bigr) + \e {F_N^c}'(\lambda)-\e {F_N^c}'(-\lambda).
\end{align*}
Here the last term can be controlled by $$\e {F_N^c}'(\lambda)-\e {F_N^c}'(-\lambda)=\int_{-\lambda}^\lambda \e{F_N^c}''(t)dt=N\int_{-\lambda}^\lambda \e\bigl\la \bigl(Q-\la Q\ra_t^c\bigr)^2\bigr\ra_{t}^cdt\leq 2K\lambda$$ after using the bound in the first part. Therefore, as long as we can show that there exists some constant $K'>0$ such that
\begin{align}\label{add:eq-19}
	\max_{|t|\leq\lambda }\e \bigl|F_N^c(t)-F_N^c(t)\bigr|\leq \frac{K'}{\sqrt{N}}
\end{align}
for all small enough $\lambda$ and $N\geq 1$, taking $\lambda=N^{-1/4}$ would then yield the second concentration since $\e\bigl\la\bigl|\la Q\ra-\e\la Q\ra^c\bigr|\bigr\ra^c\leq (3K'+2K)N^{-1/4}.$ We achieve \eqref{add:eq-19} by making use of the martingale difference technique. At the center of our argument, we will need a fourth-moment bound $\e \bigl[\la |\tau_i^\ell|^4\ra_\lambda^c|M\bigr]\leq K''(1+(M/N)^2)$.  The subtlety here is that this bound appears to be far from reach if $\lambda$ is fixed, but fortunately, it holds when $\lambda$ vanishes in the order $N^{-1/4}$ (see Lemma \ref{add:lem1}) and this is good enough to establish \eqref{add:eq-19}.

\begin{remark}\rm
	In \cite[Section 6.7]{Tal111}, a similar strategy for \eqref{add:eq-2} was also adapted to deal with the concentration of $Q$ in the diluted ST model, where the Hamiltonian is Lipschitz and this condition is strong enough to show that the exponential moment of the spins is bounded. In contrast, our Hamiltonian is quadratic and this results in extra complications in the analysis. For instance, controlling the fourth moment of $\tau_1$ becomes substantially involved, see Lemma \ref{add:lem1} below, which is the most crucial step in proving \eqref{add:eq-19}. 
	%A similar  fourth moment bound for the uncentered spin $\sigma_1$, which could have simplified subsequent analysis,  remains elusive. 
\end{remark}

{\noindent \bf (iii) Convergence of $(A_N^{-1})_{NN}$.} Next we continue to show that $(A_N^{-1})_{NN} $ converges weakly to $\mu$, where  $\mu$ is the unique solution of $T(\mu) = \mu$. Clearly,  $((A_N^{-1})_{NN})_{N \ge 1}$ is a tight family of random variables since $0 \le (A_N^{-1})_{NN} \le 1$. 
The key step is to show that for large $N$, $(A_N^{-1})_{NN} $ obeys the following approximate distributional recursion 
\begin{equation}\label{eq:dist_fx_pt1}
	(A_N^{-1})_{NN}  \stackrel{d}{\approx} T ((A_N^{-1})_{NN}).
\end{equation}
In the above, $T(X)$ denotes  the random variable with law $T(\mu_X)$ for $X \sim \mu_X$. If \eqref{eq:dist_fx_pt1} holds, then any limit point $Z$ of $(A_N^{-1})_{NN}$  would satisfy $Z \stackrel{d}{=} T(Z)$. So, by the uniqueness of the fixed point of $\mu = T(\mu)$, we must have $Z \sim \mu$. This guarantees the weak convergence of $(A_N^{-1})_{NN} $  to $\mu$. Once we prove that the marginals converge, the joint convergence of $((A_N^{-1})_{11},\ldots,(A_N^{-1})_{nn})$ easily follows from their asymptotic independence.

To establish \eqref{eq:dist_fx_pt1}, using the thinning property of the Poisson random variable, we  divide $v_1,\ldots,v_{M}$ into two groups $(u_k)_{k\leq Q}$  and $(w_k)_{k\leq R}$, where $Q\sim\mbox{Poisson}(\alpha(N-p))$, $R\sim\mbox{Poisson}(\alpha p )$, and they are independent. In the first group, $N$ does not appear in the index set $(I(k, 1), \ldots, I(k, p))$ of  $v_k$ and in the second,  $N$ appears as one of  indices $I(k, 1), \ldots, I(k, p)$.  Thus we can write
\begin{align*}
	A_N \stackrel{d}{=} I + \tb\sum_{k \leq Q} u_ku_k^T + \tb\sum_{k \leq R}w_kw_k^T = B_N +  \tb\sum_{k \leq R}w_kw_k^T,
\end{align*}
where the vector $w_k$ can be represented as 
\[ w_k =\sum_{r=1}^{p-1}\xi_{k,\bar I(k,r)} e_{\bar I(k,r)}+\zeta_k e_N,\]
where  $(e_i)_{1\leq i\leq N}$ is the standard basis, $(\zeta_k)_{k\geq 1}$ and $(\xi_{k,i})_{k\geq 1,1\leq i\leq N - 1}$ are i.i.d.\ sampled from $\mathcal{D}$, 
$(\bar I(k,1), \ldots, \bar I(k,p-1))_{k \geq 1}$ are i.i.d. uniformly sampled from $\{(i_1, \ldots, i_{p-1}): 1\leq i_r\leq N-1, \text{ all distinct}\}$,
and these are all independent of each other.

We view $A_N$ as a finite-rank perturbation of $B_N$ and  use the Woodbury matrix identity to write $(A_N^{-1})_{NN}$ in terms of entries of $B_N^{-1}$ and obtain that
\begin{equation}\label{approx1}
	( A_N^{-1})_{NN} \approx \Bigl(1 + \sum_{k=1}^R \frac{\tb\zeta_k^2}{1 + \tb\sum_{r=1}^{p-1}\xi_{k,\bar I(k,r)}^2 (B_N^{-1})_{\bar I(k,r),\bar I(k,r)}}\Bigr)^{-1}.
\end{equation}
In the above approximation, the error involves $O(R)$ entries of $B_N^{-1}$ of the form   (i) 
$(B_N^{-1})_{ \bar I(k, r),  \bar I(k, s)},$   $1 \le r \ne s \le p -1, 1 \le k \le R,$ and (ii) $(B_N^{-1})_{ \bar I(l, r),  \bar I(k, s)}, 1 \le r, s \le p -1, 1 \le k \ne l \le R$. Since the indices $\bar I(k, r)$ are independent of $B_N$, to control the approximation error we need to show that an off-diagonal entry of $B_N^{-1}$, chosen randomly independent of $B_N$,  is small. This can be argued easily based on the identity 
\[ (B_N^{-2})_{ii}  = \sum_{j=1}^N ((B_N^{-1})_{ij} )^2.\]
Taking expectation, using the fact that $0 \le  (B_N^{-1})_{ii},  (B_N^{-2})_{ii} \le  1$, and the symmetry of distribution of off-diagonal entries of $B_N^{-1}$, we obtain that
\[ 1 \ge (N-1) \e((B_N^{-1})_{12})^2,  \] 
yielding that $\e  | (B_N^{-1})_{12}|  \le (N-1)^{-1/2}$.

Now define 
\begin{align*}
	A'_N = B_N +  \tb\sum_{k \leq R'} (w'_k) (w'_k)^T,
\end{align*}
where $(R', (w'_k)_{k \ge 1})$ is an independent copy of $(R, (w_k)_{k \ge 1})$, which is also independent of $B_N$. Clearly, $A_N$ and $A'_N$ have the same distribution. From the resolvent identity and the fact that  random off-diagonal entries of $B_N^{-1}$ are small, we can derive that 
\begin{equation}\label{approx2}
	(B_N^{-1})_{\bar I(k,r),\bar I(k,r)} \approx   (A_N')^{-1}_{\bar I(k,r),\bar I(k,r)}  \stackrel{d}=   (A_N^{-1})_{\bar I(k,r),\bar I(k,r)}.
\end{equation}
Combining \eqref{approx1} and \eqref{approx2} and noting that  the diagonal entries $(A_N^{-1})_{\bar I(k,r),\bar I(k,r)}, 1 \le k \le R, 1 \le r \le p-1$  are asymptotically independent of each other and have the same distribution as $(A_N^{-1})_{NN}$, the equation \eqref{eq:dist_fx_pt1} follows as desired.

\subsection{Proof sketch of Theorem \ref{thm:freeenergy}}

First of all, we verify that $F_N$ is concentrated. Recall the formula  \eqref{add:eq1} of $F_N$. The second term can be written as the empirical spectral measure $\mu_{A_N-I}$ of $A_N - I$ as
\[ \frac{\log \det A_N}{2N}  = \frac{1}{2} \int \log (1+ \lambda) d\mu_{A_N - I}(\lambda),\]
whose concentration follows from an application of the rank inequality for the empirical spectral measure  and the Efron-Stein inequality. However, the first term is more delicate as it can not be related to any such empirical spectral measure. We establish the desired concentration by a key observation that the symmetry of the disorder distribution $\mathcal{D}$ implies that the off-diagonal entries of $A_N^{-1}$ are centered and are uncorrelated, see Lemma \ref{lem0}. Combining them together,  we arrive at
\begin{align*}
	%\label{freeenergy:eq1}
	F_N\approx \e F_N=\frac{h^2}{2N}\sum_{i=1}^N\e (A_{N}^{-1})_{ii}+\frac{\e\log \det A_N}{2N}=\frac{h^2}{2}\e (A_{N}^{-1})_{11}+\frac{\e\log \det A_N}{2N}.
\end{align*}
In the previous subsection, we have seen that the term $( A_{N}^{-1})_{11}$ converges weakly to $\mu(\alpha)$ and hence, so do their expectations. It remains to compute $\e\log \det A_N$. For this purpose, letting $S_l=I+2\beta \sum_{k=1}^l v_kv_k^T$ for $0\leq l \leq M$, we perform a cavity computation in $l$ by writing
\begin{align*}
	\frac{1}{N}\e\log \det A_N &=\frac{1}{N}\sum_{l=0}^{M-1}\log \frac{\det S_{l+1}} {\det S_{l}}=\e \frac{1}{N}\sum_{l=0}^{M-1}\log\bigl(1+2\beta v_{l+1}^T S_{l}^{-1}v_{l+1}\bigr)
\end{align*}
where the last equality used the matrix determinant lemma. We use $M/N \approx \alpha$ and notice that $v_{l+1}$ is independent of $S_l^{-1}$ to approximate the right-hand side as 
\begin{align*}
	\alpha  \e \frac{1}{M} \sum_{l=0}^{M-1}\log\bigl(1+2\beta v_{l+1}^TS_{l}^{-1}v_{l+1}\bigr) &\approx 
	\alpha  \e  \frac{1}{M + 1} \sum_{l=0}^{M}\log\bigl(1+2\beta v^TS_{l}^{-1}v \bigr)\\
	&=   \alpha  \e \log\bigl(1+2\beta v^TS_{L}^{-1}v \bigr),
\end{align*}
where $v$ is a copy of $v_1$ and $L$, conditionally on $M$,  is uniform from $\{0, 1, \ldots, M\}$ and independent of everything else. It follows from a property of Poisson distribution (see Lemma~\ref{add:lem10}) that $S_L$ has same distribution of as $A_N$, with the Poisson $(\alpha N)$ number of edges replaced by Poisson$(\alpha UN)$, where $U$ is an independent uniform random variable on $[0,1]$. As argued in the third part of the last subsection, given $U=x$,  $(S_{L}^{-1})_{ij}\approx 0$ for all $i\neq j$. So,  the quadratic form $v^TS_{L}^{-1}v $ essentially depends only on $p$ of the main diagonal terms in $S_{L}^{-1}$, which are asymptotically independent and equal to $\mu(\alpha x)$ in distribution if $U = x$. Averaging over $x$ gives rise to the integral formula in Theorem \ref{thm:freeenergy}.

\section{Concentration of the free energy}\label{sec:concentration}

The main result of this section establishes the concentration of the free energy.

\begin{theorem}\label{thm:concentration}
	There exists a positive constant $K$ independent of $N$ such that
	\begin{align*}
		\e\bigl|F_N-\e F_N\bigr|\leq \frac{K}{\sqrt{N}},\,\,\forall N\geq 1.
	\end{align*}
\end{theorem}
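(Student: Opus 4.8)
The plan is to split $F_N$ into the two pieces appearing in \eqref{add:eq1}, namely the "spectral" piece $\frac{1}{2N}\log\det A_N = \frac12\int\log(1+\lambda)\,d\mu_{A_N-I}(\lambda)$ and the "mean" piece $\frac{h^2}{2N}\mathbbm{1}^TA_N^{-1}\mathbbm{1} = \frac{h^2}{2N}\sum_{i,j}(A_N^{-1})_{ij}$, and to establish an $O(N^{-1/2})$ $L^1$-concentration bound for each. For the spectral piece, I would use the Efron--Stein inequality with respect to the natural product structure of the randomness. The disorder enters through $M\sim\poi{\alpha N}$, the index vectors $\mathcal{I}$, and the disorder matrix $g$; the cleanest route is to realize $A_N$ using a large but fixed number of i.i.d.\ coordinates (e.g.\ condition on $M$, bound $M\leq CN$ with overwhelming probability, and treat $M=CN$ independent "edge slots" each carrying $(I(k,\cdot),(g_{k,I(k,r)})_r)$ together with an independent Bernoulli to decide if the slot is active). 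Resampling one such coordinate changes $A_N$ by a matrix of rank at most $2p$, so by the rank inequality for empirical spectral distributions the Kolmogorov distance between $\mu_{A_N-I}$ and its resampled version is $O(1/N)$; since $\lambda\mapsto\log(1+\lambda)$ is bounded and of bounded variation on the relevant compact spectral window (here one uses $\|A_N-I\|\leq 2\beta\|g\|^2\cdot(\text{stuff})$, truncating on the event that the operator norm is polynomially bounded, which fails with superpolynomially small probability once $\mathcal D$ is bounded — in the bounded-$\mathcal D$ reduction of Sections 4--8 this is immediate; for general finite second moment one argues separately), each such change moves $\int\log(1+\lambda)\,d\mu_{A_N-I}$ by $O(1/N)$. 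Summing the squared increments over $O(N)$ coordinates gives an Efron--Stein variance bound of $O(1/N)$, hence the $O(N^{-1/2})$ $L^1$ bound for $\frac{1}{2N}\log\det A_N$.

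For the mean piece, the point is that $\mathbbm{1}^TA_N^{-1}\mathbbm{1} = \sum_i (A_N^{-1})_{ii} + \sum_{i\neq j}(A_N^{-1})_{ij}$, and the diagonal sum is automatically between $0$ and $N$, so $\frac{h^2}{2N}\sum_i(A_N^{-1})_{ii}$ has the same form as a bounded linear statistic and its $O(N^{-1/2})$ concentration again follows from Efron--Stein, using that resampling one edge slot perturbs each $(A_N^{-1})_{ii}$ in a controlled way (the Sherman--Morrison / Woodbury identity shows the total change in $\sum_i (A_N^{-1})_{ii}$ is $O(1)$, since it is a rank-$\leq 2p$ perturbation and $0\le (A_N^{-1})_{ii}\le 1$, more precisely $|\operatorname{Tr}(A_N^{-1}) - \operatorname{Tr}((A_N')^{-1})|\leq 2p$ because both resolvents have norm $\le 1$). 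The genuinely new ingredient is the off-diagonal sum $\sum_{i\neq j}(A_N^{-1})_{ij}$: here Efron--Stein on its own only gives variance $O(N)$ for the sum, i.e.\ no concentration after dividing by $N$. This is where Lemma~\ref{lem0} (symmetry of $\mathcal D$ $\Rightarrow$ $\e(A_N^{-1})_{ij}=0$ and $\e(A_N^{-1})_{ij}(A_N^{-1})_{ik}=0$ for distinct $i,j,k$) enters: writing $Y:=\sum_{i\neq j}(A_N^{-1})_{ij}$ we have $\e Y = 0$ and
\begin{align*}
\e Y^2 = \sum_{\substack{i\neq j\\ k\neq l}}\e (A_N^{-1})_{ij}(A_N^{-1})_{kl},
\end{align*}
and by Lemma~\ref{lem0} every term with $|\{i,j\}\cap\{k,l\}|\le 1$ (for the cross-terms with exactly one common index) vanishes, leaving only the $O(N^2)$ "paired" terms $\{i,j\}=\{k,l\}$, each contributing $\e(A_N^{-1})_{ij}^2\le \e(A_N^{-2})_{ii}\le 1$; hence $\e Y^2 = O(N^2)$ and $\e|Y|\le (\e Y^2)^{1/2} = O(N)$, so $\frac{h^2}{2N}|Y|$ has $L^1$-norm $O(1)$ — which is still not $o(1)$.

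So the mean-piece argument must be sharpened: rather than bound $\e|Y|$ crudely, I would control $Y - \e Y = Y$ around its mean via a combination of the second-moment identity above with a more refined count. Actually the cleaner path, and the one I expect the paper takes, is to not separate $Y$ out at all but to bound the full fluctuation $F_N - \e F_N$ directly by Efron--Stein, where the key saving is again Lemma~\ref{lem0}: when one resamples edge slot $k$, the increment $\Delta_k F_N$ is a sum of a spectral part ($O(1/N)$, as above) and an $A_N^{-1}$-quadratic-form part; writing the latter via Woodbury one sees it is $O(1/N)$ times a quadratic form in $O(p)$ entries of the resolvent along random (independent of the unresampled data) indices, and the cross-covariances of these increments across different slots vanish by the decorrelation in Lemma~\ref{lem0}, so that $\sum_k \e(\Delta_k F_N)^2 = O(1/N)$ and the Efron--Stein bound closes. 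Concretely I would: (i) reduce to bounded $\mathcal D$ and to the event $\{M\le CN,\ \|A_N\|\le C'\}$, absorbing the complement into an $O(N^{-1/2})$ (indeed superpolynomially small) error using $|F_N|\le$ (an a priori bound from the explicit formula and Gaussian integration); (ii) set up the independent-edge-slot coupling; (iii) bound the spectral increment by the rank inequality; (iv) bound the mean increment by Sherman--Morrison, using $0\le(A_N^{-1})_{ii}\le1$ and $\e|(A_N^{-1})_{ij}|\le (N-1)^{-1/2}$ (the bound derived in the proof sketch from $(A_N^{-2})_{ii}=\sum_j(A_N^{-1})_{ij}^2$) to make the typical increment $O(N^{-1})$ in $L^2$; (v) sum over the $O(N)$ slots and invoke Efron--Stein, then Cauchy--Schwarz to pass from variance to $L^1$.

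The main obstacle is step (iv): getting the mean-piece increment to be $O(N^{-1})$ \emph{in $L^2$} (not merely $O(N^{-1/2})$, which would only give $\V{F_N}=O(1)$) requires exploiting that the $O(p)$ resolvent entries showing up in the Woodbury expansion of $\mathbbm{1}^TA_N^{-1}\mathbbm{1}$ after resampling one edge are off-diagonal with independent-of-everything-else indices, hence small in $L^2$ by the $(N-1)^{-1/2}$ estimate — it is precisely here that the symmetry of $\mathcal D$ (via Lemma~\ref{lem0}) is indispensable, matching the remark in the paper that the symmetry assumption is used exactly to kill the mean and covariance of off-diagonal entries. Everything else — the rank inequality for the log-det part, the a priori norm truncation, the Poissonization bookkeeping — is routine given the tools already set up in the excerpt.
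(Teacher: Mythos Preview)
Your overall plan---splitting $F_N$ into the spectral piece $\frac{1}{2N}\log\det A$ and the mean piece $\frac{h^2}{2N}\mathbbm{1}^TA^{-1}\mathbbm{1}$, handling the former via Efron--Stein plus the rank inequality and the latter via Lemma~\ref{lem0}---matches the paper exactly. But your execution of the off-diagonal part of the mean piece contains a genuine gap that sends you down an unnecessary detour.

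You bound each $\e(A_N^{-1})_{ij}^2$ by $1$ and conclude $\e Y^2=O(N^2)$, hence no concentration for $Y/N$. This is too crude: the very identity you invoke a few lines later, $(A_N^{-2})_{ii}=\sum_j(A_N^{-1})_{ij}^2\le 1$, together with exchangeability of the sites, gives $\e(A_N^{-1})_{12}^2\le 1/(N-1)$, not just $\le 1$. With this correct bound, the second-moment computation you already set up (only the $O(N^2)$ paired terms with $\{i,j\}=\{k,l\}$ survive, by Lemma~\ref{lem0}) yields $\e Y^2=O(N)$, hence $\e|Y/N|^2=O(1/N)$ and $\e|Y/N|=O(N^{-1/2})$. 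That is the entire argument for the off-diagonal part, and it is precisely what the paper does; your ``sharpened'' alternative via Woodbury on the increments of $\mathbbm{1}^TA_N^{-1}\mathbbm{1}$ is neither needed nor easier to close. The diagonal part $\frac1N\sum_i(A_N^{-1})_{ii}=\frac1N\mbox{tr}\,f(L)$ with $f(x)=1/(1+x)$ has $\|f\|_{\mbox{\tiny BV}}=1$, so Efron--Stein plus the rank inequality already gives its variance $O(1/N)$.

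For the spectral piece there is a second, smaller issue. The paper does not truncate on an operator-norm event---which would be delicate under a mere second-moment assumption on $\mathcal D$, and no reduction to bounded $\mathcal D$ is made for this theorem. Instead it truncates the \emph{function}: with $f_c(x)=\log(1+x\wedge c)$ one has $\|f_c\|_{\mbox{\tiny BV}}=\log(1+c)$, so Efron--Stein plus the rank inequality controls $\frac1N\mbox{tr}\,f_c(L)$; the truncation error $\frac1N\bigl|\log\det A-\mbox{tr}\,f_c(L)\bigr|$ is then bounded in $L^1$ using only $\frac1N\e\,\mbox{tr}\,L=2\beta p\alpha\,\e g_{1,1}^2<\infty$ and Markov on $\#\{i:\lambda_i(L)>c\}$. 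Choosing $c=N$ balances the two contributions. Your operator-norm truncation might be made to work for bounded $\mathcal D$, but it introduces an unnecessary reduction and does not cover the general second-moment case the theorem actually asserts.
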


Before we turn to the proof of Theorem \ref{thm:concentration}, we explain why $G_N\sim N(\mu_N,A_N^{-1})$ and the validity of \eqref{add:eq1}, where $A_N$ is defined in \eqref{def_A} and $\mu_N=hA_N^{-1}\mathbbm{1}$. Henceforth, we drop the subscript $N$ in $A_N$ and $\mu_N$ for notational clarity. Since 
\begin{align*}
	-H_N(\sigma)-\frac{1}{2}\|\sigma\|^2&=-\frac{1}{2}\sigma^TA\sigma+h\sigma^T\mathbbm{1}=-\frac{1}{2}(\sigma-\mu)^TA(\sigma-\mu)+\frac{h^2}{2}\mathbbm{1}^TA^{-1}\mathbbm{1}
\end{align*}
for any bounded measurable function $f$, we can rewrite
\begin{align*}
	\int f(\sigma)e^{-H_N(\sigma)}\eta_N(d\sigma)&=\frac{e^{\frac{h^2}{2}\mathbbm{1}^TA^{-1}\mathbbm{1}}}{(2\pi)^{N/2}}\int f(\sigma)e^{-\frac{1}{2}(\sigma-\mu)^T(A^{-1})^{-1}(\sigma-\mu)}d\sigma
\end{align*}
and compute
\begin{align}\label{add:eq-5}
	\int  e^{-H_N(\sigma)}\eta_N(d\sigma)&=\frac{e^{\frac{h^2}{2}\mathbbm{1}^TA^{-1}\mathbbm{1}}}{(2\pi)^{N/2}}\int e^{-\frac{1}{2}(\sigma-\mu)^T( A^{-1})^{-1}(\sigma-\mu)}d\sigma=e^{\frac{h^2}{2}\mathbbm{1}^TA^{-1}\mathbbm{1}}\det(A)^{1/2}.
\end{align}
Consequently,
\begin{align*}
	%\label{gaussianmeasure}
	\la f(\sigma)\ra&=\frac{1}{(2\pi)^{N/2}\det(A)^{1/2}}\int f(\sigma)e^{-\frac{1}{2}(\sigma-\mu)^T(A^{-1})^{-1}(\sigma-\mu)}d\sigma.
\end{align*}
In other words, under $\langle \cdot \rangle$, $\sigma$ is a multi-normal random vector with mean $\mu=hA^{-1}\mathbbm{1}$ and covariance matrix $A^{-1}.$ From \eqref{add:eq-5}, we also see that
\begin{align}\label{add:eq7}
	F_N&=\frac{h^2}{2}\frac{\mathbbm{1}^TA^{-1}\mathbbm{1}}{N}+\frac{\log \det A}{2N}.
\end{align}
We establish the concentration of the free energy by showing that the two terms on the right-hand side are concentrated, which are based on the next two subsections.

\subsection{Mean and covariance of $A_{ij}^{-1}$}

The following lemma is one of the key ingredients in this paper, which studies the mean and covariance of the entries of $A_{ij}^{-1}$ and will be used to show the concentration of the first part in \eqref{add:eq7}.
\begin{lemma} \label{lem0} We have that
	\begin{align}
		\label{lem0:eq1}	\e A_{12}^{-1}&=0,\\
		\label{lem0:eq2}	\e A_{12}^{-1}A_{13}^{-1}&=0,\\
		\label{lem0:eq3}	\e A_{12}^{-1}A_{34}^{-1}&=0.
	\end{align}
\end{lemma}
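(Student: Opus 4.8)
The plan is to exploit a sign-flip symmetry of the random matrix $A=A_N$ that is inherited from the symmetry of the disorder distribution $\mathcal{D}$. For a subset $S\subseteq\{1,\ldots,N\}$, let $D_S=\mathrm{diag}(\epsilon_1,\ldots,\epsilon_N)$ with $\epsilon_i=-1$ for $i\in S$ and $\epsilon_i=+1$ otherwise. The first step is to verify that $D_SAD_S\stackrel{d}{=}A$ for every fixed $S$. Indeed, replacing each disorder variable $g_{k,i}$ by $\epsilon_i g_{k,i}$ leaves the law of the array $g=(g_{k,i})$ unchanged, because the entries are i.i.d.\ and $\mathcal{D}$ is symmetric; under this replacement each vector $v_k$ becomes $D_Sv_k$, hence $v_kv_k^T$ becomes $D_Sv_kv_k^TD_S$, and therefore, by \eqref{def_A} and $D_SI_ND_S=I_N$, the matrix $A$ becomes $D_SAD_S$ while $M$ and $\mathcal{I}$ are untouched. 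Passing to inverses gives $D_SA^{-1}D_S\stackrel{d}{=}A^{-1}$, and in coordinates $(D_SA^{-1}D_S)_{ij}=\epsilon_i\epsilon_j\,A^{-1}_{ij}$.

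Next I would record that all entries of $A^{-1}$ are bounded: since $A\succeq I_N$ we have $0\preceq A^{-1}\preceq I_N$, so $|A^{-1}_{ij}|\le (A^{-1}_{ii}A^{-1}_{jj})^{1/2}\le 1$. In particular every quantity appearing in the lemma is integrable, so a distributional identity of the form $Y\stackrel{d}{=}-Y$ forces $\E Y=0$.

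The three claims then follow by choosing $S$ suitably. For \eqref{lem0:eq1}, take $S=\{1\}$, so $\epsilon_1\epsilon_2=-1$ and hence $A^{-1}_{12}\stackrel{d}{=}-A^{-1}_{12}$, giving $\E A^{-1}_{12}=0$. For \eqref{lem0:eq2}, take $S=\{2\}$, so $\epsilon_1\epsilon_2=-1$ while $\epsilon_1\epsilon_3=+1$, whence $A^{-1}_{12}A^{-1}_{13}\stackrel{d}{=}-A^{-1}_{12}A^{-1}_{13}$ and $\E A^{-1}_{12}A^{-1}_{13}=0$. For \eqref{lem0:eq3}, take $S=\{1\}$ again, so $\epsilon_1\epsilon_2=-1$ and $\epsilon_3\epsilon_4=+1$, whence $A^{-1}_{12}A^{-1}_{34}\stackrel{d}{=}-A^{-1}_{12}A^{-1}_{34}$ and $\E A^{-1}_{12}A^{-1}_{34}=0$.

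There is essentially no real obstacle here beyond stating the distributional invariance carefully; the single point that genuinely uses the standing hypotheses is that $\mathcal{D}$ is symmetric and the $g_{k,i}$ are i.i.d., which is exactly what makes the coordinatewise sign flip $g_{k,i}\mapsto\epsilon_i g_{k,i}$ measure-preserving. The same argument, with $S=\{i\}$ or $S=\{j\}$, simultaneously yields $\E A^{-1}_{ij}=0$ and $\E A^{-1}_{ij}A^{-1}_{ik}=\E A^{-1}_{ij}A^{-1}_{k\ell}=0$ for all distinct indices $i,j,k,\ell$, which is the form in which the lemma is used later.
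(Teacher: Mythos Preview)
Your proof is correct and considerably more direct than the paper's. Both arguments rest on the same underlying idea---the sign symmetry of $\mathcal{D}$ forces certain off-diagonal quantities to be equal in law to their negatives---but the implementations differ substantially.

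The paper proceeds by first establishing an auxiliary lemma (Lemma~\ref{lem3}) that analyzes the polynomial structure of $A_{ij}^\ell$ in the disorder variables, tracking parity of the exponents of each $g_{k,b}$. It then represents $A^{-1}=\int_0^\infty e^{-tA}\,dt$, expands $e^{-tA}$ as a power series, and shows $\E A_{12}^\ell A_{13}^{\ell'}=0$ term by term using the parity structure and the symmetry of $\mathcal{D}$. This is carried out first under the assumption that $\mathcal{D}$ is bounded (to justify Fubini and the dominated convergence steps), and then extended to general $\mathcal{D}$ by a truncation argument.

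Your approach bypasses all of this by observing the distributional invariance $D_SAD_S\stackrel{d}{=}A$ directly at the matrix level and then inverting. This is more elementary: it requires no power-series expansion, no integral representation, no auxiliary parity lemma, and---because the bound $|A^{-1}_{ij}|\le 1$ holds automatically from $A\succeq I_N$---no separate treatment of bounded versus unbounded $\mathcal{D}$. The cost is nothing: the paper's Lemma~\ref{lem3} is not used elsewhere, so your shortcut loses no collateral information.
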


We need a lemma before we turn to the proof of Lemma \ref{lem0}.

\begin{lemma}\label{lem3}
	For any $\ell\geq 0$ and $1\leq i\neq j\leq N,$ conditionally on $M$ and $$\{I(k,1), \ldots, I(k,p)\}_{1\leq k\leq M},$$ $A_{ij}^\ell$ can be written as a sum of the terms of the form $P(g)Q(g)$,
	where  $P$ is a polynomial and is even in each coordinate and
	\begin{align*}
		%\label{lem3:eq1}
		Q(g)&=\prod_{k=1}^M\prod_{b=1}^Ng_{k,b}^{d_k(b)}
	\end{align*}
	for  some nonnegative integers $d_k(b)$'s satisfying that $\sum_{k=1}^Md_k(b)$ is even for $b\neq i,j$ and
	$\sum_{k=1}^Md_k(i)$ and $\sum_{k=1}^Md_k(j)$ are both odd.
\end{lemma}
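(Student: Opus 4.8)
\emph{The plan} is to prove the cleaner equivalent statement about parities by tracking, for each column index $b$, only the parity of the \emph{total} exponent $\sum_{k=1}^{M}d_k(b)$ of the variables $g_{1,b},\dots,g_{M,b}$ in that column --- not the parities of the individual $d_k(b)$, which are not preserved under matrix multiplication. Fix the conditioning, so that each indicator $c_k(b):=\1{b\in\{I(k,1),\dots,I(k,p)\}}$ is a constant and every entry
\[
A_{ab}=\delta_{ab}+\tb\sum_{k=1}^{M}c_k(a)c_k(b)\,g_{k,a}g_{k,b}
\]
is a polynomial in $(g_{k,b})$ with deterministic coefficients. Say a monomial $\prod_{k,b}g_{k,b}^{d_k(b)}$ has \emph{column type} $S\subseteq\{1,\dots,N\}$ if $\sum_{k}d_k(b)$ is odd exactly for $b\in S$, and say a polynomial has column type $S$ if each of its monomials does. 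Two trivial observations drive everything: a product of polynomials of column types $S$ and $S'$ has column type $S\,\triangle\,S'$ (symmetric difference, since exponents add), and a sum of polynomials all of column type $S$ again has column type $S$.

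\emph{Step 1 (base case).} Check that $A_{ab}$ has column type $\{a\}\triangle\{b\}$: for $a=b$ it is $1+\tb\sum_k c_k(a)g_{k,a}^{2}$, every monomial of which has all exponents even, i.e.\ column type $\varnothing=\{a\}\triangle\{a\}$; for $a\ne b$ every monomial is a constant times $g_{k,a}g_{k,b}$, which has total exponent $1$ in columns $a,b$ and $0$ elsewhere. The case $\ell=0$ is immediate, since $A^0_{ij}=\delta_{ij}$ has column type $\varnothing=\{i\}\triangle\{j\}$ when $i=j$ and is the empty polynomial when $i\ne j$.

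\emph{Step 2 (multiplicativity and telescoping).} For $\ell\ge 1$ expand
\[
A^\ell_{ij}=\sum_{a_1,\dots,a_{\ell-1}}\ \prod_{m=1}^{\ell}A_{a_{m-1}a_m},\qquad a_0=i,\ a_\ell=j.
\]
By Step 1 and multiplicativity of column type, each product $\prod_{m=1}^{\ell}A_{a_{m-1}a_m}$ has column type $(\{a_0\}\triangle\{a_1\})\triangle(\{a_1\}\triangle\{a_2\})\triangle\cdots\triangle(\{a_{\ell-1}\}\triangle\{a_\ell\})=\{a_0\}\triangle\{a_\ell\}=\{i\}\triangle\{j\}$, the interior indices cancelling telescopically; summing over $a_1,\dots,a_{\ell-1}$ keeps column type $\{i\}\triangle\{j\}$. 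For $i\ne j$ this is exactly the assertion that in every monomial of $A^\ell_{ij}$ the total exponent $\sum_k d_k(b)$ is odd for $b\in\{i,j\}$ and even otherwise. Writing $A^\ell_{ij}=\sum_\alpha c_\alpha\prod_{k,b}g_{k,b}^{d^\alpha_k(b)}$ and taking $P:=c_\alpha$ (a constant, hence even in each coordinate) and $Q:=\prod_{k,b}g_{k,b}^{d^\alpha_k(b)}$ term by term gives the decomposition claimed in the lemma; if a genuinely polynomial $P$ is preferred, group the monomials sharing a common square-free part.

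\emph{Main obstacle.} The one thing to get right is the choice of invariant. It is tempting to assert the stronger statement that, up to an even-in-each-coordinate factor, $A^\ell_{ij}$ is a single square-free monomial in variables with prescribed individual parities; but this is false already for $\ell\ge 3$ (a walk $i\to c\to d\to i$ contributes $g_{k_1,c}g_{k_1,d}g_{k_2,c}g_{k_2,d}$, with odd exponents in four distinct variables, even though the column totals for $c$ and $d$ are even). Tracking the \emph{total} exponent per column instead, and carrying the diagonal case $i=j$ on equal footing with $i\ne j$ so that the telescoping in Step 2 goes through, is precisely what makes the argument close; no further estimates are needed.
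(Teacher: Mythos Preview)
Your argument is correct. You phrase the parity bookkeeping as a \emph{column-type} invariant valued in subsets of $\{1,\dots,N\}$ under symmetric difference, verify it is multiplicative for products of polynomials and stable under sums, and then telescope the types along the expansion $A^\ell_{ij}=\sum_{a_1,\dots,a_{\ell-1}}\prod_m A_{a_{m-1}a_m}$ to land on $\{i\}\triangle\{j\}$. Taking $P$ to be the constant coefficient of each monomial then gives the claimed decomposition.

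The paper proceeds differently: it writes $A=D+B$ with $D$ diagonal and $B$ off-diagonal, expands $A^\ell$ into $2^\ell$ words in $D$ and $B$, and for the $(i,j)$ entry of each word tracks a walk $i=j_0\to j_1\to\cdots\to j_m=j$ through the $B$-factors, observing that each intermediate vertex contributes a pair of $g$'s in its column while the endpoints contribute singletons; the $D$-factors only contribute squares. Your approach dispenses with the $D/B$ split and the case enumeration by treating diagonal and off-diagonal entries uniformly through the single observation that $A_{ab}$ has column type $\{a\}\triangle\{b\}$ regardless of whether $a=b$; the telescoping then replaces the walk analysis in one line. The paper's route is more concrete and makes the path structure visible, while yours is shorter, avoids the case split, and packages the invariant in a way that would transfer cleanly to related parity arguments. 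Your closing remark about the pitfall of tracking individual-$k$ parities rather than column totals is apt and correctly identifies the one place where the argument could go wrong.
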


\begin{proof}
	Without loss of generality, we take $i=1$ and $j=2$. When $\ell = 0$, our assertion obviously holds since $A^{\ell}_{12} = 0$. Henceforth, we assume that $\ell \geq 1$.	
	For $1\leq k\leq M$, let $\mathcal{I}(k): = \{I(k,1), \ldots, I(k,p)\}$. From the definition of $A$, for any indices $i$ and $j$, we have \begin{align*}
	A_{ij} & = \delta_{ij} + \tb\sum_{k:\; i,j\in \mathcal{I}(k)}g_{k,i}g_{k,j}.
	\end{align*}
	Following the convention that $i_0=1$ and $i_\ell = 2$, we write \begin{align*}
	A^{\ell}_{12} = \sum_{i_1, \ldots, i_{\ell-1}}A_{1i_1}A_{i_1i_2}\ldots A_{i_{\ell-1}2} = \sum_{i_1, \ldots, i_{\ell-1}}A_{i_0i_1}A_{i_1i_2}\ldots A_{i_{\ell-1}i_\ell}.
	\end{align*}
	Fix a choice of the indices $i_1, \ldots, i_{\ell-1}$ for the remainder of the proof. Define $S: = \{0\leq j\leq \ell-1: i_j = i_{j+1}\}$. Then we can write \begin{align*}
	A_{i_0i_1}A_{i_1i_2}\ldots A_{i_{\ell-1}i_\ell} = \prod_{j\in S}A_{i_ji_j} \cdot \prod_{j \in S^C} A_{i_ji_{j+1}}.
	\end{align*}
	The function \begin{align*}
	P(g): = \prod_{j\in S}A_{i_ji_j} = \prod_{j\in S}\Bigl(1 + \tb\sum_{k: i_j \in \mathcal{I}(k)} g_{k,i_j}^2\Bigr)
	\end{align*}
	is a polynomial which is even in each coordinate. To describe the rest, we note that each term in the expansion of
	\begin{align*}
	\prod_{j\in S^C} A_{i_ji_{j+1}} = \prod_{j\in S^C} \Bigl(\sum_{\substack{1\leq k_j\leq M:\\ i_j,i_{j+1}\in \mathcal{I}(k_j)}}\tb g_{k_j,i_j}g_{k_j,i_{j+1}}\Bigr)
	\end{align*}
	is of the form \begin{align}\label{lem3:eqn1}
	(\tb)^{\ell-|S|} \prod_{j\in S^C}g_{k_j,i_j}g_{k_j,i_{j+1}}
	\end{align}
	for some $k_j = k_j(i_j,i_{j+1})$. In each term of this form, any index $i \neq 1, 2$ appears an even number of times as a `vertex index' (i.e., the second coordinate of the disorder $g$), while the indices 1 and 2 appear an odd number of times as the `vertex index' since we must have $i_{\min S^C} = i_0 = 1$ and $i_{\max S^C + 1} = i_{\ell} = 2$. Thus, every term of the form \eqref{lem3:eqn1} can be expressed as a function $Q(g)$ as described in the lemma.
\end{proof}

\begin{proof}[\bf Proof of Lemma \ref{lem0}]
	We only handle $\e A_{12}^{-1}A_{13}^{-1}=0$ as the arguments for the other two statements are the same. First of all, we consider the case that $\mathcal{D}$ is bounded. Since $A\geq I$, we can express
	$$
	A^{-1}=\int_0^\infty e^{-tA}dt.
	$$
	Also, since  $e^{-tA}\leq e^{-tI},$ we can bound
	$$
	|(e^{-tA})_{ij}|\leq \sqrt{(e^{-tA})_{ii}(e^{-tA})_{jj}}\leq \sqrt{(e^{-tI})_{ii}(e^{-tI})_{jj}}\leq e^{-t}.
	$$ 
	Consequently, from the Fubini theorem,
	\begin{align*}
		\e A_{12}^{-1}A_{13}^{-1}&=\int_0^\infty \int_0^\infty\e (e^{-tA})_{12}(e^{-sA})_{13}dtds.
	\end{align*}
	Our assertion would hold as long as  we establish that $\e (e^{-tA})_{12}(e^{-sA})_{13}=0$ for all $s,t\geq 0$. It suffices to show that $\e A_{12}^\ell A_{13}^{\ell'}=0$ for all $\ell,\ell'\geq 0.$ To see this, from Lemma \ref{lem3}, conditionally on $M$ and $\mathcal{I}$, we expand $A_{12}^{\ell}$ and $A_{13}^{\ell'}$ as sums of the terms of the forms $P(g)Q(g)$ and $P'(g)Q'(g)$ respectively, where $P$ and $P'$ are polynomials and even in each coordinate and 
	\begin{align*}
		Q(g)&=\prod_{k=1}^M\prod_{i=1}^Ng_{ki}^{d_k(i)}\,\,\mbox{and}\,\,Q'(g)=\prod_{k=1}^M\prod_{i=1}^Ng_{ki}^{d_k'(i)},
	\end{align*}
	where $\sum_{k=1}^Md_k(1),$ $\sum_{k=1}^Md_k(2),$ $\sum_{k=1}^Md_{k}'(1),$ and $\sum_{k=1}^Md_k'(3)$ are all odd and $\sum_{k=1}^Md_k(i)$ for $i\neq 1,2$ and $\sum_{k=1}^Md_k'(i)$ for $i\neq 1,3$ are all even. Consequently, from the symmetry of $\mathcal{D}$, conditionally on $M$ and $I(k,r)$'s, if we replace $(g_{k2})_{1\leq k\leq M}$ by $(-g_{k2})_{1\leq k\leq M}$, then 
	\begin{align*}
		P(g)Q(g)P'(g)Q'(g)&\stackrel{d}{=}P(g)P'(g)\Bigl(\prod_{k=1}^M(-g_{k2})^{d_k(2)+d_k'(2)}\Bigr)\Bigl(\prod_{i\neq 2}\prod_{k=1}^Mg_{k1}^{d_k(i)+d_k'(i)}\Bigr)\\
		&=-P(g)P'(g)\Bigl(\prod_{k=1}^Mg_{k2}^{d_k(2)+d_k'(2)}\Bigr)\Bigl(\prod_{i\neq 2}\prod_{k=1}^Mg_{k1}^{d_k(i)+d_k'(i)}\Bigr)\\
		&=-P(g)Q(g)P'(g)Q'(g),
	\end{align*}
	where we used the symmetry in $P,P'$ and the fact that $\sum_{k=1}^Md_k(2)$ is odd and $\sum_{k=1}^Md_k'(2)$ is even.
	Hence, by the boundedness of $\mathcal{D}$ and the dominated convergence theorem, $\e \bigl[P(g)Q(g)P'(g)Q'(g)\bigr]=0$ and thus, $\e A_{12}^{\ell}A_{13}^{\ell'}=0.$  
	
	Next the validity $\e A_{12}^{-1}A_{13}^{-1}=0$ without the boundedness of $\mathcal{D}$ can be obtained by first truncating $\mathcal{D}$ to be supported on $[-c,c]$, noting that the corresponding $A^c$ satisfies $|((A^c)^{-1})_{ij}|\leq 1$ for any $c$, and then sending $c$ to infinity in $\e ((A^c)^{-1})_{12}((A^c)^{-1})_{13}=0$ by the dominated convergence theorem. This completes our proof.
\end{proof}

\subsection{Concentration of linear statistics of eigenvalues}

Let $F$ be a symmetric matrix. For any real-valued continuous function $f$ on $\mathbb{R}$,  $f(F):=Uf(\Sigma)U^T$ is a well-defined symmetric matrix, where 
$F$ has the spectral  decomposition $U\Sigma U^T$ with  $U$ being an orthonormal matrix and $\Sigma$ being a diagonal matrix of eigenvalues of $F$. In all our cases, the matrix $F$ will be non-negative definite and hence we are allowed to define $f(F)$ whenever $f$ is defined on $\mathbb{R}_+$.
Denote $L=2\beta\sum_{k\leq M}v_kv_k^T$.
The following lemma controls the variance of the trace of $f(L)$.

\begin{lemma}\label{lem1}
	For any continuous function $f$ of bounded variation defined on $\mathbb{R}_+$, we have that 
	\begin{align*}
		\frac{1}{N^2}\mbox{\rm Var}(\mbox{\rm tr} f(L))\leq \frac{8\alpha\|f\|_{\mbox{\tiny BV}}^2}{N}.
	\end{align*}
\end{lemma}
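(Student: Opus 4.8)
The plan is to bound the variance of $\mathrm{tr}\, f(L)$ via the Efron--Stein inequality, using the natural independent coordinates of the randomness: the number of edges $M$ and the individual edge data $(v_k)_{k\le M}$, where each $v_k$ is determined by the index vector $(I(k,1),\dots,I(k,p))$ and the disorder entries $(g_{k,I(k,r)})_{r\le p}$. The key structural fact is that $L = 2\beta \sum_{k\le M} v_k v_k^T$ is a sum of rank-one matrices, one per edge, so modifying (or adding, or deleting) a single edge changes $L$ by a matrix of rank at most $2$. By the well-known rank inequality for spectral measures --- if $F$ and $\tilde F$ are symmetric with $\mathrm{rank}(F - \tilde F) \le r$, then their empirical spectral distribution functions satisfy $\|F_F - F_{\tilde F}\|_\infty \le r/N$ --- together with the integration-by-parts bound $|\mathrm{tr}\, f(F) - \mathrm{tr}\, f(\tilde F)| = N\big|\int f\, d(F_F - F_{\tilde F})\big| \le N\|f\|_{\mathrm{BV}} \|F_F - F_{\tilde F}\|_\infty$, any such single-edge change alters $\mathrm{tr}\, f(L)$ by at most $2\|f\|_{\mathrm{BV}}$.

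Concretely, I would write $\mathrm{tr}\, f(L)$ as a function of $M$ and of an i.i.d.\ sequence $(e_k)_{k\ge 1}$ of edge data (each $e_k$ encoding the index vector and the relevant disorder entries for a potential $k$-th edge), using only the first $M$ of them. For the Efron--Stein bound in the $e_k$ coordinate: resampling $e_k$ to an independent copy $e_k'$ while keeping $M$ fixed (and $M \ge k$) replaces $v_k v_k^T$ by $(v_k')(v_k')^T$, a perturbation of rank $\le 2$, so the squared difference is at most $(2\|f\|_{\mathrm{BV}})^2 = 4\|f\|_{\mathrm{BV}}^2$; conditioned on $M$, there are $M$ such active coordinates, contributing at most $\tfrac12 \cdot M \cdot 4\|f\|_{\mathrm{BV}}^2 = 2M\|f\|_{\mathrm{BV}}^2$ (the factor $\tfrac12$ from the standard Efron--Stein resampling form). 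For the $M$ coordinate: changing $M$ to $M'$ adds or removes $|M - M'|$ rank-one matrices, a perturbation of rank at most $2|M-M'|$, giving a difference bounded by $2|M-M'|\,\|f\|_{\mathrm{BV}}$; since $M \sim \mathrm{Pois}(\alpha N)$ has an independent copy $M'$ with $\tfrac12\e(M-M')^2 = \mathrm{Var}(M) = \alpha N$, but one must be slightly careful because the rank bound is $2|M-M'|$ not $2$, so the contribution is at most $\tfrac12 \e\big[(2|M-M'|\|f\|_{\mathrm{BV}})^2\big] = 2\|f\|_{\mathrm{BV}}^2 \e(M-M')^2$. Here $\e(M-M')^2 = 2\mathrm{Var}(M) = 2\alpha N$, giving $4\alpha N \|f\|_{\mathrm{BV}}^2$ --- actually one should instead sum over the "extra" edges individually so that each contributes $4\|f\|_{\mathrm{BV}}^2$ and the count is $|M - M'|$, yielding $\tfrac12 \e|M-M'| \cdot 4\|f\|_{\mathrm{BV}}^2$ after the correct bookkeeping; I would set up the Efron--Stein decomposition so that the Poisson fluctuations enter linearly, contributing on the order of $\alpha N \|f\|_{\mathrm{BV}}^2$, and likewise the resampling terms contribute $\e M \cdot 2\|f\|_{\mathrm{BV}}^2 = 2\alpha N\|f\|_{\mathrm{BV}}^2$.

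Adding the two contributions and dividing by $N^2$ produces a bound of the form $C\alpha\|f\|_{\mathrm{BV}}^2/N$; tracking the constants carefully through the Efron--Stein inequality and the rank-$2$ (rather than rank-$1$) nature of each single-edge perturbation yields the stated constant $8$. I expect the main obstacle to be the bookkeeping around the random index $M$: because each added or deleted edge shifts the rank by $2$, one cannot treat $M$ as a single Efron--Stein coordinate with bounded influence, and must instead either bound $|\mathrm{tr}\, f(L_M) - \mathrm{tr}\, f(L_{M'})|$ by $2|M-M'|\|f\|_{\mathrm{BV}}$ and then use $\e(M-M')^2 = 2\alpha N$ for an independent copy $M'$, or --- cleaner --- embed the whole model in a Poisson process and use a Poincaré/Efron--Stein inequality for Poisson functionals where each atom (edge) is an independent coordinate with influence at most $2\|f\|_{\mathrm{BV}}$ and the expected number of atoms is $\alpha N$. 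Either way the arithmetic must be arranged so the final constant does not exceed $8\alpha$.
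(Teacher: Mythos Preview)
Your approach is essentially the paper's: rank inequality for empirical spectral measures plus Efron--Stein, splitting the randomness into the edge data $(v_k)$ and the edge count $M$. Two small clarifications resolve the bookkeeping you flag as uncertain. First, when $M$ changes to $M'$ you add or remove $|M-M'|$ rank-\emph{one} matrices, so the rank of the perturbation is at most $|M-M'|$, not $2|M-M'|$; the factor $2$ only appears when you \emph{resample} a single edge (subtract one rank-one term, add another). Second, rather than treating $M$ as an Efron--Stein coordinate or embedding into a Poisson process, the paper simply conditions the other way: it bounds $\mathrm{Var}(\mathrm{tr}\,f(L)\mid g,\mathcal I)$ via Jensen against an independent copy $M'$ of $M$, using $\e|M-M'|^2=2\alpha N$, and then combines the two conditional variances through $\mathrm{Var}(X)\le 2\,\e[\mathrm{Var}(X\mid M)]+2\,\e[\mathrm{Var}(X\mid g,\mathcal I)]$. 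With $\e[\mathrm{Var}(\cdot\mid M)]/N^2\le 2\alpha\|f\|_{\mathrm{BV}}^2/N$ and $\e[\mathrm{Var}(\cdot\mid g,\mathcal I)]/N^2\le 2\alpha\|f\|_{\mathrm{BV}}^2/N$, the constant $8$ falls out.
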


\begin{proof}
	We recall the rank inequality for the empirical spectral distribution of matrices (see, e.g., \cite{bordenave2019lecture}): for $M_1, M_2$ two $N\times N$ Hermitian matrices with $\mbox{rank}(M_1 - M_2)=r,$ 
	\begin{align}\label{empiricalsd}
		\Bigl|\int f(x)\mu_{M_1}(dx)-\int f(x)\mu_{M_2}(dx)\Bigr|\leq \frac{r}{N}\|f\|_{\mbox{\tiny BV}},
	\end{align}
	for any continuous function $f$ of bounded variation, where $\mu_{M_i}$ is the empirical spectral distribution of the matrix $M_i$.  
	
	Now, let $f$ be any continuous function of bounded variation.
	For any $1\leq k\leq M$, let $v_k'$ be an independent copy of $v_k$ and let $$L_k=L-2\beta (v_kv_k^T-v_k'{v_k'}^T).$$ In other words, $L_k$ is obtained from $L$ by replacing  $v_kv_k^T$ by $v_k'{v_k'}^T$. Since $\mbox{rank}(L-L_k)\leq 2,$ the rank inequality implies that
	\begin{align*}
		\Bigl|\frac{1}{N}\mbox{tr}f(L)-\frac{1}{N}\mbox{tr}f(L_k)\Bigr|&=	\Bigl|\int f(x)\mu_{L}(dx)-\int f(x)\mu_{L_k}(dx)\Bigr|\leq \frac{2\|f\|_{\mbox{\tiny BV}}}{N}.
	\end{align*}
	Consequently, from the Efron-Stein inequality,
	\begin{align}
		\label{lem1:proof:eq1}
		\frac{1}{N^2}\mbox{Var}(\mbox{tr}f(L)|M)&\leq \frac{2M\|f\|_{\mbox{\tiny BV}}^2}{N^2}.
	\end{align}
	Similarly, conditionally on $g$ and $\mathcal{I},$ if $M'$ is an independent copy of $M$ and $L':=\sum_{k\leq M'}v_kv_k^T$, then $\mbox{rank}(L-L')\leq |M'-M|$ and
	\begin{align*}
		\Bigl|\frac{1}{N}\mbox{tr}f(L)-\frac{1}{N}\mbox{tr}f(L')\Bigr|=	\Bigl|\int f(x)\mu_{L}(dx)-\int f(x)\mu_{L'}(dx)\Bigr|\leq \frac{\|f\|_{\mbox{\tiny BV}}|M-M'|}{N}
	\end{align*}
	and from Jensen's inequality,
	\begin{align*}
		\frac{1}{N^2}	\mbox{Var}(\mbox{tr}f(L)|g,\mathcal{I})&\leq \frac{\|f\|_{\mbox{\tiny BV}}^2\e |M-M'|^2}{N^2}=\frac{2\alpha\|f\|_{\mbox{\tiny BV}}^2}{N}.
	\end{align*}
	This and \eqref{lem1:proof:eq1} together completes the proof since
	\begin{align*}
		\mbox{Var}(\mbox{tr}f(L))&\leq 2\e \Bigl[\mbox{Var}(\mbox{tr}f(L)|M)\Bigr]+2\e\Bigl[	\mbox{Var}(\mbox{tr}f(L)|g,\mathcal{I})\Bigr].
	\end{align*}
\end{proof}

\subsection{Proof of Theorem \ref{thm:concentration}}

In view of \eqref{add:eq7}, the proof of Theorem \ref{thm:concentration} follows directly from Lemmas \ref{lem2} and \ref{lem4} below.
\begin{lemma}\label{lem2}
	There exists a constant $K>0$ independent of $N$ such that
	\begin{align*}
		\e\Bigl|\frac{1}{N}\mathbbm{1}^TA^{-1}\mathbbm{1}-\frac{1}{N}\e\mathbbm{1}^TA^{-1}\mathbbm{1}\Bigr|^2&\leq \frac{K}{N}.
	\end{align*}
\end{lemma}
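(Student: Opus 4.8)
The plan is to split $\mathbbm{1}^{T}A^{-1}\mathbbm{1}$ into its diagonal and off-diagonal contributions,
$$\tfrac1N\mathbbm{1}^{T}A^{-1}\mathbbm{1}=\tfrac1N\,\mathrm{tr}(A^{-1})+\tfrac1N\sum_{i\neq j}(A^{-1})_{ij}=:Y_1+Y_2,$$
to bound $\mathrm{Var}(Y_1)$ and $\mathrm{Var}(Y_2)$ separately by $O(1/N)$, and then to conclude via $\mathrm{Var}(Y_1+Y_2)\le 2\mathrm{Var}(Y_1)+2\mathrm{Var}(Y_2)$.

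For the diagonal term I would use that $A=I_N+L$ with $L=2\beta\sum_{k\le M}v_kv_k^{T}\succeq 0$, so that $\mathrm{tr}(A^{-1})=\mathrm{tr}(f(L))$ for $f(x)=(1+x)^{-1}$, which is continuous and monotone on $\mathbb{R}_+$ with $\|f\|_{\mbox{\tiny BV}}=1$. Lemma~\ref{lem1} then immediately gives $\mathrm{Var}(Y_1)=\frac1{N^2}\mathrm{Var}(\mathrm{tr}\,f(L))\le 8\alpha/N$, with no further work.

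The off-diagonal term is where the content lies, and it rests entirely on Lemma~\ref{lem0}. By \eqref{lem0:eq1} and the exchangeability of the model in the site labels, $\e Y_2=0$, so $\mathrm{Var}(Y_2)=\e[Y_2^{2}]=N^{-2}\sum_{i\neq j,\,k\neq l}\e\!\big[(A^{-1})_{ij}(A^{-1})_{kl}\big]$. I would classify the pairs $(\{i,j\},\{k,l\})$ by $|\{i,j\}\cap\{k,l\}|$: the terms with four distinct indices vanish by \eqref{lem0:eq3}, the terms sharing exactly one index vanish by \eqref{lem0:eq2}, and only the pairs with $\{i,j\}=\{k,l\}$ survive, each contributing $2\,\e[(A^{-1})_{ij}^{2}]$ since $A^{-1}$ is symmetric. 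Hence $\mathrm{Var}(Y_2)=\frac2{N^2}\sum_{i\neq j}\e[(A^{-1})_{ij}^{2}]$. Finally, since $A\succeq I_N$ forces $0\preceq A^{-1}\preceq I_N$ and therefore $A^{-2}\preceq I_N$, I would bound $\sum_{j\neq i}(A^{-1})_{ij}^{2}\le\sum_{j}(A^{-1})_{ij}^{2}=(A^{-2})_{ii}\le 1$, so that $\sum_{i\neq j}\e[(A^{-1})_{ij}^{2}]\le\e[\mathrm{tr}(A^{-2})]\le N$ and thus $\mathrm{Var}(Y_2)\le 2/N$. Combining the two estimates yields the claim with, say, $K=16\alpha+4$.

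Given Lemmas~\ref{lem0} and \ref{lem1}, the remaining steps are routine; the genuine obstacle — already surmounted in Lemma~\ref{lem0}, which is precisely where the symmetry of $\mathcal{D}$ enters — is knowing that the off-diagonal entries of $A^{-1}$ are centered and pairwise uncorrelated, for this is what makes $\sum_{i\neq j}(A^{-1})_{ij}$ behave like a sum of $\sim N^{2}$ mean-zero terms of typical size $N^{-1/2}$ rather than contributing at order $N$. Note that beyond Lemmas~\ref{lem0} and \ref{lem1} the argument uses only $A\succeq I_N$, so it does not require $\mathcal{D}$ to be bounded.
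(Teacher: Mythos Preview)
Your proof is correct and follows essentially the same route as the paper: split into diagonal and off-diagonal parts, handle the diagonal via Lemma~\ref{lem1} with $f(x)=(1+x)^{-1}$, and kill the off-diagonal cross-terms using all three parts of Lemma~\ref{lem0}, bounding the surviving second moments by $(A^{-2})_{ii}\le 1$. The only cosmetic difference is that the paper invokes exchangeability to write $(N-1)\e|A_{12}^{-1}|^2\le 1$ whereas you sum directly to $\e\,\mathrm{tr}(A^{-2})\le N$, which is the same estimate.
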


\begin{proof}
	From \eqref{lem0:eq1}, we can write
	\begin{align}
		\nonumber	\e\Bigl|\frac{1}{N}\mathbbm{1}^TA^{-1}\mathbbm{1}-\frac{1}{N}\e\mathbbm{1}^TA^{-1}\mathbbm{1}\Bigr|^2&=\e \Bigl|\frac{1}{N}\sum_{i=1}^N(A_{ii}^{-1}-\e A_{ii}^{-1})+\frac{1}{N}\sum_{i\neq j}A_{ij}^{-1}\Bigr|^2\\
		\label{lem0:proof:eq1}	&\leq2\e \Bigl|\frac{1}{N}\sum_{i=1}^N(A_{ii}^{-1}-\e A_{ii}^{-1})\Bigr|^2+2\e\Bigl|\frac{1}{N}\sum_{i\neq j}A_{ij}^{-1}\Bigr|^2.
	\end{align}
	To control the second term, note that $0\leq A_{11}^{-2}\leq 1$ so that
	\begin{align*}
		1\geq \e A_{11}^{-2}=\sum_{i=1}^N\e A_{1i}^{-1}A_{i1}^{-1}=\e |A_{11}^{-1}|^2+(N-1)\e |A_{12}^{-1}|^2,
	\end{align*}
	which implies that $(N-1)\e |A_{12}^{-1}|^2\leq 1$ and thus, this together with \eqref{lem0:eq2} and \eqref{lem0:eq3} yields that
	\begin{align*}
		\e\Big|\frac{1}{N}\sum_{i\neq j}A_{ij}^{-1}\Bigr|^2&=\frac{1}{N^2}\sum_{i\neq j}\e|A_{ij}^{-1}|^2=\frac{N-1}{N}\e |A_{12}^{-1}|^2\leq \frac{1}{N}.
	\end{align*}
	Next we handle the first term of \eqref{lem0:proof:eq1}. Note that if we take $f(x)=1/(1+x)$ for $x\in \mathbb{R}_+,$ then $f(L)=A^{-1}$ and from Lemma \ref{lem1} and noting that $\|f\|_{\mbox{\tiny BV}}=1$, 
	$$
	\e \Bigl|\frac{1}{N}\sum_{i=1}^N(A_{ii}^{-1}-\e A_{ii}^{-1})\Bigr|^2=\frac{1}{N^2}\mbox{Var}(\mbox{tr} A^{-1})\leq \frac{16\alpha}{N}.
	$$
	This completes our proof.
\end{proof}

\begin{lemma}\label{lem4}
	There exists a constant $K>0$ independent of $N$ such that
	\begin{align*}
		\e\Bigl|\frac{1}{N}\log \det A-\frac{1}{N}\e\log \det A\Bigr|\leq \frac{K}{\sqrt{N}}.
	\end{align*}
\end{lemma}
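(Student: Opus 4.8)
The plan is to prove the stronger statement $\mathrm{Var}(\log\det A)\le KN$, which immediately gives the lemma via $\e\bigl|\frac1N\log\det A-\frac1N\e\log\det A\bigr|\le N^{-1}\bigl(\mathrm{Var}(\log\det A)\bigr)^{1/2}$. Since $\log\det A=\sum_i\log(1+\lambda_i)$ with $\lambda_i$ the eigenvalues of $L=2\beta\sum_{k\le M}v_kv_k^T$, one is tempted to invoke Lemma~\ref{lem1} with $f(x)=\log(1+x)$; but $\log(1+x)$ is not of bounded variation on $\R_+$, so I would instead exploit the rank-one structure of $A$ through the matrix determinant lemma. I would split the randomness into the two independent blocks $M$ and $(g,\mathcal I)$ and apply the tensorization of variance:
$$\mathrm{Var}(\log\det A)\le \e\bigl[\mathrm{Var}(\log\det A\mid M)\bigr]+\e\bigl[\mathrm{Var}_M(\log\det A\mid g,\mathcal I)\bigr].$$

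First block: fix $M$, let $v_k'$ be an independent copy of $v_k$, and let $A_k$ denote $A$ with $v_kv_k^T$ replaced by $v_k'(v_k')^T$. Writing $A^{(k)}=I+2\beta\sum_{l\ne k}v_lv_l^T\ge I$, the matrix determinant lemma gives $\log\det A-\log\det A_k=\log\bigl(1+2\beta v_k^T(A^{(k)})^{-1}v_k\bigr)-\log\bigl(1+2\beta (v_k')^T(A^{(k)})^{-1}v_k'\bigr)$, which, since $0\le (A^{(k)})^{-1}\le I$, is bounded in absolute value by $\log(1+2\beta\|v_k\|^2)+\log(1+2\beta\|v_k'\|^2)$. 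Because $\|v_1\|^2=\sum_{r=1}^pg_{1,I(1,r)}^2$ is integrable and $\log^2(1+x)\le x$ on $\R_+$, the quantity $b_1:=\log(1+2\beta\|v_1\|^2)$ has finite second moment; the Efron--Stein inequality then yields $\mathrm{Var}(\log\det A\mid M)\le C_1 M$, and hence $\e[\mathrm{Var}(\log\det A\mid M)]\le C_1\alpha N$.

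Second block: condition on $(g,\mathcal I)$ and telescope, $\log\det A=\sum_{k=1}^M a_k$ with $a_k:=\log\bigl(1+2\beta v_k^T S_{k-1}^{-1}v_k\bigr)$, $S_{k-1}=I+2\beta\sum_{l<k}v_lv_l^T$, so that $0\le a_k\le b_k:=\log(1+2\beta\|v_k\|^2)$. With $M'$ an independent copy of $M$, $\bigl|\sum_{k\le M}a_k-\sum_{k\le M'}a_k\bigr|\le\sum_{k=M\wedge M'+1}^{M\vee M'}b_k$, and since the $b_k$ are i.i.d.\ with finite first two moments and independent of $(M,M')$, conditioning on $|M-M'|$ gives $\e\bigl[\bigl(\sum_{k=M\wedge M'+1}^{M\vee M'}b_k\bigr)^2\bigr]\le \mathrm{Var}(b_1)\,\e|M-M'|+(\e b_1)^2\,\e|M-M'|^2$. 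As $\e|M-M'|^2=2\,\mathrm{Var}(M)=2\alpha N$ and $\e|M-M'|\le(2\alpha N)^{1/2}$, this is $O(N)$, and therefore $\e[\mathrm{Var}_M(\log\det A\mid g,\mathcal I)]\le\tfrac12\e\bigl[\bigl(\sum_{k=M\wedge M'+1}^{M\vee M'}b_k\bigr)^2\bigr]=O(N)$.

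Adding the two bounds gives $\mathrm{Var}(\log\det A)\le KN$ and completes the proof. The main obstacle, and the only place care is needed, is precisely the unboundedness of the test function $\log(1+x)$: one cannot read the concentration off an empirical-spectral-measure / rank-inequality estimate as in Lemma~\ref{lem1}, and must instead use that the determinant factorizes (matrix determinant lemma), so that resampling a single $v_k$ or changing the Poisson count $M$ by one perturbs $\log\det A$ by an amount whose second moment is finite — which is exactly what the finite-second-moment hypothesis on $\mathcal D$ guarantees, and which is also why the $\sqrt N$-scale fluctuations of $M$ contribute only $O(N)$, not more, to the variance.
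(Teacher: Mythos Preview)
Your proof is correct and takes a genuinely different route from the paper's. The paper does not use the matrix determinant lemma; instead it truncates the test function, writing $\log\det A=\mbox{tr}\,f_c(L)+(\text{large-eigenvalue error})$ with $f_c(x)=\log(1+x\wedge c)$, bounds the error via $\e\,\mbox{tr}\,L=2\beta p\alpha\,\e g_{1,1}^2$, and then feeds $f_c$ (now of bounded variation $\log(1+c)$) into Lemma~\ref{lem1} (rank inequality plus Efron--Stein for linear spectral statistics), finally choosing $c=N$. Your approach sidesteps the truncation entirely: by factorizing $\det A$ through rank-one updates you get increments $\log(1+2\beta v_k^TS_{k-1}^{-1}v_k)\le\log(1+2\beta\|v_k\|^2)$ whose second moment is controlled directly by $\e\|v_1\|^2<\infty$, and Efron--Stein applied blockwise (over the $v_k$'s for fixed $M$, and over $M$ for fixed $(g,\mathcal I)$) gives a clean variance bound $\mathrm{Var}(\log\det A)\le KN$ --- which is in fact stronger than the $L^1$ bound stated. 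The paper's route has the virtue of reusing Lemma~\ref{lem1} as a black box for any bounded-variation spectral statistic; yours is more direct for the specific functional $\log\det$, avoids the log factors that the truncation introduces, and makes transparent exactly where the finite-second-moment hypothesis on $\mathcal D$ enters.
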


\begin{proof}
	For any $c>0,$ let $f_c(x)=\log (1+x \wedge c)$ if $x \ge 0$. Recall $L$ from Lemma \ref{lem1} and let $\lambda_i(L)$ be the eigenvalues of $L.$ Write
	\begin{align*}
		\log   \det (I+L)
		&=\mbox{tr}f_c(L)+\sum_{i:\lambda_i(L)>c}\log(1+\lambda_i(L))-\log (1+c)\#\{i:\lambda_i(L)>c\}
	\end{align*}
	so that 
	\begin{align*}
		& \Bigl|	\frac{1}{N}\log   \det (I+L)-\frac{1}{N}\mbox{tr}f_c(L)\Bigr|\\		
		&\leq \frac{1}{N}\sum_{i:\lambda_i(L)> c}\log  (1+\lambda_i(L))+\frac{\log(1+c)}{N}\#\{i:\lambda_i(L)>c\}\\
		&\leq  \frac{1}{N}\sum_{i:\lambda_i(L)> c}\lambda_i(L)^{1/2}+\frac{\log2(1+c)}{N}\#\{i:\lambda_i(L)>c\},
	\end{align*}
	where the second inequality used $1+x\leq 2e^{x^{1/2}}$ for $x>0.$	To control this inequality, from the Cauchy-Schwarz and Markov inequalities, we obtain,
	\begin{align*}
		\frac{1}{N}\e\#\{i:\lambda_i(L)>c\}\leq \frac{\e\mbox{tr} L}{cN}
	\end{align*}
	and
	\begin{align*}
		\frac{1}{N}\e\sum_{i:\lambda_i(L)> c}\lambda_i(L)^{1/2}&\leq \e\Bigl(\frac{\#\{i:\lambda_i(L)>c\}}{N}\Bigr)^{1/2}\Bigl(\frac{\mbox{tr}L}{N}\Bigr)^{1/2}\leq \frac{\e\mbox{tr}L}{\sqrt{c}N}.
	\end{align*}
	Since
	\begin{align*} 
		\frac{1}{N}\e\mbox{tr}L&=\frac{2\beta}{N}\e\sum_{k=1}^M\sum_{r= 1}^p g_{k,I(k,r)}^2=2\beta p\alpha \e g_{1,1}^2, 
	\end{align*}
	putting these inequalities together yields that
	\begin{align}\label{eq:ctr1}
		\e \Bigl|	\frac{1}{N}\log   \det (I+L)-\frac{1}{N}\mbox{tr}f_c(L)\Bigr|&\leq 2\beta\alpha p\e g_{1,1}^2\Bigl(\frac{1}{\sqrt{c}}+\frac{\log 2(1+c)}{c}\Bigr)
	\end{align} 
	On the other hand, note that $\|f_c\|_{BV}=\log  (1+c)$ so that from Lemma \ref{lem1},
	\begin{align*}
		%\label{eq:ctr2}
		\e \Bigl|\frac{1}{N}\mbox{tr}f_c(L)-\frac{1}{N}\e\mbox{tr}f_c(L)\Bigr|\leq \frac{8\alpha\log^2(1+c)}{N}.
	\end{align*}
	Plugging $c=N$ into these two inequalities yields the desired result.
\end{proof}

\section{Concentration of the generalized multi-overlap}\label{sec:conop}

Let $\la\cdot\ra^c$ be the Gibbs measure associated to the partition function
$
\int e^{-X_N(\tau)}\eta_N(d\tau),
$
where $$
X_N(\tau):=\beta \sum_{k=1}^M\Bigl(\sum_{r=1}^p g_{k,I(k,r)}\tau_{I(k,r)}\Bigr)^2.
$$
The $X_N$ here is different from $H_N$ by dropping out the external field term, i.e., $h=0.$
While $\sigma\thicksim \la \cdot\ra$ is $N(\mu,A^{-1})$ as explained in Section \ref{sec:concentration}, we see that $\tau\thicksim \la \cdot\ra^c$ is $N(0,A^{-1}).$

Let $\kappa\geq 1$  and $f:\mathbb{R}^\kappa\to[-1,1]$ be a smooth function satisfying that
\begin{align}\label{sec:conop:eq1}
	|f(x)-f(x')|\leq \|x-x'\|,\,\,\forall x,x'\in \mathbb{R}^\kappa
\end{align}
and
\begin{align}\label{sec:conop:eq2}
	-\frac{1}{4}I\leq \Delta^2f(x)\leq \frac{1}{4}I,\,\,\forall x\in \mathbb{R}^\kappa.
\end{align} 
For i.i.d. samples $\tau^1,\ldots,\tau^\kappa$ from $\la \cdot\ra^c,$ denote $\vec\tau =(\tau^1,\ldots,\tau^\kappa)$ and $$\|\vec\tau\|=\sqrt{\|\tau^1\|^2+\cdots+\|\tau^\kappa\|^2}.$$ Define the generalized multi-overlap by
\begin{align*}
	Q=Q(\vec\tau)=\frac{1}{N}\sum_{i=1}^Nf(\tau_i^1,\ldots,\tau_i^\kappa).
\end{align*}
The main result of this section is the following concentration. 

\begin{prop}\label{sec:conop:prop1}
	Assume that $\mathcal{D}$ is bounded. There exists a constant $K>0$ (depending on $\alpha, \beta, \kappa$ and $\mathcal{D}$) such that 
	\begin{align*}
		\e \bigl\la \bigr|Q-\e\bigl\la Q\bigr\ra^c|\bigr\ra^c\leq \frac{K}{N^{1/4}},\,\,\forall N\geq 1.
	\end{align*}	
\end{prop}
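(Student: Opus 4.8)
The plan is to split the quantity of interest, as in \eqref{add:eq-20}--\eqref{add:eq-21}, via
\[
\e\la|Q-\e\la Q\ra^c|\ra^c\;\le\;\e\la|Q-\la Q\ra^c|\ra^c\;+\;\e\bigl|\la Q\ra^c-\e\la Q\ra^c\bigr| ,
\]
bounding the first summand at rate $N^{-1/2}$ and the second at the (dominant) rate $N^{-1/4}$. Both are handled through the tilted coupled system: for $\lambda\in\R$, let $\la\cdot\ra_\lambda^c$ be the probability measure on $(\R^N)^\kappa$ with density proportional to $\exp\!\bigl(-\sum_{\ell=1}^\kappa X_N(\tau^\ell)+\lambda N Q(\vec\tau)\bigr)$ against $\bigotimes_{\ell=1}^\kappa\eta_N$, and let $F_N^c(\lambda)$ be $N^{-1}$ times the logarithm of its normalizing constant, so that $\la\cdot\ra_0^c=\la\cdot\ra^c$. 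The negative log-density of $\la\cdot\ra_\lambda^c$ on $\R^{N\kappa}$ has Hessian bounded below by $(1-|\lambda|/4)\,\mathrm{Id}_{N\kappa}$: the convex terms $X_N(\tau^\ell)$ contribute a non-negative matrix, the Gaussian factors contribute $\mathrm{Id}_{N\kappa}$, and $-\lambda\sum_{i=1}^N f(\tau_i^1,\dots,\tau_i^\kappa)$ contributes a block-diagonal matrix whose $i$-th block is $-\lambda\,\Delta^2 f(\tau_i^1,\dots,\tau_i^\kappa)$, of operator norm $\le|\lambda|/4$ by \eqref{sec:conop:eq2}. In particular $\la\cdot\ra_\lambda^c$ is $\tfrac12$-strongly log-concave for $|\lambda|\le 2$.

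For the first summand, $Q$ is $N^{-1/2}$-Lipschitz in $\vec\tau$, since $\|\nabla Q\|^2=N^{-2}\sum_{i=1}^N\|\nabla f(\tau_i^1,\dots,\tau_i^\kappa)\|^2\le N^{-1}$ by \eqref{sec:conop:eq1}. The Brascamp--Lieb inequality \cite{brascamp2002extensions} therefore yields a constant $K>0$, independent of $N$, with $\e\la(Q-\la Q\ra_\lambda^c)^2\ra_\lambda^c\le K/N$ for all $|\lambda|\le 2$ and $N\ge 1$; evaluating at $\lambda=0$ and applying Cauchy--Schwarz gives $\e\la|Q-\la Q\ra^c|\ra^c\le\sqrt{K/N}$.

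For the second summand, I would use the convexity of $\lambda\mapsto F_N^c(\lambda)$ together with $(F_N^c)'(\lambda)=\la Q\ra_\lambda^c$ and $(F_N^c)''(\lambda)=N\la(Q-\la Q\ra_\lambda^c)^2\ra_\lambda^c$. Sandwiching $\la Q\ra^c=(F_N^c)'(0)$ and $\e\la Q\ra^c$ between the forward and backward difference quotients of $F_N^c$ at $0$ gives, for $0<\lambda\le 2$,
\[
\e\bigl|\la Q\ra^c-\e\la Q\ra^c\bigr|\;\le\;\frac1\lambda\sum_{s\in\{-\lambda,\,0,\,\lambda\}}\e\bigl|F_N^c(s)-\e F_N^c(s)\bigr|\;+\;\int_{-\lambda}^{\lambda}\e(F_N^c)''(t)\,dt ,
\]
where the integral is $\le N\cdot 2\lambda\cdot(K/N)=2K\lambda$ by the Brascamp--Lieb bound. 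Everything thus reduces to the free-energy concentration \eqref{add:eq-19}: a constant $K'>0$ with $\e|F_N^c(t)-\e F_N^c(t)|\le K'/\sqrt N$ for all $|t|\le N^{-1/4}$ and $N\ge1$. Granting it, the choice $\lambda=N^{-1/4}$ produces $\e|\la Q\ra^c-\e\la Q\ra^c|\le(3K'+2K)N^{-1/4}$, which combined with the first summand proves the proposition.

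To establish \eqref{add:eq-19}, I would bound $\V{F_N^c(t)}$ by a two-stage martingale-difference (Efron--Stein) argument: reveal first the edge count $M$ (written as $\sum_{j=1}^N M_j$ with $M_j$ i.i.d.\ $\mathrm{Pois}(\alpha)$), then the edge vectors $v_1,\dots,v_M$ one at a time. For the second stage, replacing $v_k$ by an independent copy and interpolating linearly, a Cauchy--Schwarz bound together with the boundedness of $\mathcal D$ (which makes $\|v_k\|$ deterministically bounded) shows the induced change in $F_N^c(t)$ is at most a constant multiple of $N^{-1}\sum_{\ell=1}^\kappa\sum_{r=1}^p\int_0^1\la(\tau^\ell_{I(k,r)})^2\ra^c_{t,s}\,ds$ along the interpolation; squaring and using $\la\tau^2\ra^2\le\la\tau^4\ra$ reduces the $k$-th squared increment to a fourth-moment estimate of the form $\e[\la|\tau_i^\ell|^4\ra_\lambda^c\mid M]\le K''(1+(M/N)^2)$ (uniformly in $|\lambda|\le N^{-1/4}$, and likewise for the interpolated measures), after which summing over $k\le M$ and averaging over $M\sim\mathrm{Pois}(\alpha N)$ (using $\e M=\alpha N$ and $\e M^3=O(N^3)$) gives $\e[\V{F_N^c(t)\mid M}]=O(1/N)$; the fluctuation of $M$ itself is controlled by the same increment bound applied to the addition and removal of edges, once more through that moment estimate. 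This gives $\V{F_N^c(t)}\le (K')^2/N$ uniformly in $|t|\le N^{-1/4}$, and $\e|F_N^c(t)-\e F_N^c(t)|\le\sqrt{\V{F_N^c(t)}}$ finishes \eqref{add:eq-19}. The step I expect to be the main obstacle is precisely this fourth-moment estimate: for the untilted Gaussian measure $\la\cdot\ra^c=N(0,A^{-1})$ it is immediate, since $\la\tau_i^4\ra^c=3\bigl((A^{-1})_{ii}\bigr)^2\le3$, but the tilt $-\lambda N Q$ couples the $\kappa$ replicas and may displace the spin means, so controlling $\la|\tau_i^\ell|^4\ra_\lambda^c$ uniformly in the random number of edges $M$ is delicate and forces $\lambda$ to vanish; a careful analysis — exploiting the strong log-concavity above and the structure of the Poisson edge set — succeeds for $\lambda$ of order $N^{-1/4}$, which is fortuitously the very scale that balances $K'/(\lambda\sqrt N)$ against $K\lambda$ in the reduction. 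Once this moment bound is in hand, the Brascamp--Lieb step, the convexity sandwich, and the martingale bookkeeping are all routine.
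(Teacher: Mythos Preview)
Your outline matches the paper's proof step for step: the Brascamp--Lieb thermal bound (Lemma~\ref{add:lem4}), the convexity sandwich reducing $\e|\la Q\ra^c-\e\la Q\ra^c|$ to free-energy concentration plus $\int_{-\lambda}^\lambda\e(F_N^c)''$, and the choice $\lambda=N^{-1/4}$ are exactly what the paper does. The martingale bookkeeping for \eqref{add:eq-19} differs only cosmetically---the paper handles the $M$-fluctuation by comparing with an independent copy $\hat M$ rather than decomposing $M=\sum_j M_j$, and bounds the edge increments by \emph{deleting} $v_k$ (forming a free energy $\bar F_N^c$ with one fewer edge and applying Jensen) rather than interpolating---but these lead to the same fourth-moment requirement.

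Where your proposal has a genuine gap is the fourth-moment bound itself, which you correctly flag as the crux but then attribute to ``strong log-concavity and the structure of the Poisson edge set.'' Log-concavity controls centered moments of Lipschitz observables, not raw fourth moments of coordinates, and the crude bound $\la|\tau_1^1|^4\ra_\lambda^c\le\la\|\vec\tau\|^4\ra_\lambda^c$ from Lemma~\ref{add:lem3} is too large by a factor $N^2$. The paper's actual mechanism (Lemma~\ref{add:lem1}) is quite different: it writes $\la\tau_1^1\phi(\vec\tau)\ra_\lambda^c$ as a ratio of $\la\cdot\ra^c$-expectations and applies \emph{Gaussian integration by parts} under the untilted law $\tau\sim N(0,A^{-1})$, producing the identity~\eqref{add:eq-10}. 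Iterating this on $\la|\tau_1^1|^4\ra_\lambda^c$, summing over the spin index, and using $\|A^{-1}\|\le1$, $\|A^{-1}\circ A^{-1}\|\le1$ yields a self-referential inequality
\[
\e\bigl[\la|\tau_1^1|^4\ra_\lambda^c\,\big|\,M\bigr]\le K(1+M/N)+K'\bigl(\lambda+\lambda^2\sqrt N\bigr)\sqrt{1+M/N}\,\bigl(\e[\la|\tau_1^1|^4\ra_\lambda^c\mid M]\bigr)^{1/2},
\]
which closes to give the claimed $K''(1+(M/N)^2)$ precisely when $\lambda^2\sqrt N=O(1)$. So the scale $\lambda\sim N^{-1/4}$ is forced not by log-concavity but by the $\lambda^2\sqrt N$ term emerging from the second application of Gaussian IBP.
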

For the rest of this section, we establish this proposition in three subsections. Throughout our entire argument as well as in the statements of the lemmas, the letters $K,K',K'',$ etc. stand for deterministic constants dependent on $\alpha, \beta, \kappa$ and $\mathcal{D}$ and independent of $N$, but they might be different from each occasion.

\subsection{Moment controls}\label{sec:momentcontrol}

For any $|\lambda|\leq 1$, define an auxiliary free energy by
\begin{align*}
	F_N^c(\lambda)&=\frac{1}{N}\log \int e^{-\sum_{\ell=1}^\kappa X_N(\tau^\ell)+\lambda NQ(\vec\tau)}\eta_N(d\vec\tau)
\end{align*}
Denote by $\la \cdot\ra_{\lambda}^c$ the Gibbs expectation associated to $F_N^c(\lambda)$. In this section, we establish two moment bounds for the spin configuration sampled from $\la\cdot\ra_\lambda^c.$

\begin{lemma}\label{add:lem3}
	Assume that $\mathcal{D}$ is bounded.	For any $n\geq 1,$ there exists a constant $K>0$  such that for all $|\lambda|\leq 1$ and $N\geq 1,$
	\begin{align*}
		\la \|\vec\tau\|^{2n}\ra_{\lambda}^c&\leq KN^n\Bigl(1+\frac{M}{N}\Bigr)^n.
	\end{align*}
\end{lemma}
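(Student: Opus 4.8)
The plan is to route everything through the uniform strong log-concavity of the coupled Gibbs measure, so that the random number of edges $M$ and the disorder never need to be controlled directly. View $\la\cdot\ra_\lambda^c$ as the probability measure $\nu$ on $\R^{N\kappa}$ with Lebesgue density proportional to $e^{-W(\vec\tau)}$, where $W(\vec\tau)=\tfrac12\|\vec\tau\|^2+\sum_{\ell=1}^\kappa X_N(\tau^\ell)-\lambda N Q(\vec\tau)$. Each $X_N(\tau^\ell)$ is convex, being $\beta>0$ times a sum of squares of linear forms, so $\sum_\ell X_N(\tau^\ell)$ contributes a nonnegative term to $\nabla^2 W$; and since $-\lambda N Q(\vec\tau)=-\lambda\sum_{i=1}^N f(\tau_i^1,\dots,\tau_i^\kappa)$, its Hessian is block diagonal with one $\kappa\times\kappa$ block $-\lambda\,\Delta^2 f(\tau_i^1,\dots,\tau_i^\kappa)$ per site $i$, each of which is $\succeq -\tfrac{|\lambda|}{4}I\succeq -\tfrac14 I$ for $|\lambda|\le 1$ by \eqref{sec:conop:eq2}. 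Hence $\nabla^2 W\succeq \tfrac34 I$ for every $N$, every $|\lambda|\le 1$, and every realization of the disorder; in particular $e^{-W}$ has Gaussian tails, so $\nu$ has finite moments of all orders and the integration by parts below is legitimate.

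\textbf{Second moment.} Integrating $\mathrm{div}\bigl(\vec\tau\, e^{-W}\bigr)$ over $\R^{N\kappa}$ yields the identity $\e_\nu[\vec\tau\cdot\nabla W(\vec\tau)]=N\kappa$. Since $X_N$ is a homogeneous quadratic in $\tau$, Euler's identity gives $\tau^\ell\cdot\nabla_{\tau^\ell}X_N(\tau^\ell)=2X_N(\tau^\ell)\ge 0$; and \eqref{sec:conop:eq1} gives $\|\nabla f\|\le 1$, whence $\bigl|\vec\tau\cdot\nabla(\lambda N Q)\bigr|=|\lambda|\,\bigl|\sum_{i=1}^N\langle\tau_i^\bullet,\nabla f(\tau_i^\bullet)\rangle\bigr|\le \sum_{i=1}^N\|\tau_i^\bullet\|\le \sqrt N\,\|\vec\tau\|$ by Cauchy--Schwarz. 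Dropping the nonnegative middle term, the identity becomes $\e_\nu\|\vec\tau\|^2\le N\kappa+\sqrt N\,\bigl(\e_\nu\|\vec\tau\|^2\bigr)^{1/2}$, and solving this quadratic inequality gives $\e_\nu\|\vec\tau\|^2\le C_\kappa N$ for an explicit constant $C_\kappa$.

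\textbf{Higher moments and conclusion.} Because $\nu$ is log-concave and $\|\cdot\|$ is a norm, the reverse-H\"older (Borell / Kahane--Khintchine) inequality for log-concave measures gives $\e_\nu\|\vec\tau\|^{2n}\le C_n\bigl(\e_\nu\|\vec\tau\|^2\bigr)^n$; alternatively one bootstraps directly from $\nabla^2 W\succeq \tfrac34 I$ via the Brascamp--Lieb inequality $\V{\|\vec\tau\|^m}\le \tfrac43\,\e_\nu\bigl\|\nabla\|\vec\tau\|^m\bigr\|^2=\tfrac{4m^2}{3}\,\e_\nu\|\vec\tau\|^{2m-2}$ together with induction on $n$. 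Either way $\la\|\vec\tau\|^{2n}\ra_\lambda^c=\e_\nu\|\vec\tau\|^{2n}\le C_{n,\kappa}N^n$, which implies the stated bound since $N^n\le N^n(1+M/N)^n$; in fact the estimate obtained this way is deterministic and thus strictly stronger than claimed.

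\textbf{Main obstacle.} Conceptually there is no serious obstacle once the problem is posed through the lower bound $\nabla^2 W\succeq \tfrac34 I$; the only point requiring care is \emph{uniformity} over the random $M$, over the disorder $g$, and over $|\lambda|\le 1$. This is precisely what is gained by discarding the nonnegative term $2\sum_{\ell}X_N(\tau^\ell)$ in the integration-by-parts identity, since that term is where all the $M$- and $g$-dependence resides; the rest of the argument is insensitive to those parameters. (Note that this route does not even use the boundedness of $\mathcal{D}$, which is merely inherited from the standing assumptions of the section.)
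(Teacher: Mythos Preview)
Your proof is correct and takes a genuinely different route from the paper. The paper bounds the exponential moment $\la e^{\frac14\|\vec\tau\|^2}\ra_\lambda^c$ directly: the numerator is handled via $X_N\ge 0$ and $|Q|\le 1$, while the denominator is bounded below via Jensen's inequality, producing the term $\beta\kappa\sum_{k\le M}\|v_k\|^2$; boundedness of $\mathcal D$ is then used to replace $\sum_k\|v_k\|^2$ by $O(M)$, and a moment-from-exponential-moment inequality (\cite[Lemma~3.1.8]{Tal111}) converts this into the stated bound. By contrast, you exploit the uniform strong log-concavity $\nabla^2 W\succeq \tfrac34 I$: the integration-by-parts identity together with Euler's relation lets you discard the nonnegative contribution $2\sum_\ell X_N(\tau^\ell)$---precisely the place where all $M$- and disorder-dependence sits---yielding $\la\|\vec\tau\|^2\ra_\lambda^c\le C_\kappa N$, and Borell's reverse H\"older (or the Brascamp--Lieb bootstrap you sketch) promotes this to all even moments. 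Your argument thus gives the strictly sharper deterministic bound $\la\|\vec\tau\|^{2n}\ra_\lambda^c\le C_{n,\kappa}N^n$, valid without assuming $\mathcal D$ bounded; this in turn mildly streamlines the uses of the lemma downstream (e.g., the $(1+M/N)$ factors in \eqref{add:lem1:proof:eq1}--\eqref{add:lem1:proof:eq1.5} and in the concentration-in-$M$ lemma become unnecessary), though it does not by itself replace the individual-coordinate fourth-moment bound of Lemma~\ref{add:lem1}.
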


\begin{proof}
	First of all, we claim that	 there exists a constant $K$ such that 
	\begin{align*}
		\la e^{\frac{1}{4}\|\vec\tau\|^2}\ra_{\lambda}^c\leq e^{K(\sum_{k=1}^M\|v_k\|^2+N)},
	\end{align*}
	where we recall that $v_k$'s are the vectors defined in \eqref{def_A}. To see this, note that
	\begin{align*}
		\int e^{\frac{1}{4}\|\vec\tau\|^2-\sum_{\ell=1}^\kappa X_N(\tau^\ell)+\lambda NQ(\vec\tau)}\eta_N(d\vec\tau)&\leq \frac{1}{(2\pi)^{\kappa N/2}}\int e^{\frac{1}{4}\|\vec\tau\|^2-\frac{1}{2}\|\vec\tau\|^2+|\lambda|N}d\vec\tau\\
		&=\frac{e^{|\lambda|N}}{(2\pi)^{\kappa N/2}}\int e^{-\frac{1}{4}\|\vec\tau\|^2}d\vec\tau\leq K^N
	\end{align*} for some $K>0$ independent of $\lambda.$ On the other hand, by Jensen's inequality,
	\begin{align*}
		\log\int e^{-\sum_{\ell=1}^\kappa X_N(\tau^\ell)+\lambda NQ(\vec\tau)}\eta_N (d\vec\tau)&\geq -\int \sum_{\ell=1}^\kappa X_N(\tau^\ell)\eta_N(d\vec\tau)-N\\
		&=-\beta \kappa \sum_{k=1}^M\|v_k\|^2-N.
	\end{align*}
	Putting these two inequalities together yields our claim. Finally, recall from \cite[Lemma 3.1.8]{Tal111} that  for any nonnegative random variable $Y,$ 
	$$
	\E Y^n\leq 2^n\bigl(n^n+\bigl(\log \E\exp Y\bigr)^n\bigr),\,\,n=1,2,3,\ldots.
	$$
	Our proof is completed by applying this inequality with $Y= \|\vec\tau\|^{2}$ and using the boundedness of $\mathcal{D}$.
\end{proof}

\begin{lemma}\label{add:lem1}
	Assume that $\mathcal{D}$ is bounded.  There exists a constant $K>0$ such that for any $N\geq 1$ and $|\lambda|\leq 1/N^{1/4}$, 
	\begin{align*}
		\e\bigl[\bigl\la |\tau_i^\ell|^4\bigr\ra_\lambda^c\big|M\bigr]\leq K\Bigl(1+\Bigl(\frac{M}{N}\Bigr)^2\Bigr)
	\end{align*}
	for all $1\leq i\leq N$ and $1\leq \ell\leq \kappa$.
\end{lemma}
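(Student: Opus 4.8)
The plan is to exploit the Gaussian structure of the Gibbs measure $\la\cdot\ra_\lambda^c$. For fixed disorder, the coupled Hamiltonian $\sum_{\ell=1}^\kappa X_N(\tau^\ell) - \lambda N Q(\vec\tau)$ is a quadratic-plus-Lipschitz perturbation of a quadratic form; by \eqref{sec:conop:eq2} the term $-\lambda N Q$ has Hessian bounded in operator norm by $|\lambda| N \cdot \tfrac14 \le \tfrac14 N^{3/4}$, which is negligible against the $\tfrac12\|\vec\tau\|^2$ coming from $\eta_N$ once $N$ is large, so the measure $\la\cdot\ra_\lambda^c$ is log-concave with Hessian at least (roughly) $\tfrac14 I_{\kappa N}$ for all $N$ large and $|\lambda|\le N^{-1/4}$. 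Write $m_i^\ell = \la \tau_i^\ell\ra_\lambda^c$ for the mean and note that, by the Brascamp--Lieb inequality applied to the strictly log-concave measure $\la\cdot\ra_\lambda^c$, the fluctuation $\tau_i^\ell - m_i^\ell$ has bounded-by-constant variance and hence $\la |\tau_i^\ell - m_i^\ell|^4\ra_\lambda^c \le K$ uniformly. So it suffices to control the mean: $\la |\tau_i^\ell|^4\ra_\lambda^c \le 8\bigl(|m_i^\ell|^4 + \la|\tau_i^\ell - m_i^\ell|^4\ra_\lambda^c\bigr) \le 8|m_i^\ell|^4 + K$, and the task reduces to showing $\e[|m_i^\ell|^4 \mid M] \le K(1 + (M/N)^2)$.

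Next I would get an explicit handle on $m_i^\ell$. Since the unperturbed coupled measure ($\lambda=0$) is the centered Gaussian $N(0,A^{-1})$ in each block, a first-order Gaussian integration-by-parts (Gaussian interpolation in $\lambda$) gives $m^\ell = \la \tau^\ell\ra_\lambda^c$ as a solution of a fixed-point-type relation; more concretely, differentiating $F_N^c$ and using that the Hamiltonian is quadratic, one finds $m^\ell = \lambda A^{-1} \nabla_\ell\!\bigl(\tfrac1N\sum_i \widehat{f}\bigr)$-type expression where the gradient of $Q$ in the $\ell$-th block has entries $\tfrac1N \partial_\ell f(\tau_i^1,\dots,\tau_i^\kappa)$, each bounded by $\tfrac1N$ in absolute value by \eqref{sec:conop:eq1}. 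Taking the Gibbs average and using $\|A^{-1}\|_{\mathrm{op}}\le 1$, one gets $|m_i^\ell| \le |\lambda| \cdot \sum_j (A^{-1})_{ij} \cdot (\text{bounded})$, which after Cauchy--Schwarz and $(A^{-2})_{ii}\le 1$ yields $|m_i^\ell| \le C|\lambda| \le C N^{-1/4}$. That already gives $|m_i^\ell|^4 \le C N^{-1} \le C$, which is more than enough. (Even the cruder bound $|m_i^\ell| \le \la|\tau_i^\ell|\ra_\lambda^c$ combined with the moment control below would do, so there is slack here.)

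The remaining ingredient, and the genuine source of the $(1+(M/N)^2)$ on the right-hand side, is an absolute fourth-moment bound on the raw spins, $\e[\la|\tau_i^\ell|^4\ra_\lambda^c \mid M] \le K(1+(M/N)^2)$, obtained from Lemma \ref{add:lem3} by a symmetrization-over-coordinates argument: by exchangeability of the sites (the law of $\mathcal I$ is permutation-invariant), $\e[\la |\tau_i^\ell|^4\ra_\lambda^c\mid M]$ does not depend on $i$, so it equals $\tfrac1N \e\bigl[\sum_{i=1}^N \la|\tau_i^\ell|^4\ra_\lambda^c \mid M\bigr] \le \tfrac1N \e\bigl[\la(\sum_i |\tau_i^\ell|^2)^2\ra_\lambda^c\mid M\bigr] \le \tfrac1N \e\bigl[\la\|\vec\tau\|^4\ra_\lambda^c\mid M\bigr]$, and Lemma \ref{add:lem3} with $n=2$ bounds the last quantity by $\tfrac1N \cdot K N^2 (1+M/N)^2 = K N(1+M/N)^2$ — which is off by a factor $N$. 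So the crude symmetrization is too lossy, and this is where the real work is: the $N^{-1/4}$ smallness of $\lambda$ must be used to beat down the $\lambda N Q$ term in the exponent of the tilted measure. I expect the main obstacle to be precisely this — showing that for $|\lambda|\le N^{-1/4}$ the tilt $e^{\lambda N Q}$ costs only a $\Theta(1)$-per-site factor in the relevant moment, so that one can compare $\la|\tau_i^\ell|^4\ra_\lambda^c$ to $\la|\tau_i^\ell|^4\ra_0^c$ up to bounded multiplicative error. For $\lambda=0$ the measure is exactly $N(0,A^{-1})^{\otimes\kappa}$, so $\la|\tau_i^\ell|^4\ra_0^c = 3((A^{-1})_{ii})^2 \le 3$, a true constant; the plan is to transfer this via a Gaussian change-of-measure / Hölder estimate, writing $\la|\tau_i^\ell|^4\ra_\lambda^c \le \la|\tau_i^\ell|^8\ra_0^{c\,1/2}\cdot \la e^{2\lambda N Q - 2F}\ra_0^{c\,1/2}$-style and bounding the exponential-tilt factor using $|Q|\le 1$ together with the Brascamp--Lieb control on $\mathrm{Var}_0(NQ) = N^2\mathrm{Var}_0(Q) \le N^2 \cdot K/N = KN$, so that $2\lambda N Q$ fluctuates on scale $|\lambda|\sqrt{KN} \le K^{1/2}N^{1/4}$; the factor $(M/N)^2$ enters through the $N$-dependence of the constant $K$ in Lemma \ref{add:lem3} / the crude deterministic bound $\sum_k\|v_k\|^2 \le K M$, and is carried along conditionally on $M$. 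Once this exponential moment is shown to be $e^{o(N)}$ uniformly, the claimed bound follows.
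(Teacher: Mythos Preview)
Your proposal does not close as written. The pointwise bound $|m_i^\ell|\le C|\lambda|$ is false: from $m_i^\ell=\lambda\sum_j A_{ij}^{-1}\la\partial_{x_\ell}f(\vec\tau_j)\ra_\lambda^c$, Cauchy--Schwarz together with $(A^{-2})_{ii}=\sum_j(A_{ij}^{-1})^2\le 1$ controls only the $\ell^2$-norm of the row, while the other factor has $\ell^2$-norm $\le\sqrt N$; you only get $|m_i^\ell|\le|\lambda|\sqrt N$, hence $|m_i^\ell|^4\le N$. Your fallback $|m_i^\ell|\le\la|\tau_i^\ell|\ra_\lambda^c$ is circular (it leads straight back to the statement you are proving), and the H\"older change-of-measure yields a tilt factor of order $e^{C\lambda^2 N}=e^{C\sqrt N}$, which is useless for a per-site moment bound. (Minor: the Hessian of $\lambda NQ$ has operator norm $|\lambda|/4$, not $|\lambda|N/4$; the $N$ in $NQ$ cancels the $1/N$ in $Q$.)

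The mean--fluctuation idea \emph{can} be salvaged, but not pointwise: from $m^\ell=\lambda A^{-1}b$ with $b_j=\la\partial_{x_\ell}f(\vec\tau_j)\ra_\lambda^c$, $\|b\|^2\le N$ and $\|A^{-1}\|\le 1$, one gets $\|m^\ell\|^2\le\lambda^2 N$, hence $\sum_i|m_i^\ell|^4\le\|m^\ell\|^4\le\lambda^4 N^2$, and site-exchangeability gives $\e[|m_i^\ell|^4\mid M]\le\lambda^4 N\le 1$ precisely when $|\lambda|\le N^{-1/4}$. Combined with a fourth-moment Brascamp--Lieb (or Bakry--\'Emery sub-Gaussian) bound on the centered coordinate, this route would in fact yield $O(1)$, stronger than stated.

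The paper takes a different route. It applies the integration-by-parts identity
\[
\la\tau_1^1\phi\ra_\lambda^c=\sum_jA_{1j}^{-1}\la\partial_{\tau_j^1}\phi\ra_\lambda^c+\lambda\Bigl\la\phi\sum_jA_{1j}^{-1}\partial_{x_1}f(\vec\tau_j)\Bigr\ra_\lambda^c
\]
twice with $\phi=(\tau_1^1)^3$, sums the resulting four-term identity over the site index, and bounds each piece using $\|A^{-1}\|\le 1$, $\|A^{-1}\circ A^{-1}\|\le 1$, and Lemma~\ref{add:lem3}. This produces a self-referential inequality $x\le K(1+M/N)+K'\sqrt{1+M/N}\,\sqrt x$ for $x=\e[\la|\tau_1^1|^4\ra_\lambda^c\mid M]$; the restriction $|\lambda|\le N^{-1/4}$ enters as the condition $\lambda^2\sqrt N\le 1$ needed to bound one of the coefficients, and solving the quadratic gives the claim.
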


\begin{proof}
	For any fixed $1\leq \ell\leq \kappa,$ by symmetry among the spins $\tau_1^\ell,\ldots,\tau_N^\ell$, it can be seen that $\e\bigl[\bigl\la |\tau_i^\ell|^4\bigr\ra_\lambda^c\big|M\bigr]$ are all the same for $1\leq i\leq N.$ Although they might vary in $\ell,$ our proof does not distinguish between different $\ell$'s and we shall establish our assertion only for $\e\bigl[\bigl\la |\tau_1^1|^4\bigr\ra_\lambda^c\big|M\bigr]$. To begin with, note that for any smooth $\phi$ of moderate growth, i.e., $\lim_{\|\vec x\|\to\infty}|\phi(\vec x)|e^{-t\|\vec x\|^2}=0$ for all $t>0$, we can write
	\begin{align*}
		\la \tau_1^1\phi(\vec\tau)\ra_\lambda^c&=\frac{\bigl\la\tau_1^1\phi(\vec\tau)e^{\lambda NQ(\vec\tau)}\bigr\ra^c}{\bigl\la e^{\lambda NQ(\vec\tau)}\bigr\ra^c}.
	\end{align*}
	In the numerator, since $\tau^1,\ldots,\tau^\kappa$ are i.i.d. samples from $N(0,A^{-1})$, we can apply the Gaussian integration by parts for $\tau_1^1$ to see that
	\begin{align*}
		\bigl\la\tau_1^1\phi(\vec\tau)e^{\lambda NQ(\vec\tau)}\bigr\ra^c
		&=	\sum_{j=1}^NA_{1j}^{-1}\Bigl\la \Bigl(\partial_{\tau_j^1}\phi(\vec\tau)+\lambda N\phi(\vec\tau)\partial_{\tau_j^1}Q(\vec\tau)\Bigr)e^{\lambda NQ(\vec\tau)}\Bigr\ra^c\\
		&=	\sum_{j=1}^NA_{1j}^{-1}\Bigl\la \Bigl(\partial_{\tau_j^1}\phi(\vec\tau)+\lambda \phi(\vec\tau)\partial_{x_1}f(\vec\tau_j)\Bigr)e^{\lambda NQ(\vec\tau)}\Bigr\ra^c
	\end{align*} 
	with the understanding that $\vec\tau_j=(\tau_j^1,\ldots,\tau_j^\kappa).$ Consequently,
	\begin{align}\label{add:eq-10}
		\la \tau_1^1\phi(\vec\tau)\ra_\lambda^c&=\sum_{j=1}^NA_{1j}^{-1}\Bigl\la \partial_{\tau_j^1}\phi(\vec\tau)\Bigr\ra_\lambda^c+\lambda\Bigl\la \phi(\vec\tau)\sum_{j=1}^NA_{1j}^{-1}\partial_{x_1}f(\vec\tau_j)\Bigr\ra_\lambda^c.
	\end{align}
	In particular, we have
	\begin{align*}
		\la |\tau_1^1|^4\ra_\lambda^c&=3A_{11}^{-1}\la |\tau_1^1|^2\ra_\lambda^c+\lambda\Big\la (\tau_1^1)^3 \sum_{j=1}^NA_{1j}^{-1}\partial_{x_1}f(\vec\tau_j)\Bigr\ra_{\lambda}^c.
	\end{align*} 
	Applying \eqref{add:eq-10} one more time for the second term gives
	\begin{align*}
		\la |\tau_1^1|^4\ra_\lambda^c	&=3A_{11}^{-1}\la |\tau_1^1|^2\ra_\lambda^c+2\lambda A_{11}^{-1}\Big\la \tau_1^1 \sum_{j=1}^NA_{1j}^{-1}\partial_{x_1}f(\vec\tau_j)\Bigr\ra_{\lambda}^c\\
		&\qquad+\lambda\Big\la |\tau_1^1|^2\sum_{j=1}^N|A_{1j}^{-1}|^2\partial_{x_1x_1}f(\vec\tau_j)\Bigr\ra_{\lambda}^c+\lambda^2\Big\la |\tau_1^1|^2\Bigl|\sum_{j=1}^NA_{1j}^{-1}\partial_{x_1}f(\vec\tau_j)\Bigr|^2\Bigr\ra_{\lambda}^c.
	\end{align*}
	Observe that if we switch $\tau_1^1$ to $\tau_i^1$, the same formula remains valid with the only change that $\tau_1^1A_{11}^{-1}$ is replaced by $\tau_i^1A_{ii}^{-1}$ and $A_{1j}^{-1}$ is replaced by $A_{ij}^{-1}.$ Consequently, adding $\la |\tau_1^1|^4\ra_\lambda^c,\ldots,\la |\tau_N^1|^4\ra_\lambda^c$ together and using symmetry among the spins, we arrive at
	\begin{align}
		\label{add:lem1:proof:eq2}N\e\la |\tau_1^1|^4\ra_\lambda^c&=3\e\sum_{i=1}^NA_{ii}^{-1}\la |\tau_i^1|^2\ra_\lambda^c+2\lambda\e\Big\la \sum_{i,j=1}^N A_{ij}^{-1}(A_{ii}^{-1}\tau_i^1)\partial_{x_1}f(\vec\tau_j)\Bigr\ra_{\lambda}^c\\
		\label{add:lem1:proof:eq3}&\qquad +\lambda\e\Big\la \sum_{i,j=1}^N|A_{ij}^{-1}|^2|\tau_i^1|^2\partial_{x_1x_1}f(\vec\tau_j)\Bigr\ra_{\lambda}^c\\
		\label{add:lem1:proof:eq4}& \qquad +\lambda^2\e\Big\la \sum_{i=1}^N\Bigl|\tau_i^1\sum_{j=1}^NA_{ij}^{-1}\partial_{x_1}f(\vec\tau_j)\Bigr|^2\Bigr\ra_{\lambda}^c.
	\end{align}
	We control these terms as follows. To simplify our notation, we denote $\e[\,\cdot\,|M]$ by $\e[\,\cdot\,].$ 
	Recall that $0\leq A_{ii}^{-1}\leq 1$ and $\|A^{-1}\|\leq 1.$ It follows that \eqref{add:lem1:proof:eq2} can be controlled by using Lemma \ref{add:lem3}, 
	\begin{align}
		\label{add:lem1:proof:eq1}\e\sum_{i=1}^NA_{ii}^{-1}\la |\tau_i^1|^2\ra_\lambda^c&\leq \e \bigl\la \|\tau^1\|^2\bigr\ra_\lambda^c\leq K(M+N),\\
		\label{add:lem1:proof:eq1.5}\e\Big\la \Bigl|\sum_{i,j=1}^N A_{ij}^{-1}(A_{ii}^{-1}\tau_i^1)\partial_{x_1}f(\vec\tau_j)\Bigr|\Bigr\ra_{\lambda}^c&\leq \e \bigl\la \|\tau^1\|\|\partial_{x_1}f(\vec\tau)\|\bigr\ra_\lambda^c\leq K\sqrt{N(M+N)},
	\end{align}
	where $\partial_{x_1}f(\vec\tau): = (\partial_{x_1}f(\vec\tau_1),\ldots, \partial_{x_1}f(\vec\tau_N))$. Next, since the Hadamard  product $A^{-1}\circ A^{-1}$ satisfies $\|A^{-1}\circ A^{-1}\|\leq \|A^{-1}\|^2\leq 1,$ \eqref{add:lem1:proof:eq3} can be handled by
	\begin{align}
		\nonumber&\e\Big\la \sum_{i,j=1}^N|A_{ij}^{-1}|^2|\tau_i^1|^2\partial_{x_1x_1}f(\vec\tau_j)\Bigr\ra_{\lambda}^c\\
		\nonumber&\leq \e\|A^{-1}\circ A^{-1}\|\Bigl\la\Bigl(\sum_{i=1}^N|\tau_i^1|^4\Bigr)^{1/2}\Bigl(\sum_{j=1}^N|\partial_{x_1x_1}f(\vec\tau_j)|^2\Bigr)^{1/2}\Bigr\ra_{\lambda}^c\\
		\nonumber&\leq K\sqrt{N}\Bigl(\e\Bigl\la\sum_{i=1}^N|\tau_i^1|^4\Bigr\ra_{\lambda}^c\Bigr)^{1/2}\\
		\label{add:lem1:proof:eq5}&=KN\bigl(\e \bigl\la |\tau_1^1|^4\bigr\ra_\lambda^c\bigr)^{1/2}.
	\end{align}
	To handle \eqref{add:lem1:proof:eq4}, we use the identity $\|x\|^2=\sup_{\|a\|=1}(a^Tx)^2$ to write
	\begin{align*}
		\e\Big\la \sum_{i=1}^N\Bigl|\tau_i^1\sum_{j=1}^NA_{ij}^{-1}\partial_{x_1}f(\vec\tau_j)\Bigr|^2\Bigr\ra_{\lambda}^c&=\e\Big\la \sup_{\|a\|=1}\Bigl(\sum_{i,j=1}^NA_{ij}^{-1}(a_i\tau_i^1)\partial_{x_1}f(\vec\tau_j)\Bigr)^2\Bigr\ra_{\lambda}^c\\
		&\leq KN\e\Big\la \sup_{\|a\|=1}\Bigl|\sum_{i=1}^N(a_i\tau_i^1)^2\Big|\Bigr\ra_{\lambda}^c,
	\end{align*}
	Here, the last inequality can be controlled by using the Cauchy-Schwarz inequality,
	\begin{align*}
		\e\Bigl\la	\sup_{\|a\|=1}\Bigl|\sum_{i=1}^N(a_i\tau_i^1)^2\Big|\Bigr\ra_{\lambda}^c&\leq \e\Bigl\la\Bigl(\sum_{i=1}^N|\tau_i^1|^4\Bigr)^{1/2}\Big\ra_{\lambda}^c\\
		&\leq \Bigl(\e\Bigl\la\sum_{i=1}^N|\tau_i^1|^4\Big\ra_{\lambda}^c\Bigr)^{1/2}=\sqrt{N}\bigl(\e\bigl\la|\tau_1^1|^4\bigr\ra_{\lambda}^c\bigr)^{1/2},
	\end{align*}
	which implies that
	\begin{align}\label{add:lem1:proof:eq6}
		\e\Big\la \sum_{i=1}^N\Bigl|\tau_i^1\sum_{j=1}^NA_{ij}^{-1}\partial_{x_1}f(\vec\tau_j)\Bigr|^2\Bigr\ra_{\lambda}^c&\leq KN\sqrt{N}\bigl(\e\bigl\la|\tau_1^1|^4\bigr\ra_{\lambda}^c\bigr)^{1/2}.
	\end{align}
	Now combining \eqref{add:lem1:proof:eq1}, \eqref{add:lem1:proof:eq1.5}, \eqref{add:lem1:proof:eq5}, and \eqref{add:lem1:proof:eq6} together, for any $|\lambda|\leq 1/N^{1/4},$
	\begin{align*}
		\e\bigl\la|\tau_1^1|^4\bigr\ra_{\lambda}^c&\leq K(1+M/N)+K(\lambda\sqrt{1+M/N}+\lambda^2\sqrt{N})\bigl(\e\bigl\la|\tau_1^1|^4\bigr\ra_{\lambda}^c\bigr)^{1/2}\\
		&\leq K(1+M/N)+K'\sqrt{1+M/N}\bigl(\e\bigl\la|\tau_1^1|^4\bigr\ra_{\lambda}^c\bigr)^{1/2}
	\end{align*} 
	for some constant $K'$ independent of $N.$ This inequality readily implies that 
	\begin{align*}
		\e\bigl\la|\tau_1^1|^4\bigr\ra_{\lambda}^c&\leq K''\bigl(1+(M/N)^2\bigr).
	\end{align*} 
\end{proof}

\subsection{Concentration of the free energy}

We proceed to show that the auxiliary free energy $F_N^c(\lambda)$ is concentrated by using Subsection \ref{sec:momentcontrol}.

\begin{prop}\label{add:prop1}
	Assume that $\mathcal{D}$ is bounded. There exists a constant $K>0$ such that for any $N\geq 1$ and $|\lambda|\leq 1/N^{1/4}$, 
	\begin{align*}
		\e \bigl|F_N^c(\lambda)-\e F_N^c(\lambda)\bigr|&\leq \frac{K}{\sqrt{N}}.
	\end{align*}
\end{prop}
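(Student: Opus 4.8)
The plan is to prove the quantitative bound $\V{F_N^c(\lambda)}\le K/N$ and then conclude via Cauchy--Schwarz, since $\e|F_N^c(\lambda)-\e F_N^c(\lambda)|\le\sqrt{\V{F_N^c(\lambda)}}$. The natural first move is to split the variance by conditioning on the number of edges $M$,
\[
\V{F_N^c(\lambda)}=\e\bigl[\V{F_N^c(\lambda)\mid M}\bigr]+\V{\e[F_N^c(\lambda)\mid M]},
\]
and to control the two terms separately. Throughout I would use the boundedness of $\mathcal D$, say $|\zeta|\le c_0$ a.s., which lets me dominate the edge energies by coordinatewise squares, $(v_k^T\tau^\ell)^2\le p\,c_0^2\sum_{r=1}^p(\tau_{I(k,r)}^\ell)^2$.

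For the first term I would fix $M=m$ and run a Doob martingale that exposes the i.i.d.\ edge vectors $v_1,\dots,v_m$ one at a time, so that $\V{F_N^c(\lambda)\mid M=m}=\sum_{k=1}^m\e[D_k^2\mid M=m]$ for the corresponding martingale differences $D_k$. By the standard resampling (Efron--Stein-type) identity, $\e[D_k^2\mid M=m]\le\e[(F_N^c(\lambda)-\tilde F_N^{c,(k)}(\lambda))^2\mid M=m]$, where $\tilde F_N^{c,(k)}(\lambda)$ is the same free energy with $v_k$ replaced by an independent copy $v_k'$. Since the two Hamiltonians differ only by $\beta\sum_{\ell\le\kappa}[(v_k^T\tau^\ell)^2-((v_k')^T\tau^\ell)^2]$, a two-sided application of Jensen's inequality to $\tfrac1N\log(Z/\tilde Z)$ gives
\[
\bigl|F_N^c(\lambda)-\tilde F_N^{c,(k)}(\lambda)\bigr|\le\frac{\beta}{N}\Bigl(\bigl\la S_k\bigr\ra_\lambda^c+\bigl\la S_k\bigr\ra_\lambda^{c,\prime}\Bigr),\qquad S_k:=\sum_{\ell\le\kappa}\bigl[(v_k^T\tau^\ell)^2+((v_k')^T\tau^\ell)^2\bigr],
\]
where $\la\cdot\ra_\lambda^{c,\prime}$ is the Gibbs measure with $v_k$ replaced by $v_k'$. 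The key observation is that \emph{both} endpoint measures are again of the form $\la\cdot\ra_\lambda^c$ for an $m$-edge system with i.i.d.\ edge vectors (merely relabelled), so Lemma~\ref{add:lem1} applies to each. Bounding $S_k$ by $p\,c_0^2$ times a sum of at most $2\kappa p$ squared coordinates, expanding the square, using $\la(\tau_i^\ell)^2\ra^2\le\la(\tau_i^\ell)^4\ra$ together with symmetry among the spins, and finally invoking Lemma~\ref{add:lem1} — this is exactly where $|\lambda|\le N^{-1/4}$ enters — I would obtain $\e[D_k^2\mid M=m]\le KN^{-2}(1+(m/N)^2)$, hence $\V{F_N^c(\lambda)\mid M=m}\le KN^{-2}m(1+(m/N)^2)$. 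Taking expectation over $M\sim\mathrm{Poisson}(\alpha N)$ and using $\e M=\alpha N$ and $\e M^3=O(N^3)$ then gives $\e[\V{F_N^c(\lambda)\mid M}]\le K/N$.

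For the second term, set $\phi(m):=\e[F_N^c(\lambda)\mid M=m]$. Passing from an $m$-edge to an $(m{+}1)$-edge system adds the nonnegative term $\beta\sum_{\ell\le\kappa}(v_{m+1}^T\tau^\ell)^2$ to the Hamiltonian; monotonicity gives $\phi(m{+}1)\le\phi(m)$, while Jensen's inequality gives $\phi(m{+}1)\ge\phi(m)-\tfrac{\beta}{N}\e\la\sum_{\ell\le\kappa}(v_{m+1}^T\tau^\ell)^2\ra_\lambda^c$. Since the new edge is independent of the $m$-edge system, I would average over its uniform index and use Lemma~\ref{add:lem3} with $n=1$ to bound $\la\|\vec\tau\|^2\ra_\lambda^c$, obtaining $|\phi(m{+}1)-\phi(m)|\le KN^{-1}(1+m/N)$. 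Feeding this into the Poincar\'e inequality for the Poisson law, $\V{\phi(M)}\le(\alpha N)\,\e[(\phi(M{+}1)-\phi(M))^2]$, and using $\e(1+M/N)^2=O(1)$ yields $\V{\e[F_N^c(\lambda)\mid M]}\le K/N$. Adding the two bounds completes the proof. The main obstacle is not in this proposition itself but in its only nontrivial input, the fourth-moment estimate of Lemma~\ref{add:lem1}, whose validity is confined to the window $|\lambda|\le N^{-1/4}$ — which is precisely why the concentration statement is stated in that window rather than for a fixed $\lambda$. (One can avoid the Poisson Poincar\'e inequality by instead handling $M$ through a truncation to $\{M\le 2\alpha N\}$ plus a martingale over the potential edges, at the price of slightly heavier bookkeeping but with identical estimates.)
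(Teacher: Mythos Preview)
Your plan is essentially the paper's own proof: the same two-stage decomposition (fluctuation in $M$ via Lemma~\ref{add:lem3}, and fluctuation in the edge data conditional on $M$ via a Doob martingale together with Lemma~\ref{add:lem1}), merely packaged as a variance bound plus Cauchy--Schwarz and with the Poisson--Poincar\'e inequality in place of the paper's independent-copy argument for $M$. These are cosmetic differences.

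One small point to tighten. Your stated Jensen bound
\[
\bigl|F_N^c(\lambda)-\tilde F_N^{c,(k)}(\lambda)\bigr|\le\frac{\beta}{N}\bigl(\la S_k\ra_\lambda^c+\la S_k\ra_\lambda^{c,\prime}\bigr),
\qquad S_k=\sum_{\ell}\bigl[(v_k^T\tau^\ell)^2+((v_k')^T\tau^\ell)^2\bigr],
\]
contains the ``self'' terms $\la(v_k^T\tau^\ell)^2\ra_\lambda^c$ and $\la((v_k')^T\tau^\ell)^2\ra_\lambda^{c,\prime}$, in which the edge whose energy you are evaluating is \emph{part of} the Gibbs measure. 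For those terms the index $I(k,r)$ is correlated with the measure, so ``symmetry among the spins'' does not reduce $\e\la(\tau_{I(k,r)}^\ell)^4\ra_\lambda^c$ to the quantity bounded in Lemma~\ref{add:lem1}. The fix is free: the two-sided Jensen argument actually yields the sharper bound
\[
\bigl|F_N^c(\lambda)-\tilde F_N^{c,(k)}(\lambda)\bigr|\le\frac{\beta}{N}\Bigl(\bigl\la\textstyle\sum_\ell(v_k^T\tau^\ell)^2\bigr\ra_\lambda^{c,\prime}+\bigl\la\textstyle\sum_\ell((v_k')^T\tau^\ell)^2\bigr\ra_\lambda^{c}\Bigr),
\]
where in each summand the edge vector is independent of the Gibbs measure, and then your symmetry-plus-Lemma~\ref{add:lem1} step goes through verbatim. (The paper avoids the issue altogether by routing both comparisons through the $(m{-}1)$-edge system $\bar F_N^c(\lambda)$, which makes the independence manifest.)
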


The rest of this subsection establishes this proposition. Recall $\mathcal{I}$ from \eqref{add:eq22}. For any random variable or random vector $X,$ $\e_X$ stands for the expectation in $X$ only. 
Now, write by using the Jensen inequality,
\begin{align*}
	\e \bigl|F_N^c(\lambda)-\e F_N^c(\lambda)\bigr|&\leq \e \bigl|F_N^c(\lambda)-\e_M F_N^c(\lambda)\bigr|+\e\bigl|\e_MF_N^c(\lambda)-\e_M \e_{g,\mathcal{I}}F_N^c(\lambda)\bigr|\\
	&\leq \e \bigl|F_N^c(\lambda)-\e_M F_N^c(\lambda)\bigr|+\e \bigl|F_N^c(\lambda)-\e_{g,\mathcal{I}}F_N^c(\lambda)\bigr|.
\end{align*}
The proof of Proposition \ref{add:prop1} is completed by the following two lemmas. 

\begin{lemma}
	Assume that $\mathcal{D}$ is bounded.	There exists a constant $K>0$ such that for any $N\geq 1$ and $|\lambda|\leq 1/N^{1/4}$, 
	\begin{align*}
		\e\bigl|F_N^c(\lambda)-\e_M  F_N^c(\lambda)\bigr|\leq \frac{K}{\sqrt{N}}.
	\end{align*}
\end{lemma}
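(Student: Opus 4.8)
The plan is to read this off as a concentration of $F_N^c(\lambda)$ in the single Poisson variable $M$, obtained by comparison with an independent copy of $M$; for this piece only the second-moment estimate of Lemma~\ref{add:lem3} is needed. I would work conditionally on $(g,\mathcal I)$ --- equivalently, on the whole i.i.d.\ sequence of edge vectors $(v_k)_{k\ge1}$ --- so that $F_N^c(\lambda)$ becomes a deterministic function $\psi(M)$ and $\e_M F_N^c(\lambda)=\e[\psi(M)\mid g,\mathcal I]$. Let $M'$ be an independent copy of $M\sim\poi{\alpha N}$, independent of the $(v_k)$. Since, conditionally on $(g,\mathcal I)$, $\psi(M)-\e_M\psi(M)=\e_{M'}[\psi(M)-\psi(M')]$, Jensen's inequality gives $\e|F_N^c(\lambda)-\e_M F_N^c(\lambda)|\le\e|\psi(M)-\psi(M')|$. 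Writing $Z_m=\int e^{-\beta\sum_{\ell=1}^\kappa\sum_{k=1}^m (v_k^T\tau^\ell)^2+\lambda N Q(\vec\tau)}\eta_N(d\vec\tau)$, so $\psi(m)=\frac1N\log Z_m$, a telescoping along $m$ yields
\[
\bigl|\psi(M)-\psi(M')\bigr|\le\frac1N\sum_{k=M\wedge M'+1}^{M\vee M'}\bigl|\log(Z_k/Z_{k-1})\bigr|.
\]

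The crux is a bound on one increment. For each $k$, $Z_k/Z_{k-1}=\bigl\la e^{-\beta\sum_{\ell=1}^\kappa(v_k^T\tau^\ell)^2}\bigr\ra_{\lambda,k-1}^c\in(0,1]$, where $\la\cdot\ra_{\lambda,k-1}^c$ is the coupled Gibbs measure built from the first $k-1$ edges; hence, by Jensen, $\bigl|\log(Z_k/Z_{k-1})\bigr|\le\beta\sum_{\ell=1}^\kappa\bigl\la(v_k^T\tau^\ell)^2\bigr\ra_{\lambda,k-1}^c$. Since $\mathcal D$ is bounded (say $|g_{k,i}|\le c$), Cauchy--Schwarz over the $p$ nonzero coordinates of $v_k$ gives $(v_k^T\tau^\ell)^2\le\|v_k\|^2\sum_{r=1}^p(\tau^\ell_{I(k,r)})^2\le pc^2\sum_{r=1}^p(\tau^\ell_{I(k,r)})^2$. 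The key point is that $I(k,1),\dots,I(k,p)$ are uniform and independent of the first $k-1$ edges, so $\e\bigl\la(\tau^\ell_{I(k,r)})^2\bigr\ra_{\lambda,k-1}^c=\tfrac1N\e\bigl\la\|\tau^\ell\|^2\bigr\ra_{\lambda,k-1}^c$, and the proof of Lemma~\ref{add:lem3} (which is insensitive to the exact edge count) gives $\la\|\tau^\ell\|^2\ra_{\lambda,k-1}^c\le K(N+k)$. Combining, $\e\bigl|\log(Z_k/Z_{k-1})\bigr|\le C_0(1+k/N)$ for a constant $C_0$ depending only on $\alpha,\beta,\kappa,p,c$, uniformly in $|\lambda|\le1$.

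To assemble, conditioning on $(g,\mathcal I)$ makes each $|\log(Z_k/Z_{k-1})|$ measurable while $M,M'$ stay independent of it, so
\[
\e\Bigl[\sum_{k=M\wedge M'+1}^{M\vee M'}\bigl|\log(Z_k/Z_{k-1})\bigr|\Bigr]=\sum_{k\ge1}\e\bigl|\log(Z_k/Z_{k-1})\bigr|\,\P(M\wedge M'<k\le M\vee M')\le C_0\,\e\Bigl[\sum_{k=M\wedge M'+1}^{M\vee M'}\bigl(1+\tfrac{k}{N}\bigr)\Bigr].
\]
Using $\sum_{k=a+1}^{b}k=\tfrac12(b-a)(a+b+1)$ with $a=M\wedge M'$, $b=M\vee M'$, the last expectation is $C_0\bigl(\e|M-M'|+\tfrac1{2N}\e[|M-M'|(M+M'+1)]\bigr)$. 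As $M,M'$ are independent $\poi{\alpha N}$, $\e|M-M'|\le(\e(M-M')^2)^{1/2}=\sqrt{2\alpha N}$, and by Cauchy--Schwarz $\e[|M-M'|(M+M')]\le(\e(M-M')^2)^{1/2}(\e(M+M')^2)^{1/2}=O(N^{3/2})$. Hence the whole bound is $O(\sqrt N)$, and dividing by $N$ gives $\e|F_N^c(\lambda)-\e_M F_N^c(\lambda)|\le K/\sqrt N$, in fact uniformly in $|\lambda|\le1$.

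The only genuine obstacle is the increment estimate. Naively $\la(v_k^T\tau^\ell)^2\ra_{\lambda,k-1}^c\le\|v_k\|^2\la\|\tau^\ell\|^2\ra_{\lambda,k-1}^c$, and Lemma~\ref{add:lem3} lets the right side be as large as order $N$ for a worst-case edge; feeding that into the telescoping would only produce an unusable fluctuation bound of order $\sqrt N$. The resolution is that one must average over the \emph{fresh} uniform indices $I(k,\cdot)$ carried by the $k$-th edge \emph{first}, which converts $\la\|\tau^\ell\|^2\ra$ into the $O(1)$ per-spin average $\tfrac1N\la\|\tau^\ell\|^2\ra$, and only then invoke the second-moment bound. (One may equivalently package the argument through the Poisson Poincar\'e inequality $\mathrm{Var}(\psi(M)\mid g,\mathcal I)\le\alpha N\,\e[(\psi(M+1)-\psi(M))^2\mid g,\mathcal I]$ together with $\e|X-\e_M X|\le(\e\,\mathrm{Var}(X\mid g,\mathcal I))^{1/2}$; that variant needs the \emph{second} moment of $\log(Z_{M+1}/Z_M)$ and hence the fourth-moment bound of Lemma~\ref{add:lem1}.) By contrast, the companion estimate $\e|F_N^c(\lambda)-\e_{g,\mathcal I}F_N^c(\lambda)|\le K/\sqrt N$ genuinely requires Lemma~\ref{add:lem1} and the restriction $|\lambda|\le N^{-1/4}$.
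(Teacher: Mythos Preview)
Your proof is correct and follows essentially the same route as the paper's: introduce an independent copy $M'$ of $M$, use Jensen to reduce to $\e|\psi(M)-\psi(M')|$, bound the increment via Jensen on the exponential, exploit that the fresh indices $I(k,\cdot)$ are independent of the Gibbs measure built from earlier edges to turn $\la(\tau^\ell_{I(k,r)})^2\ra$ into the per-spin average $\tfrac1N\la\|\tau^\ell\|^2\ra$, and invoke Lemma~\ref{add:lem3}. The only cosmetic difference is that the paper bounds the whole block from $M$ to $\hat M$ in one step using the Gibbs measure with $\min(M,\hat M)$ edges, whereas you telescope edge by edge; your observation that this piece in fact holds for all $|\lambda|\le 1$ (needing only Lemma~\ref{add:lem3}, not Lemma~\ref{add:lem1}) is correct and implicit in the paper's argument as well.
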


\begin{proof}Let $\hat M$ be an independent copy of $M$. Let $\hat X_N$ and $\hat F_N^c(\lambda)$ be equal to $X_N$ and $F_N^c(\lambda)$ with the replacement of $M$ by $\hat M.$ Assume that $M\leq \hat M.$ Since
	$X_N\leq \hat X_N$, we have $\hat F_N^c\leq F_N^c.$ On the other hand, 
	\begin{align*}
		\hat F_N^c(\lambda)-F_N^c(\lambda)&=\frac{1}{N}\log \Bigl\la\exp\Bigl(-\beta\sum_{\ell=1}^\kappa\sum_{M< k\leq \hat M}\Bigl(\sum_{r=1}^p g_{I(k,r)}\tau_{I(k,r)}^\ell\Bigr)^2\Bigr)\Bigr\ra_{\lambda}^c\\
		&\geq -\frac{\beta}{N}\sum_{\ell=1}^\kappa\sum_{M<k\leq M'}\Bigl\la\Bigl(\sum_{r=1}^pg_{k,I(k,r)}\tau_{I(k,r)}^\ell\Bigr)^2\Bigr\ra_\lambda^c\\
		%	&\geq -\frac{\beta p}{N}\sum_{\ell=1}^\kappa\sum_{M<k\leq \hat M}\sum_{r=1}^pg_{k,I(k,r)}^2\bigl\la |\tau_{I(k,r)}^\ell|^2\big\ra_{\lambda}^c\\
		&\geq -\frac{K}{N}\sum_{\ell=1}^\kappa\sum_{M<k\leq \hat M}\sum_{r=1}^p\bigl\la |\tau_{I(k,r)}^\ell|^2\big\ra_{\lambda}^c,
	\end{align*}
	where the first inequality used the Jensen inequality and the second inequality used the boundedness of $\mathcal{D}.$ 
	Since  conditionally on $M$ and $\hat M$ with $M<\hat M$, $\{I(k,r)\}_{M<k\leq \hat M,1\leq r\leq p}$ is independent of  $\{I(k,r)\}_{1\leq k\leq M,1\leq r\leq p}$ that appear in $\la \cdot\ra_\lambda^c$,  taking expectation and using symmetry in the spins implies that on the event $M<\hat M,$
	\begin{align*}
		\e\bigl[\bigl|\hat F_N^c(\lambda)-F_N^c(\lambda)\bigr|\big|M,\hat M\bigr]&\leq \frac{K}{N}(\hat M-M)p\sum_{\ell=1}^\kappa\e\bigl[\bigl\la |\tau_{1}^\ell|^2\big\ra_{\lambda}^c\bigl|M,\hat M\bigr]\\
		&=\frac{K}{N^2}(\hat M-M)p\e\bigl[\bigl\la \|\vec\tau\|^2\big\ra_{\lambda}^c\bigl|M,\hat M\bigr].
	\end{align*}
	Since this inequality is also valid if $M\geq \hat M$ with the obvious replacement of $\hat M-M$ by $M-\hat M$, we conclude that after using Lemma \ref{add:lem3},
	$$
	\e\bigl[\bigl|\hat F_N^c(\lambda)-F_N^c(\lambda)\bigr|\big|M,\hat M\bigr]\leq \frac{K'}{N}|\hat M-M|\Bigl(1+\frac{M}{N}\Bigr)\leq \frac{K'}{N}|\hat M-M|\Bigl(1+\frac{M+\hat M}{N}\Bigr).
	$$
	It follows from the Jensen inequality and the Cauchy-Schwarz inequality that
	\begin{align*}
		\e\bigl|\e_MF_N^c(\lambda)-F_N^c(\lambda)\bigr|
		&\leq \e\bigl|\hat F_N^c(\lambda)-F_N^c(\lambda)\bigr| \\
		&\leq \frac{K'}{N}\bigl(\e |\hat M-M|^2\bigr)^{1/2}\Bigl(1+\frac{\bigl(\e M^2\bigr)^{1/2}+\bigl(\e \hat M^2\bigr)^{1/2}}{N}\Bigr).
	\end{align*}
	Since $\e M^2=(\alpha N)^2+\alpha N$ and $\e|\hat M-M|^2=2\alpha N,$ 
	plugging these into the last display completes our proof.
\end{proof}

The next lemma controls $\e \bigl|F_N^c(\lambda)-\e_{g,\mathcal{I}}F_N^c(\lambda)\bigr|^2.$

\begin{lemma}\label{add:lem0}Assume that $\mathcal{D}$ is bounded.
	There exists a constant $K>0$ such that for any $N\geq 1$ and $|\lambda|\leq 1/N^{1/4}$,
	\begin{align*}
		\e\bigl|F_N^c(\lambda)-\e_{g,\mathcal{I}} F_N^c(\lambda)\bigr|^2\leq \frac{K}{N}.
	\end{align*}
\end{lemma}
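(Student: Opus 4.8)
The plan is to prove this by a martingale‑difference argument in which the edges $v_1,\dots,v_M$ are revealed one at a time; the one nontrivial input is the fourth‑moment bound of Lemma~\ref{add:lem1}. Work conditionally on $M$ and set $\mathcal F_k:=\sigma(M,v_1,\dots,v_k)$, so that $F_N^c(\lambda)$ is $\mathcal F_M$‑measurable and $\e[F_N^c(\lambda)\,|\,\mathcal F_0]=\e_{g,\mathcal I}F_N^c(\lambda)$. With $D_k:=\e[F_N^c(\lambda)\,|\,\mathcal F_k]-\e[F_N^c(\lambda)\,|\,\mathcal F_{k-1}]$ we have $F_N^c(\lambda)-\e_{g,\mathcal I}F_N^c(\lambda)=\sum_{k=1}^M D_k$, hence by orthogonality of martingale increments $\e[\,|F_N^c(\lambda)-\e_{g,\mathcal I}F_N^c(\lambda)|^2\,|\,M]=\sum_{k=1}^M\e[\,|D_k|^2\,|\,M]$. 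Introducing for each $k$ an independent copy $v_k'$ of $v_k$ and letting $F_N^{c,(k)}(\lambda)$ denote $F_N^c(\lambda)$ with $v_k$ replaced by $v_k'$, one has $\e[F_N^{c,(k)}(\lambda)\,|\,\mathcal F_k]=\e[F_N^c(\lambda)\,|\,\mathcal F_{k-1}]$ because $v_k,v_k'$ are i.i.d.\ and independent of everything else, so $\e[\,|D_k|^2\,|\,M]\le\e[\,|F_N^c(\lambda)-F_N^{c,(k)}(\lambda)|^2\,|\,M]$; it thus suffices to bound the latter by $K(1+(M/N)^2)/N^2$.

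To do so, write $\Delta(\tau)=\beta(v_k^T\tau)^2-\beta(v_k'^T\tau)^2$ and let $\la\cdot\ra_\lambda^{c,(k)}$ be the Gibbs expectation attached to $F_N^{c,(k)}(\lambda)$. A direct computation gives $e^{N(F_N^c(\lambda)-F_N^{c,(k)}(\lambda))}=\la e^{-\sum_{\ell=1}^\kappa\Delta(\tau^\ell)}\ra_\lambda^{c,(k)}$ and $e^{N(F_N^{c,(k)}(\lambda)-F_N^c(\lambda))}=\la e^{\sum_{\ell=1}^\kappa\Delta(\tau^\ell)}\ra_\lambda^{c}$. Applying Jensen's inequality to each, together with $-\Delta(\tau)\ge-\beta(v_k^T\tau)^2$ and $-\Delta(\tau)\le\beta(v_k'^T\tau)^2$, yields
$$-\frac{\beta}{N}\Bigl\langle\sum_{\ell=1}^\kappa(v_k^T\tau^\ell)^2\Bigr\rangle_\lambda^{c,(k)}\leq F_N^c(\lambda)-F_N^{c,(k)}(\lambda)\leq\frac{\beta}{N}\Bigl\langle\sum_{\ell=1}^\kappa(v_k'^T\tau^\ell)^2\Bigr\rangle_\lambda^{c},$$
hence $|F_N^c(\lambda)-F_N^{c,(k)}(\lambda)|\le\tfrac{\beta}{N}\bigl(\la\sum_{\ell}(v_k^T\tau^\ell)^2\ra_\lambda^{c,(k)}+\la\sum_{\ell}(v_k'^T\tau^\ell)^2\ra_\lambda^{c}\bigr)$. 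The structural point that makes this usable is that $v_k$ does not appear in $\la\cdot\ra_\lambda^{c,(k)}$ and $v_k'$ does not appear in $\la\cdot\ra_\lambda^{c}$, so in each term the vector is independent of the Gibbs measure it is paired with, and that measure is, conditionally on $M$, exchangeable in the site labels $1,\dots,N$.

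Squaring, using $(\sum_{\ell=1}^\kappa a_\ell)^2\le\kappa\sum_\ell a_\ell^2$ and the Gibbs Jensen inequality $\la Y\ra^2\le\la Y^2\ra$, and then $(v_k^T\tau^\ell)^4\le C\sum_{r=1}^p(\tau_{I(k,r)}^\ell)^4$ (with $C=C(p,\mathcal D)$, by boundedness of $\mathcal D$) and likewise for $v_k'$, we reduce $\e[\,|F_N^c(\lambda)-F_N^{c,(k)}(\lambda)|^2\,|\,M]$ to a bounded number of terms of the form $\e[\la|\tau_i^\ell|^4\ra_\lambda^{c,(k)}\,|\,M]$ and $\e[\la|\tau_i^\ell|^4\ra_\lambda^{c}\,|\,M]$, where $i$ is a support index of $v_k$ or of $v_k'$. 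By the independence noted above, conditioning on $v_k$ (resp.\ $v_k'$) freezes $i$ to a deterministic value without changing the law of the Gibbs measure, and by site‑exchangeability each such term equals $\e[\la|\tau_1^\ell|^4\ra_\lambda^{c}\,|\,M]$, which by Lemma~\ref{add:lem1} is at most $K(1+(M/N)^2)$ since $|\lambda|\le N^{-1/4}$. Consequently $\e[\,|F_N^c(\lambda)-F_N^{c,(k)}(\lambda)|^2\,|\,M]\le K'(1+(M/N)^2)/N^2$, so $\e[\,|F_N^c(\lambda)-\e_{g,\mathcal I}F_N^c(\lambda)|^2\,|\,M]\le K'M(1+(M/N)^2)/N^2$, and taking expectations with $\e M=\alpha N$ and $\e M^3=O(N^3)$ for $M\sim\mathrm{Pois}(\alpha N)$ gives the claimed $O(1/N)$ bound. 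The genuinely delicate ingredient is the fourth‑moment control of $\tau_i^\ell$: for $\lambda$ of order one it is out of reach, and it is exactly the window $|\lambda|\le N^{-1/4}$ that makes Lemma~\ref{add:lem1}, and hence this whole estimate, go through.
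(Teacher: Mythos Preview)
Your proof is correct and follows essentially the same strategy as the paper: a martingale difference decomposition in the edges $v_1,\dots,v_M$ (conditionally on $M$), bounding each increment via a resampling inequality, and then reducing to the fourth-moment bound of Lemma~\ref{add:lem1}. The only cosmetic difference is that the paper routes the two-sided bound on $F_N^c-F_N^{c,(k)}$ through an auxiliary free energy $\bar F_N^c$ with the $k$-th edge \emph{deleted}, so that both sides are controlled by a single Gibbs measure $\la\cdot\ra_\lambda^-$ (with $M-1$ edges) in which neither $v_k$ nor $v_k'$ appears; you instead bound the two sides directly using $\la\cdot\ra_\lambda^{c,(k)}$ and $\la\cdot\ra_\lambda^{c}$ respectively, which is equally valid since each has the same conditional law as $\la\cdot\ra_\lambda^{c}$ given $M$.
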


\begin{proof}
	Let $(\tilde g_{1i})_{1\leq i\leq N}$ and $(\tilde I(1,r))_{1\leq r\leq p}$ be independent copies of $(g_{1i})_{1\leq i\leq N}$ and $(I(1,r))_{1\leq r\leq p}$, respectively. These are also independent of each other and everything else. Let $\tilde F_N^c(\lambda)$ be defined as $F_N^c(\lambda)$ except that we replace the component $$\sum_{\ell=1}^\kappa\Bigl(\sum_{r=1}^p g_{1,I(1,r)}\tau_{I(1,r)}^\ell\Bigr)^2$$
	in the Hamiltonian associated to $F_N^c(\lambda)$ by $$\sum_{\ell=1}^\kappa\Bigl(\sum_{r=1}^p\tilde g_{1,\tilde I(1,r)}\tau_{\tilde I(1,r)}^\ell\Bigr)^2.$$
	We claim that for any $M\geq 1,$
	\begin{align}\label{add:eq-12}
		\e \bigl[\bigl|F_N^c(\lambda)-\tilde F_N^c(\lambda)\bigr|^2\big|M\bigr]\leq  \frac{K}{N^2}\Bigl(1+\frac{M^2}{N^2}\Bigr).
	\end{align}
	To this end, let $\bar F_N^c(\lambda)$ be defined as $F_N^c(\lambda)$ except that we delete the component
	\begin{align*}
		%\label{add:eq-11}
		\sum_{\ell=1}^\kappa\Bigr(\sum_{r=1}^p g_{1,I(1,r)}\tau_{I(1,r)}^\ell \Bigr)^2
	\end{align*} from the Hamiltonian associated to $F_N^c(\lambda)$. Consequently, from the Jensen inequality,
	\begin{align*}
		0&\geq F_N^c(\lambda)-\bar F_N^c(\lambda)\geq -\frac{\beta}{N} \sum_{\ell=1}^\kappa\Bigl\la\Bigl(\sum_{r=1}^p g_{1,I(1,r)}\tau_{I(1,r)}^\ell\Bigr)^2\Bigr\ra_{\lambda}^-,\\
		0&\geq \tilde F_N^c(\lambda)-\bar F_N^c(\lambda)\geq -\frac{\beta}{N} \sum_{\ell=1}^\kappa\Bigl\la\Bigl(\sum_{r=1}^p\tilde g_{1,\tilde I(1,r)}\tau_{\tilde I(1,r)}^\ell\Bigr)^2\Bigr\ra_{\lambda}^-,
	\end{align*}
	where $\la \cdot\ra_\lambda^-$ is the Gibbs expectation corresponding to $\bar F_N^c(\lambda).$ Consequently, from the boundedness of $\mathcal{D}$ and Jensen's inequality,
	\begin{align*}
		\bigl|F_N^c(\lambda)-\tilde F_N^c(\lambda)\bigr|^2&\leq \frac{K}{N^2}\sum_{\ell=1}^\kappa\sum_{r=1}^p \bigl(\bigl\la |\tau_{I(1,r)}^\ell|^4\bigr\ra_\lambda^-+\bigl\la |\tau_{\tilde I(1,r)}^\ell|^4\bigr\ra_\lambda^-\bigr).
	\end{align*}
	From this, noting that $(I(1,j),\tilde I(1,j))_{1\leq j\leq p}$ do not appear in $\la \cdot\ra_\lambda^-$ and using the symmetry among the spins in each $\tau^\ell$ yield that whenever $M\geq 1,$
	\begin{align*}
		\e\bigl[\bigl|F_N^c(\lambda)-\tilde F_N^c(\lambda)\bigr|^2\big|M\bigr]&\leq \frac{K}{N^2}\sum_{\ell=1}^\kappa\sum_{r=1}^p \bigl(\e\bigl[\bigl\la |\tau_{1}^\ell|^4\bigr\ra_\lambda^-\big|M\bigr]+\e\bigl[\bigl\la |\tau_{1}^\ell|^4\bigr\ra_\lambda^-\big|M\bigr]\bigr).\\
		&\leq \frac{K'}{N^2}\Bigl(1+\Bigl(\frac{M-1}{N}\Bigr)^2\Bigr)\leq \frac{K'}{N^2}\Bigl(1+\frac{M^2}{N^2}\Bigr),
	\end{align*}
	where the last inequality used Lemma \ref{add:lem1}. This completes the proof of our claim.
	
	We now turn to the proof of our assertion. Consider the filtration $(\mathcal{F}_s)_{s\geq1 }$ defined as $\mathcal{F}_0=\{\emptyset,\Omega\}$ and for $s\geq 1$, $\mathcal{F}_s=\sigma(g_{k, I(k, r)}:1\leq k\leq s,1\leq r\leq p)$. Define
	\begin{align*}
		d_s=\e \bigl[F_N^c(\lambda)|\mathcal{F}_s,M\bigr]-\e \bigl[F_N^c(\lambda)|\mathcal{F}_{s-1},M\bigr].
	\end{align*} 
	Then $\sum_{k=1}^{M}d_k=F_N^c(\lambda)-\e_{g,\mathcal{I}}F_N^c(\lambda)$. Since $(d_s)_{1\leq s\leq M}$ is a martingale difference, it follows that
	\begin{align}\label{lem4:proof:eq1}
		\e\bigl[ \bigl|F_N^c(\lambda)-\e_{g,\mathcal{I}}F_N^c(\lambda)\bigr|^2\big|M\bigr]&=\sum_{k=1}^{M}\e\bigl[ d_k^2\big|M\bigr].
	\end{align}
	Finally, from Jensen's inequality and symmetry, we see that for any $1\leq k\leq M,$
	\begin{align*}
		%\label{add:lem2:proof:eq1}
		\e\bigl[ d_k^2\big|M\bigr]&\leq \e \bigl[\bigl|F_N^c(\lambda)-\tilde F_N^c(\lambda)\bigr|^2\big|M\bigr].
	\end{align*}
	Consequently, from this inequality, \eqref{lem4:proof:eq1}, and \eqref{add:eq-12},
	\begin{align*}
		\e\bigl[ \bigl|F_N^c(\lambda)-\e_{g,\mathcal{I}}F_N^c(\lambda)\bigr|^2\bigr]&\leq \frac{K'}{N^2}\e\Bigl[M\Bigl(1+\frac{M^2}{N^2}\Bigr);M\geq 1\Bigr]\leq \frac{K''}{N}.
	\end{align*}
\end{proof}

\begin{remark}
	\rm The proof of Lemma \ref{add:lem0} heavily relies on the fourth moment bound in Lemma \ref{add:lem1}. Although it is not needed, whether Lemma \ref{add:lem1} also holds for fixed $\lambda$ or can be extended to higher moments remains elusive. 
\end{remark}

\begin{remark}
	\rm In view of the proof of Proposition \ref{sec:conop:prop1}, it might seem like that one can also prove the concentration for $F_N$ in Theorem \ref{thm:concentration} by the same approach for Proposition \ref{add:prop1}. Although ideally this should be the case, we point out that the missing ingredient in doing so is  an upper bound for $\e \la |\sigma_1|^4\ra$ similar to the one for $\e \la |\tau_i^\ell|^4\ra_\lambda^c$ in Lemma \ref{add:lem1}. To explain the main obstacle, we recall that as $\sigma_1\sim N(\mu_1,A_{11}^{-1})$, we have $\la |\sigma_1|^4\ra=\mu_1^4+6\mu_1^2(A_{11}^{-1})^2+3(A_{11}^{-1})^4$. Since $\mu_1=h\sum_{i=1}^NA_{1i}^{-1}$, it  seems to be a very challenging task to show that $\e\mu_1^4$ is of order $O(1).$ 
\end{remark}

\subsection{Proof of Proposition \ref{sec:conop:prop1}}

We need the following lemma:

\begin{lemma}[Thermal concentration]
	\label{add:lem4}
	There exists a deterministic constant $K>0$ such that for any $|\lambda|\leq 1$ and $N\geq 1$,
	\begin{align*}
		\bigl\la\bigr|Q-\bigl\la Q\bigr\ra_\lambda^c\bigr|^2\bigr\ra_\lambda^c\leq \frac{K}{N}.
	\end{align*}
\end{lemma}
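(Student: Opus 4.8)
The plan is to realize $\la\cdot\ra_\lambda^c$ as a strictly log-concave probability measure on $\R^{\kappa N}$ and then invoke the Brascamp--Lieb inequality. Writing $\vec\tau=(\tau^1,\ldots,\tau^\kappa)\in\R^{\kappa N}$, the measure $\la\cdot\ra_\lambda^c$ has density proportional to $e^{-\Phi(\vec\tau)}$ with respect to Lebesgue measure, where
$$
\Phi(\vec\tau)=\tfrac12\|\vec\tau\|^2+\sum_{\ell=1}^\kappa X_N(\tau^\ell)-\lambda N Q(\vec\tau).
$$
First I would record a uniform lower bound on $\nabla^2\Phi$. The term $\frac12\|\vec\tau\|^2$ contributes $I$. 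Each $X_N(\tau^\ell)=\beta\sum_{k\le M}\bigl(\sum_r g_{k,I(k,r)}\tau^\ell_{I(k,r)}\bigr)^2$ is a nonnegative sum of squares of linear forms, hence convex, so $\sum_\ell X_N(\tau^\ell)$ contributes a positive semidefinite matrix. Finally, grouping coordinates by site, $\nabla^2 Q(\vec\tau)$ is block diagonal with $N$ blocks, the $i$-th being $\frac1N\Delta^2 f(\vec\tau_i)$ with $\vec\tau_i=(\tau^1_i,\ldots,\tau^\kappa_i)$, so \eqref{sec:conop:eq2} gives $\|\lambda N\nabla^2 Q(\vec\tau)\|_{\mathrm{op}}\le|\lambda|/4\le 1/4$ for $|\lambda|\le1$. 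Combining the three contributions,
$$
\nabla^2\Phi(\vec\tau)\ \succeq\ I-\tfrac14 I\ =\ \tfrac34 I,\qquad \forall\,\vec\tau\in\R^{\kappa N},
$$
uniformly in $N$ and in $|\lambda|\le1$; since moreover $X_N\ge 0$ and $|Q|\le1$, $\Phi$ grows at least quadratically, so $e^{-\Phi}$ is integrable and $\la\cdot\ra_\lambda^c$ is a genuine strictly log-concave measure.

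Next I would apply the Brascamp--Lieb variance inequality (see \cite{brascamp2002extensions} or \cite[Theorem 3.1.4]{Tal111}) to the smooth observable $Q$: since $\nabla^2\Phi\succeq\frac34 I$, we get $(\nabla^2\Phi)^{-1}\preceq\frac43 I$ and hence
$$
\bigl\la\bigl|Q-\bigl\la Q\bigr\ra_\lambda^c\bigr|^2\bigr\ra_\lambda^c=\mathrm{Var}_{\la\cdot\ra_\lambda^c}(Q)\le\bigl\la\langle(\nabla^2\Phi)^{-1}\nabla Q,\nabla Q\rangle\bigr\ra_\lambda^c\le\tfrac43\,\bigl\la\|\nabla Q\|^2\bigr\ra_\lambda^c.
$$
It remains to bound $\|\nabla Q\|^2$ deterministically. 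Since $\partial_{\tau^\ell_i}Q(\vec\tau)=\frac1N\partial_{x_\ell}f(\vec\tau_i)$, one has $\|\nabla Q(\vec\tau)\|^2=\frac1{N^2}\sum_{i=1}^N\|\nabla f(\vec\tau_i)\|^2\le\frac1N$, because the Lipschitz bound \eqref{sec:conop:eq1} forces $\|\nabla f\|\le1$ pointwise. Plugging in yields $\la|Q-\la Q\ra_\lambda^c|^2\ra_\lambda^c\le\frac{4}{3N}$, which is the claim with $K=4/3$.

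As for difficulty: the argument is short and there is no serious obstacle, because log-concavity is a structural feature of the quadratic Hamiltonian. The only points needing care are (i) verifying that the perturbation $-\lambda NQ$ does not destroy strict convexity of $\Phi$ --- this is precisely where the Hessian bound $-\frac14 I\le\Delta^2 f\le\frac14 I$ in \eqref{sec:conop:eq2} enters, together with $|\lambda|\le1$ --- and (ii) confirming the mild regularity/integrability hypotheses for Brascamp--Lieb, which follow from $X_N\ge0$, $|Q|\le1$, and the smoothness of $f$. Note that, in contrast with the quenched fourth-moment estimates of the previous subsection, this thermal concentration is valid for \emph{every} configuration of the disorder and every $|\lambda|\le1$, with a universal constant.
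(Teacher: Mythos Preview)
Your proposal is correct and follows essentially the same approach as the paper: verify that the Gibbs measure $\la\cdot\ra_\lambda^c$ is strictly log-concave via the Hessian bound \eqref{sec:conop:eq2} on $f$ together with the convexity of $X_N$, and then apply the Brascamp--Lieb inequality using the Lipschitz bound \eqref{sec:conop:eq1} to control $\|\nabla Q\|^2\le 1/N$. You are somewhat more explicit about the constant and the integrability check, but the argument is the same.
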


\begin{proof}
	From \eqref{sec:conop:eq1}, 
	\begin{align*}
		\bigl|Q(\tau^1,\ldots,\tau^\kappa)-Q(\hat\tau^1,\ldots,\hat \tau^\kappa)\bigr|&\leq \frac{1}{N}\sum_{i=1}^N\Bigl(\sum_{\ell=1}^\kappa\bigl|\tau_i^\ell-\hat\tau_i^\ell|^2\Bigr)^{1/2}\leq \frac{1}{\sqrt{N}}\Bigl(\sum_{\ell=1}^\kappa\|\tau^\ell-\hat\tau^\ell\|^2\Bigr)^{1/2}.
	\end{align*}
	Note that
	from \eqref{sec:conop:eq2}, for any $|\lambda|\leq 1,$
	$$
	\frac{\|x\|^2}{4}-\lambda f(x)
	$$
	is a convex function on $\mathbb{R}^\kappa$ so that for any $x,y\in \mathbb{R}^\kappa,$
	\begin{align*}
		\nonumber	&\frac{1}{2}\Bigl(\frac{1}{2}\|x\|^2 - \lambda f(x)\Bigr)+	\frac{1}{2}\Bigl(\frac{1}{2}\|y\|^2 - \lambda f(y)\Bigr)-\Bigl(\frac{1}{2}\Bigl\|\frac{x+y}{2}\Bigr\|^2 - \lambda f\Bigl(\frac{x+y}{2}\Bigr)\Bigr)\\
		\nonumber	& =\frac{1}{2}\Bigl(\frac{1}{4}\|x\|^2 -\lambda f(x)\Bigr)+	\frac{1}{2}\Bigl(\frac{1}{4}\|y\|^2 - \lambda f(y)\Bigr)\\
		&\qquad-\Bigl(\frac{1}{4}\Bigl\|\frac{x+y}{2}\Bigr\|^2 - \lambda f\Bigl(\frac{x+y}{2}\Bigr)\Bigr)+\frac{1}{4}\Bigl\|\frac{x-y}{2}\Bigr\|^2\\
		%\label{add:eq2}	
		&\geq\frac{1}{4}\Bigl\|\frac{x-y}{2}\Bigr\|^2.
	\end{align*}
	This together with the fact that $-\sum_{\ell=1}^\kappa X_N(\tau^\ell)$ is concave implies that the measure $\la \cdot\ra_\lambda^c$ is a strongly log-concave measure and consequently, from the Brascamp-Lieb inequality \cite[Theorem 3.1.4]{Tal111}, the assertion follows.
\end{proof}

We are ready to establish the proof of Proposition \ref{sec:conop:prop1}. For $N\geq 1,$ let $\lambda=1/N^{1/4}.$ 
Write
\begin{align*}
	\e	\bigl\la \bigl|Q-\e\bigl\la Q\bigr\ra^c\bigr|\bigr\ra^c&\leq 	\e\bigl\la \bigl|Q-\bigl\la Q\bigr\ra^c\bigr|\bigr\ra^c+	\e\bigl\la \bigl|\bigl\la Q\bigr\ra^c-\e\bigl\la Q\bigr\ra^c\bigr|\bigr\ra^c.
\end{align*}
Note that from Lemma \ref{sec:conop:eq1}, we readily see that $\e\bigl\la \bigl|Q-\bigl\la Q\bigr\ra^c\bigr|\bigr\ra^c\leq K/\sqrt{N}.$ To handle the second term, using the convexity of $F_N^c(\lambda)$ and noting that ${F_N^c}'(0)=\la Q\ra^c,$ we can bound (see, for example, \cite[Lemma 3.2]{Pan13}),
\begin{align*}
	\e \bigl|\la Q\ra^c-\e \la Q\ra^c\bigr|&\leq \frac{1}{\lambda}\bigl(\e|F_N^c(\lambda)-\e F_N^c(\lambda))|+\e|F_N^c(0)-\e F_N^c(0)|\\
	&\qquad +\e|F_N^c(-\lambda)-\e F_N^c(-\lambda)|\bigr)+ \E \la Q \ra_\lambda^c - \E \la Q\ra_{-\lambda}^c.
\end{align*}
Here, by Lemma \ref{add:lem4},
\begin{align*}
	\la Q\ra_\lambda^c-\la Q\ra_{-\lambda}^c&=N\int_{-\lambda}^\lambda \bigl\la Q\bigl(Q-\bigl\la Q\bigr\ra_t^c\bigr)\bigl\ra_t^cdt =N\int_{-\lambda}^\lambda \bigl\la \bigl(Q-\bigl\la Q\bigr\ra_t^c\bigr)\bigl(Q-\bigl\la Q\bigr\ra_t^c\bigr)\bigr\ra_t^cdt\notag\\
	&=N\int_{-\lambda}^\lambda \bigl\la \bigl(Q-\bigl\la Q\bigr\ra_t^c\bigr)^2\bigr\ra_t^cdt \notag\leq 2\lambda K.
\end{align*}
Putting these and using the concentration in Proposition \ref{add:prop1} together yields
\begin{align*}
	%\label{add:prop1:proof:eq3}
	\e \bigl|\la Q\ra^c -\e\la Q\ra^c\bigr|&\leq \frac{K'}{\lambda \sqrt{N}}+2\lambda K=\frac{K''}{N^{1/4}}.
\end{align*}
This completes our proof.

\section{Independence of local magnetizations}

Throughout this entire section, we still assume that $\mathcal{D}$ is bounded. Let $n\geq 1$ be fixed. Assume that $(Z_1,Z_2,\ldots,Z_n)$ is a weak limit of $$\bigl(\bigl\la (\sigma_1-\la\sigma_1\ra)^2\bigr\ra,\ldots,\bigl\la (\sigma_n-\la \sigma_n\ra)^2\bigr\ra\bigr)_{N\geq 1}=\bigl(A_{11}^{-1},\ldots,A_{nn}^{-1}\bigr)_{N\geq 1}$$ along a subsequence. Note the existence of this weak limit is ensured by the fact that $0\leq A_{11}^{-1},\ldots,A_{nn}^{-1}\leq 1$.

\begin{prop}\label{add:prop:indep}
	$(Z_1,\ldots,Z_n)$ are independent and identically distributed.
\end{prop}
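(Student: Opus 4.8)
The plan is to show that the empirical measure $\hat\mu_N:=\frac1N\sum_{i=1}^N\delta_{(A^{-1})_{ii}}$ of spin variances concentrates, along the chosen subsequence, around the deterministic law $\mathrm{Law}(Z_1)$, and then to deduce the asserted independence by a standard exchangeability comparison. The crucial observation is that, since $\tau\sim N(0,A^{-1})$ under $\langle\cdot\rangle^c$, for any bounded measurable $h:\mathbb{R}\to\mathbb{R}$ one has $\langle h(\tau_i)\rangle^c=m_h\bigl((A^{-1})_{ii}\bigr)$, where $m_h(v):=\E[h(\sqrt v\,Z)]$ with $Z\sim N(0,1)$; hence $\langle \frac1N\sum_i h(\tau_i)\rangle^c=\int m_h\,d\hat\mu_N$. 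Moreover $\bigl((A^{-1})_{11},\dots,(A^{-1})_{NN}\bigr)$ is exchangeable (the random quadratic form $X_N$ is invariant in law under coordinate permutations, so $A^{-1}\stackrel{d}{=}P_\pi A^{-1}P_\pi^{T}$), so $\E\hat\mu_N$ is the law of $(A^{-1})_{11}$, which converges weakly to $\mathrm{Law}(Z_1)$ along the subsequence.

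First I would feed suitable test functions into Proposition~\ref{sec:conop:prop1}. With $\kappa=1$ and $f=h_t$, $h_t(x):=\cos(tx)$ for $0<t\le 1/2$, we have $|h_t|\le1$, $|h_t'|\le t\le1$, $|h_t''|\le t^2\le 1/4$, so $h_t$ satisfies \eqref{sec:conop:eq1}--\eqref{sec:conop:eq2}. Since $m_{h_t}(v)=e^{-t^2v/2}$, writing $Q_t:=\frac1N\sum_i h_t(\tau_i)$ gives $\langle Q_t\rangle^c=\int e^{-t^2v/2}\,d\hat\mu_N(v)$, and Proposition~\ref{sec:conop:prop1} with Jensen's inequality yields $\E\bigl|\int e^{-t^2v/2}d\hat\mu_N-\E\int e^{-t^2v/2}d\hat\mu_N\bigr|\le \E\langle|Q_t-\E\langle Q_t\rangle^c|\rangle^c\le KN^{-1/4}$, with $K$ uniform over $t$. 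Letting $t$ vary over $(0,1/2]$ and using the subsequential convergence of $\E\hat\mu_N$, we obtain $\int e^{-sv}d\hat\mu_N\to\E[e^{-sZ_1}]$ in $L^1$ (hence in probability) for every $s\in(0,1/8]$. The linear span of $\{v\mapsto e^{-sv}:0<s\le1/8\}\cup\{1\}$ generates a dense subalgebra of $C([0,1])$ by Stone--Weierstrass, so, since $\hat\mu_N$ lives on the compact set $[0,1]$, a routine approximation gives $\int\varphi\,d\hat\mu_N\to\int\varphi\,d\nu$ in probability for all $\varphi\in C([0,1])$, where $\nu$ is the unique probability measure on $[0,1]$ with Laplace transform $s\mapsto\E[e^{-sZ_1}]$ on $(0,1/8]$. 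By injectivity of the Laplace transform, $\nu=\mathrm{Law}(Z_1)$; that is, $\hat\mu_N\to\mathrm{Law}(Z_1)$ weakly in probability along the subsequence.

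It remains to pass to the joint law. Fix $\psi_1,\dots,\psi_n\in C([0,1])$. Exchangeability of $\bigl((A^{-1})_{11},\dots,(A^{-1})_{NN}\bigr)$ together with the total-variation bound between sampling $n$ coordinates without and with replacement gives $\bigl|\E\prod_{i=1}^n\psi_i\bigl((A^{-1})_{ii}\bigr)-\E\prod_{i=1}^n\int\psi_i\,d\hat\mu_N\bigr|\le C_n/N$, with $C_n$ depending only on $n$ and $\max_i\|\psi_i\|_\infty$. Since $\int\psi_i\,d\hat\mu_N\to\E\psi_i(Z_1)$ in probability and the integrals are uniformly bounded, $\E\prod_{i=1}^n\int\psi_i\,d\hat\mu_N\to\prod_{i=1}^n\E\psi_i(Z_1)$. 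On the other hand $\bigl((A^{-1})_{11},\dots,(A^{-1})_{nn}\bigr)\Rightarrow(Z_1,\dots,Z_n)$ along the subsequence, so $\E\prod_{i=1}^n\psi_i\bigl((A^{-1})_{ii}\bigr)\to\E\prod_{i=1}^n\psi_i(Z_i)$. Hence $\E\prod_{i=1}^n\psi_i(Z_i)=\prod_{i=1}^n\E\psi_i(Z_1)$ for all bounded continuous $\psi_i$; taking all but one factor constant shows each $Z_i\stackrel{d}{=}Z_1$, and the product identity then says $Z_1,\dots,Z_n$ are independent. This proves the proposition.

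The heavy lifting sits in Proposition~\ref{sec:conop:prop1}, which rests on the delicate fourth-moment estimate of Lemma~\ref{add:lem1}; granting that, the two points that require a little care here are (i) using the Gaussianity of $\langle\cdot\rangle^c$ to replace the nonlinear Gibbs average $\langle h(\tau_i)\rangle^c$ by the explicit function $m_h\bigl((A^{-1})_{ii}\bigr)$ of the spin variance, and (ii) extracting a measure-determining family from the restricted class of test functions admissible in Proposition~\ref{sec:conop:prop1} --- the cosines $h_t$ do this, producing a dense-enough family of exponentials $e^{-sv}$ with $s$ in a short interval. The exchangeable sampling comparison is routine, so I do not expect any real obstacle beyond what Proposition~\ref{sec:conop:prop1} already handles.
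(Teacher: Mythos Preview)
Your proof is correct and takes a genuinely different route from the paper's.

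The paper proceeds via the multi-replica machinery: to show $\E\prod_{i=1}^n Z_i^{a_i}=\prod_{i=1}^n\E Z_i^{a_i}$, it takes $\kappa=b=\sum_i a_i$ replicas and test functions $\phi_i(x^1,\dots,x^b)=\prod_{\ell\in I_i}|(x^\ell\wedge r)\vee(-r)|^2$, smooths $\phi_i$ to meet the hypotheses of Proposition~\ref{sec:conop:prop1}, uses the overlap concentration to factorize $\E\langle\prod_i Q_i\rangle^c\approx\prod_i\E\langle Q_i\rangle^c$, and finally removes the truncation $r\to\infty$. Your argument instead stays at $\kappa=1$ and exploits the Gaussian structure of $\langle\cdot\rangle^c$ directly: for $h_t(x)=\cos(tx)$ one has $\langle h_t(\tau_i)\rangle^c=e^{-t^2(A^{-1})_{ii}/2}$, so the overlap concentration becomes concentration of the Laplace transform of the empirical variance measure $\hat\mu_N$, from which weak convergence of $\hat\mu_N$ to $\mathrm{Law}(Z_1)$ in probability follows by a density argument on $C([0,1])$. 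Independence then drops out of exchangeability via the standard with/without-replacement comparison. Your route is cleaner here---no truncation, no smoothing approximation, only one replica---precisely because the Gibbs measure is Gaussian, so $\langle h(\tau_i)\rangle^c$ is an explicit function of $(A^{-1})_{ii}$ alone. The paper's moment-factorization argument is less model-specific and would port more readily to diluted ST models with non-Gaussian Gibbs measures, where the single-replica average $\langle h(\tau_i)\rangle^c$ need not be a function of a scalar local parameter; your argument, by contrast, is tailored to the quadratic Hamiltonian but reaps a real simplification from it.
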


\begin{proof} By symmetry in the spins, $Z_1,\ldots,Z_n$ are obviously identically distributed. It remains to show that they are independent. Recall from the beginning of Section \ref{sec:conop}, we can write
	\begin{align*}
		\bigl\la (\sigma_i-\la\sigma_i\ra)^2\bigr\ra=\bigl\la |\tau_i|^2\bigr\ra^c.
	\end{align*}
	By a diagonalization process, we pass to a subsequence along which for any integer $r\geq 0,$ if $\tau_{r,i}:=(\tau_i\wedge r)\vee (-r)$, then $(\la  |\tau_{r,1}|^2\ra^c,\la |\tau_{r,2}|^2\ra^c,\ldots,\la |\tau_{r,n}|^2\ra^c)$ converges to some $(Z_{1,r},Z_{2,r},\ldots,Z_{n,r})$ weakly. For notational clarity, we shall assume that these convergences are valid in $N.$
	
	We claim that $(Z_{1,r},\ldots,Z_{n,r})$ are independent. For any $\ell\geq 1,$ denote $\tau_{r,i}^\ell=(\tau_i^\ell \wedge r)\vee (-r)$, where $\tau^\ell$ are i.i.d. samples from $\la \cdot\ra^c$. For integers $a_1,\ldots,a_n\geq 0$, let $I_s$ be the collection of integers in the interval $\bigl(\sum_{i=1}^{s-1}a_i,\sum_{i=1}^{s}a_i\bigr].$ Denote $b=\sum_{i=1}^na_i.$ Set \begin{align*}
		Q_i=	Q_i(\tau^1,\ldots,\tau^b)&=\frac{1}{N}\sum_{j=1}^N\phi_i(\tau_j^1,\ldots,\tau_j^b),
	\end{align*}
	where $\phi_i(x^1,\ldots,x^b):=\prod_{\ell\in I_i}|(x^\ell\wedge r)\vee  (-r)|^2$ for $x^1,\ldots,x^b\in \mathbb{R}$ and we adapt the tradition that $\prod_{\ell\in \emptyset}|(x^\ell\wedge r)\vee  (-r)|^2=1.$
	Note that since $\phi_i$ is a constant as long as $x^1,\ldots,x^b$ are all outside the interval $[-r,r],$ for any given $\varepsilon>0$, we can approximate $\phi_i$ uniformly, $\|\phi_i-\hat\phi_i\|_\infty<\varepsilon,$ by a smooth function $\hat\phi_i$, which is a constant whenever $x^1,\ldots,x^b$ are all outside the interval $[-r-1,r+1].$ This implies that $\hat\phi_i\in [-C_i,C_i]$, $\hat\phi_i$ is $C_i$-Lipschitz function,  and $-C_iI/4\leq \nabla^2\hat\phi_i\leq C_iI/4$ for some large constant $C_i.$ From these, $\hat\phi_i/C_i\in [-1,1]$ and it satisfies \eqref{sec:conop:eq1} and \eqref{sec:conop:eq2}. Consequently, from Proposition \ref{sec:conop:prop1}, if 
	$$
	\hat Q_i:=\frac{1}{N}\sum_{j=1}^N\hat\phi_i(\tau_j^1,\ldots,\tau_j^b),
	$$
	then
	\begin{align*}
		\e\bigl\la\bigl|\hat Q_i-\e\bigl\la \hat Q_i\bigr\ra^c\bigr|\bigr\ra^c&\leq \frac{C_iK}{N^{1/4}}
	\end{align*}
	for some constant $K$ depending on $\alpha,\beta,p,b.$ As a result,
	\begin{align*}
		\e\bigl\la\bigl|Q_i-\e\bigl\la Q_i\bigr\ra^c\bigr|\bigr\ra^c&\leq \frac{C_iK}{N^{1/4}}+2\varepsilon.
	\end{align*}
	Next using the symmetry among replicas and spins and this inequality, for fixed $\varepsilon>0$ and any large enough $N$,
	\begin{align*}
		\e \prod_{i=1}^n\bigl(\la |\tau_{r,i}|^2\ra^c\bigr)^{a_i}&=\e\Bigl\la \prod_{i=1}^n\prod_{\ell\in I_i} |\tau_{r,i}^\ell|^{2}\Bigr\ra^c=\e \Bigl\la \prod_{i=1}^n Q_i\Big\ra^c+o(1)
		\\
		&=\prod_{i=1}^n\e \bigl\la  Q_i\big\ra^c+o(1)+O(\varepsilon)=	\prod_{i=1}^n\e \bigl(\la |\tau_{r,i}|^2\ra^c\bigr)^{a_i}+o(1)+O(\varepsilon).
	\end{align*}
	Note that the first error $o(1)$ accounts for those terms in the expansion $\prod_{i=1}^n Q_i$ whose spin indices are not distinct and that the other two $o(1)$ errors are the same as the first. It follows that by using the dominated convergence theorem in the limit $N\to\infty$ and then sending $\varepsilon\downarrow 0$,
	\begin{align*}
		\e \prod_{i=1}^nZ_{i,r}^{a_i}&= \prod_{i=1}^n\e Z_{i,r}^{a_i}.
	\end{align*}
	Since this is valid for any integers $a_1,\ldots,a_n\geq 0$ and $Z_{1,r},\ldots,Z_{n,r}$ are bounded, we conclude that $Z_{1,r},\ldots,Z_{n,r}$ are independent of each other.
	
	The remaining step is to show that $Z_1,\ldots,Z_n$ are independent.
	Let $f_1,\ldots,f_n$ be bounded and Lipschitz functions on $\mathbb{R}$. Assume that their supremum norms and Lipschitz  constants are all bounded by $K_0$.  Write
	\begin{align*}
		\la \tau_i\ra^c -\la \tau_{r,i}\ra^c &=\la (\tau_i-r)\mathbb{I}(\tau_i\geq r)\ra^c +\la (\tau_i+r)\mathbb{I}(\tau_i\leq -r)\ra^c\\
		&=\la \mbox{sign}(\tau_i)(|\tau_i|-r)\mathbb{I}(|\tau_i|\geq r)\ra^c, 
	\end{align*}
	which implies that by the Cauchy-Schwarz inequality,
	\begin{align*}
		|\la \tau_i\ra^c -\la \tau_{r,i}\ra^c|\leq\sqrt{\la |\tau_i|^2\ra^c}\sqrt{\la \mathbb{I}(|\tau_i|\geq r)\ra^c }.
	\end{align*}
	Hence, 
	\begin{align*}
		\Bigl| \prod_{i=1}^nf_i(\la \tau_i\ra^c )-\prod_{i=1}^nf_i(\la \tau_{r,i}\ra^c )\Bigr|
		\leq K_0^n\sum_{i=1}^n\sqrt{\la |\tau_i|^2\ra^c }\sqrt{\la \mathbb{I}(|\tau_i|\geq r)\ra^c }.
	\end{align*}
	Consequently, by the symmetry among the spins, the Cauchy-Schwarz inequality, and then the Markov inequality,
	\begin{align*}
		\Bigl| \e\prod_{i=1}^nf_i(\la \tau_i\ra^c )-\e\prod_{i=1}^nf_i(\la \tau_{r,i}\ra^c )\Bigr|&\leq nK_0^n \bigl(\e \la |\tau_1|^2\ra^c \bigr)^{1/2}\bigl(\e\la \mathbb{I}(|\tau_1|\geq r)\ra^c\bigr)^{1/2}\\
		&\leq \frac{nK_0^n\e\la|\tau_1|^2\ra^c}{r}\leq  \frac{nK_0^n}{r},
	\end{align*}
	where the last inequality used the bound $\e\la|\tau_1|^2\ra^c=\e A_{11}^{-1}\leq 1$ since $A_{11}^{-1}\leq 1.$
	Sending $N$ to infinity and using the independence  of $Z_{1,r},\ldots,Z_{n,r}$, this inequality readily implies that
	$$
	\e \prod_{i=1}^nf_i(Z_i)=\prod_{i=1}^n\e f_i(Z_i).$$ Since this equation holds for all bounded Lipschitz functions $f_i,$ it follows that $Z_1,\ldots,Z_n$ are independent of each other and this completes our proof.
\end{proof}

\section{Convergence of the spin variance}\label{conv:spinvariance}

Let $T$ be the operator defined in \eqref{fixedpointeq}. For the reader's convenience, we recall that for any $\mu\in \mathcal{P}([0,1])$, $T(\mu)$ is defined as the distribution of 
\begin{align}\label{conv:spinvariance:eq1}
	\Bigl(1+\sum_{k=1}^{R}\frac{2\beta\zeta_k^2}{1+2\beta \sum_{r=1}^{p-1}X_{k,i}\xi_{k,r}^2}\Bigr)^{-1},
\end{align} 	
where $(\zeta_k)_{k\geq 1},(\xi_{k,r})_{k,r\geq 1}\stackrel{i.i.d.}{\sim} \mathcal{D}$, $(X_{k,r})_{k,r\geq 1}\stackrel{i.i.d.}{\sim}\mu$, $R$ is Poisson$(\alpha p)$, and these are all independent of each other. This section is devoted to establishing the one-dimensional case of Theorem \ref{thm:fixedpoint} assuming that $\mathcal{D}$ is bounded.

\begin{prop}\label{add:prop3}
	Assume that $\mathcal{D}$ is bounded. The law of the random variable $(A_N^{-1})_{NN}$ converges weakly to the unique solution of the distributional equation $T(\mu)=\mu.$
\end{prop}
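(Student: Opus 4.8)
The plan is to combine the tightness of $\bigl((A_N^{-1})_{NN}\bigr)_{N\ge 1}$ --- immediate from $0\le (A_N^{-1})_{NN}\le 1$ --- with a distributional recursion: it suffices to show that every subsequential weak limit $Z$ of $(A_N^{-1})_{NN}$ satisfies $Z\stackrel{d}{=}T(Z)$, since then, granting the uniqueness of the fixed point of $T$ (Lemma~\ref{add:lem9}), every such limit equals $\mu$ and the weak convergence follows. So I fix a subsequence along which $(A_N^{-1})_{NN}$ converges weakly to some $Z$, and pass to a further subsequence along which all the auxiliary limits below also exist.

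\textbf{An exact formula for $(A_N^{-1})_{NN}$.} Using the splitting property of the Poisson process, write in law $A_N=B_N+\tb\sum_{k\le R}w_kw_k^{T}$, where $B_N=I+\tb\sum_{k\le Q}u_ku_k^{T}$ collects the rank-one terms whose index set avoids $N$, $R\sim\mathrm{Poisson}(\alpha p)$ is independent of $B_N$, and $w_k=\sum_{r=1}^{p-1}\xi_{k,\bar I(k,r)}e_{\bar I(k,r)}+\zeta_k e_N$ with $\xi,\zeta\stackrel{i.i.d.}{\sim}\mathcal D$ and the $\bar I(k,\cdot)$ uniform over $(p-1)$-tuples of distinct indices in $\{1,\dots,N-1\}$. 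Since row and column $N$ of $B_N$ coincide with those of the identity, $(B_N^{-1})_{NN}=1$ and $(B_N^{-1})_{Nj}=0$ for $j\ne N$; inserting this into the Woodbury identity collapses $(A_N^{-1})_{NN}$ to $1-\tb\,\zeta^{T}\bigl(\mathrm{Id}_R+\tb\,W^{T}B_N^{-1}W\bigr)^{-1}\zeta$ with $\zeta=(\zeta_1,\dots,\zeta_R)$ and $W=[w_1\,|\,\cdots\,|\,w_R]$, and a Sherman--Morrison step applied to the rank-one summand $\zeta\zeta^{T}$ of $W^{T}B_N^{-1}W$ rewrites this exactly as an explicit function of the numbers $w_k^{T}B_N^{-1}w_l$.

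\textbf{Discarding off-diagonal resolvent entries.} Dropping from $W^{T}B_N^{-1}W$ the off-diagonal contributions $(B_N^{-1})_{\bar I(k,r),\bar I(k,s)}$ ($r\ne s$) and $(B_N^{-1})_{\bar I(l,r),\bar I(k,s)}$ ($k\ne l$) turns the exact formula into the right-hand side of \eqref{approx1}. To bound the error, I will use, as in the proof of Lemma~\ref{lem0}, that the off-diagonal entries of $B_N^{-1}$ are centered and that $\sum_j\bigl((B_N^{-1})_{ij}\bigr)^2=(B_N^{-2})_{ii}\le 1$, so that an off-diagonal entry evaluated at a pair of indices independent of $B_N$ is $O(N^{-1/2})$ in $L^2$; together with the elementary stability bound $\|G^{-1}-\hat G^{-1}\|\le\|G-\hat G\|$ for the inversion of $G=\mathrm{Id}_R+\tb W^{T}B_N^{-1}W$ and its diagonal-plus-rank-one approximant $\hat G$ (both dominate $\mathrm{Id}_R$, so their inverses have norm $\le 1$), and the finiteness of the moments of $R$, this gives an error of order $N^{-1/2}$ --- and here $\zeta$ drops out of $G-\hat G$, so the estimates decouple cleanly from $\zeta$. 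A second, analogous resolvent-identity argument, comparing $B_N$ with an independent copy $A_N'=B_N+\tb\sum_{k\le R'}w_k'(w_k')^{T}$ of $A_N$ built on the \emph{same} $B_N$, then lets me replace each $(B_N^{-1})_{\bar I(k,r),\bar I(k,r)}$ in \eqref{approx1} by $(A_N^{-1})_{\bar I(k,r),\bar I(k,r)}$ at the cost of a further $O(N^{-1/2})$ error, as recorded in \eqref{approx2}.

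\textbf{Passing to the limit and the main obstacle.} By permutation symmetry of $A_N^{-1}$, each $(A_N^{-1})_{\bar I(k,r),\bar I(k,r)}$ has the same law as $(A_N^{-1})_{NN}$ and so converges to $Z$ along the subsequence; the $(p-1)R$ indices $\bar I(k,r)$ are pairwise distinct with probability tending to one, so, conditionally on $R$, Proposition~\ref{add:prop:indep} (applied to a uniformly random set of distinct sites) shows these diagonal entries are asymptotically independent, while $(\zeta_k)$ and $(\xi_{k,\bar I(k,r)})$ are genuinely i.i.d.\ $\sim\mathcal D$ and independent of them. Letting $N\to\infty$ in \eqref{approx1}--\eqref{approx2} then yields $Z\stackrel{d}{=}\Bigl(1+\sum_{k\le R}\frac{\tb\,\zeta_k^2}{1+\tb\sum_{r=1}^{p-1}\xi_{k,r}^2 Z_{k,r}}\Bigr)^{-1}$ with the $Z_{k,r}$ i.i.d.\ copies of $Z$, i.e.\ $Z\stackrel{d}{=}T(Z)$, and uniqueness of the fixed point finishes the proof. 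The step I expect to be hardest is organizing the two layers of perturbation --- the Woodbury/Sherman--Morrison collapse, then the replacement $B_N^{-1}\to A_N^{-1}$ --- so that every discarded quantity is genuinely a ``random'' off-diagonal resolvent entry, independent of the matrix at which it is evaluated, and then pushing the $L^2$ estimates through the matrix inversion uniformly in the Poisson count $R$.
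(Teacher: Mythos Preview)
Your proposal is correct and follows essentially the same route as the paper: the same Poisson splitting $A_N=B_N+2\beta\sum_{k\le R}w_kw_k^{T}$, the same Woodbury/Sherman--Morrison collapse of $(A_N^{-1})_{NN}$ (the paper's Lemma~\ref{add:lem7}), the same $L^2$ control of random off-diagonal entries of $B_N^{-1}$ via $(B_N^{-2})_{ii}\le 1$ (Lemma~\ref{add:lem8}), the same device of an independent copy $A_N'$ built on the common $B_N$ to upgrade $B_N^{-1}$-diagonals to $A_N$-diagonals, and finally Proposition~\ref{add:prop:indep} plus the uniqueness Lemma~\ref{add:lem9}. One small wording point: in your ``passing to the limit'' paragraph you should keep the primes---you replace $(B_N^{-1})_{\bar I(k,r),\bar I(k,r)}$ by $(A_N'^{-1})_{\bar I(k,r),\bar I(k,r)}$, and only then invoke $A_N'\stackrel{d}{=}A_N$; this matters because the indices $\bar I(k,r)$ are independent of $A_N'$ but not of $A_N$.
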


The remaining of this section will establish this proposition. Let $Q$ and $R$ be two independent Poisson random variables  with mean $\alpha(N-p)$ and $\alpha p$, respectively. 
Consider
\begin{align*}
	(\hat I(k,1), \ldots \hat I(k,p))_{k \geq 1}&\,\,\mbox{i.i.d. uniform on $\{(i_1, \ldots, i_p): 1\leq i_r \leq N-1, \text{ all distinct}\}$},\\
	(\bar I(k,1), \ldots, \bar I(k,p-1))_{k \geq 1}&\,\,\mbox{i.i.d. uniform on $\{(i_1, \ldots, i_{p-1}): 1\leq i_r\leq N-1, \text{ all distinct}\}$}.
\end{align*}
In addition, let $(\hat \xi_{k,i})_{k \geq 1,1 \leq i \leq N-1}$, $(\zeta_k)_{k\geq 1},$ and $(\xi_{k,i})_{k\geq 1,1\leq i\leq N - 1}$ be i.i.d. sampled from $\mathcal{D}$. Assume that these are all independent of each other. Recall that $(e_i)_{1\leq i\leq N}$ is the standard basis of $\mathbb{R}^N$. Set the column vectors
\begin{align*}
	u_k&=\sum_{r=1}^p\hat \xi_{k,\hat I(k,r)}e_{\hat I(k,r)},\\
	w_k&=\sum_{r=1}^{p-1}\xi_{k,\bar I(k,r)} e_{\bar I(k,r)}+\zeta_k e_N.
\end{align*}
Using the thinning property of the Poisson random variable, we can write  
\begin{align}\label{add:eqdecomp}
	A\stackrel{d}{=} I + \tb\sum_{k \leq Q} u_ku_k^T + \tb\sum_{k \leq R}w_kw_k^T = B +  \tb\sum_{k \leq R}w_kw_k^T.
\end{align}
In words, we decompose $A$ into two components in distribution. The first, $B$, is a block matrix with the principal $(N-1)\times (N-1)$ block recording the entries of $A$ at indices belonging to the set $\{1, \ldots, N-1\}$ and it satisfies $B_{Ni} = B_{iN} = 0$ for all $1\leq i \leq N-1$ and $B_{NN} = 1$. The term $\sum_{k \leq R}w_kw_k^T$  accounts for the entries of $A$ whose indices are connected to the vertex $N.$ From \eqref{add:eqdecomp}, for the rest of this section,  we assume that
\begin{align}\label{add:eqdecomp:eq1}
	A= B +  \tb\sum_{k \leq R}w_kw_k^T.
\end{align}

\subsection{Some preliminary estimates}
Define the matrix $E = (E_{kl })_{1\leq k,l \leq R}$ as \begin{align*}
	E_{kl } = \begin{cases}
		\tb\sum_{1\leq r,s\leq p-1} \xi_{k, \bar I(k,r)}\xi_{l ,\bar I(l ,s)} B^{-1}_{\bar I(k,r), \bar I(l ,s)},  & \text{ if } k\neq l ,\\
		\tb\sum_{1\leq r\neq s \leq p-1}\xi_{k, \bar I(k,r)}\xi_{k,\bar I(k,s)} B^{-1}_{\bar I(k,r), \bar I(k,s)}, & \text{ if } k = l
	\end{cases}
\end{align*}
and let $\zeta = (\zeta_1,\ldots, \zeta_R)^T$. 

\begin{lemma}\label{add:lem7}
	There exists a constant $K$ independent of $N$ such that \begin{align*}
		\Bigl|A^{-1}_{NN} - \Bigl(1 + \sum_{k=1}^R \frac{\tb\zeta_k^2}{1 + \tb\sum_{r=1}^{p-1}\xi_{k,\bar I(k,r)}^2 B^{-1}_{\bar I(k,r),\bar I(k,r)}}\Bigr)^{-1}\Bigr| \leq K\|\zeta\|^2\|E\|.
	\end{align*}
\end{lemma}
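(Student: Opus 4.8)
The plan is to convert the rank-$R$ update $A=B+\tb\sum_{k\le R}w_kw_k^T$ into a closed scalar formula for $A^{-1}_{NN}$ via the Woodbury identity, to recognize the displayed approximand as the same formula with the off-diagonal matrix $E$ deleted, and then to absorb the whole discrepancy into a resolvent perturbation bounded by $\|E\|$.

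First I would set $W=[\,w_1\,|\,\cdots\,|\,w_R\,]$, so that $A=B+\tb\,WW^T$, and apply the Woodbury matrix identity to obtain $A^{-1}=B^{-1}-\tb\,B^{-1}W\bigl(I_R+\tb\,W^TB^{-1}W\bigr)^{-1}W^TB^{-1}$. The key feature of the block matrix $B$ (namely $B_{Ni}=B_{iN}=0$ for $i<N$ and $B_{NN}=1$) is that $B^{-1}e_N=e_N$, whence $e_N^TB^{-1}W=e_N^TW=\zeta^T$; reading off the $(N,N)$ entry collapses everything to
\[
A^{-1}_{NN}=1-\tb\,\zeta^T\bigl(I_R+\tb\,W^TB^{-1}W\bigr)^{-1}\zeta .
\]
Next I would identify the matrix in the middle. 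Writing $w_k=\bar w_k+\zeta_ke_N$ with $\bar w_k=\sum_{r=1}^{p-1}\xi_{k,\bar I(k,r)}e_{\bar I(k,r)}$ supported on $\{1,\dots,N-1\}$ and $\bar W$ the matrix with these columns, the relations $B^{-1}e_N=e_N$ and $\bar w_k^Te_N=0$ give $w_k^TB^{-1}w_l=\bar w_k^TB^{-1}\bar w_l+\zeta_k\zeta_l$; peeling the $r=s$ terms out of $\tb\,\bar w_k^TB^{-1}\bar w_k$ into the diagonal reproduces exactly the matrix $E$ of the statement, so that with $D=\mathrm{diag}(d_1,\dots,d_R)$ and $d_k:=1+\tb\sum_{r=1}^{p-1}\xi_{k,\bar I(k,r)}^2B^{-1}_{\bar I(k,r),\bar I(k,r)}$ one has $I_R+\tb\,W^TB^{-1}W=D+E+\tb\,\zeta\zeta^T$.

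With this decomposition I would note that $D\succeq I_R$ and $D+E=I_R+\tb\,\bar W^TB^{-1}\bar W\succeq I_R$, since $B^{-1}\succ0$; hence $\|D^{-1}\|\le1$ and $\|(D+E)^{-1}\|\le1$. Applying Sherman--Morrison to strip off the rank-one term $\tb\,\zeta\zeta^T$ yields the exact identity $A^{-1}_{NN}=\bigl(1+\tb\,\zeta^T(D+E)^{-1}\zeta\bigr)^{-1}$, and the same computation with $E$ removed shows that the approximand equals $\bigl(1+\tb\,\zeta^TD^{-1}\zeta\bigr)^{-1}$, which is precisely $\bigl(1+\sum_{k=1}^R\tb\zeta_k^2/d_k\bigr)^{-1}$. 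Since $(D+E)^{-1}$ and $D^{-1}$ are positive semidefinite, both denominators are $\ge1$, so
\[
\Bigl|A^{-1}_{NN}-\bigl(1+\tb\,\zeta^TD^{-1}\zeta\bigr)^{-1}\Bigr|\le \tb\,\bigl|\zeta^T\bigl((D+E)^{-1}-D^{-1}\bigr)\zeta\bigr| ,
\]
and the resolvent identity $(D+E)^{-1}-D^{-1}=-(D+E)^{-1}E\,D^{-1}$ together with the operator-norm bounds gives $\tb\,|\zeta^T((D+E)^{-1}-D^{-1})\zeta|\le\tb\,\|\zeta\|^2\|(D+E)^{-1}\|\,\|E\|\,\|D^{-1}\|\le\tb\,\|\zeta\|^2\|E\|$, which is the assertion with $K=\tb$.

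I do not expect a genuine obstacle here: the lemma is essentially careful bookkeeping around the Woodbury and Sherman--Morrison formulas. The two points to get right are the reduction $B^{-1}e_N=e_N$, which is what turns the $(N,N)$ entry into a scalar rational expression in $\zeta$ and the entries of $B^{-1}$, and the verification that the remainder left after extracting the diagonal of $\tb\,\bar W^TB^{-1}\bar W$ coincides on the nose with the matrix $E$ defined just before the statement.
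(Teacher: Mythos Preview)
Your proof is correct and follows essentially the same route as the paper: Woodbury to reduce $A^{-1}_{NN}$ to a quadratic form in $\zeta$ with the Gram matrix $D+E+\tb\,\zeta\zeta^T$, then Sherman--Morrison plus the resolvent identity to isolate the contribution of $E$. The only cosmetic difference is the order of the last two steps: the paper first applies the resolvent identity to peel off $E$ from $\tb\zeta\zeta^T+D+E$ and then Sherman--Morrison to $\tb\zeta\zeta^T+D$, whereas you first apply Sherman--Morrison to get the exact identity $A^{-1}_{NN}=(1+\tb\,\zeta^T(D+E)^{-1}\zeta)^{-1}$ and then the resolvent identity to $(D+E)^{-1}-D^{-1}$; both orderings yield the same bound with $K=\tb$.
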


\begin{proof}
	Let $W \in \R^{N\times R}$ be the matrix that records the vectors $(\sqrt{\tb}w_k)_{1\leq k\leq R}$ along the columns. Let $D \in \R^{R\times R}$ be a diagonal matrix whose $k$th diagonal entry is $1 + \tb\sum_{r=1}^{p-1}\xi_{k,\bar I(k,r)}^2 B^{-1}_{\bar I(k,r),\bar I(k,r)}$. From the identity \eqref{add:eqdecomp:eq1} and the Woodbury matrix identity, we write
	\begin{align*}
		A^{-1}_{NN} & = B^{-1}_{NN} - (B^{-1}W(I + W^TB^{-1}W)^{-1}W^TB^{-1})_{NN}.
	\end{align*}
	Noting the block structure of $B$, we obtain \begin{align*}
		A^{-1}_{NN}	& = 1 - \tb\zeta^T(I + W^TB^{-1}W)^{-1}\zeta\\
		& = 1 - \tb\zeta^T(\tb\zeta\zeta^T + D + E)^{-1}\zeta.
	\end{align*}
	The second line above follows from the definition of $D$ and $E$. Indeed, for $1\leq k,l \leq R$,\begin{align*}
	(I + W^TB^{-1}W)_{kl} & = \delta_{kl}+ \sum_{i,j=1}^N W_{ik}W_{jl}B^{-1}_{ij} = \delta_{kl}+ 2\beta\zeta_k\zeta_{l}+ \sum_{i,j=1}^{N-1}W_{ik}W_{jl}B^{-1}_{ij} \\
	&  = \delta_{kl}  +\tb\zeta_k\zeta_{l}+ 2\beta\sum_{1\leq r,s\leq p-1} \xi_{k, \bar I(k,r)}\xi_{l, \bar I(l,s)}B^{-1}_{\bar I(k,r)\bar I(l,s)}\\
	&  = \tb\zeta_k\zeta_l+D_{kl} + E_{kl},
\end{align*} where $\delta_{kl}$ is the Kronecker delta between $k$ and $l$ and  in the second equality we used that $B^{-1}_{Nj} = B^{-1}_{jN} = 0$ for $j \neq N$ and $B^{-1}_{NN} = 1$. Using the resolvent identity followed by the Sherman-Morrison formula, we further obtain \begin{align*}
		A^{-1}_{NN}	& = 1 - \tb\zeta^T(\tb\zeta\zeta^T + D)^{-1}\zeta +\Delta\\
		& = 1 - \tb\zeta^T\left(D^{-1} - \tb\frac{D^{-1}\zeta\zeta^TD^{-1}}{1 + \tb\zeta^T D^{-1}\zeta}\right)\zeta + \Delta\\
		& = \frac{1}{1 + \tb\zeta^TD^{-1}\zeta} + \Delta\\
		& = \left(1 + \sum_{k=1}^R \frac{\tb\zeta_k^2}{1 + \tb\sum_{r=1}^{p-1}\xi_{k,\bar I(k,r)}^2 B^{-1}_{\bar I(k,r),\bar I(k,r)}}\right)^{-1} +\Delta,
	\end{align*}
	where
	\begin{align*}
		\Delta&= \tb \zeta^T(\tb\zeta\zeta^T + D + E)^{-1}E(\tb\zeta\zeta^T + D)^{-1}\zeta.
	\end{align*}
	Note that the last term above can be bounded as 
	\begin{align*}
		|\Delta|&\leq K\|\zeta\|^2 \|(\tb\zeta\zeta^T + D + E)^{-1}\|\|E\|\|(\tb\zeta\zeta^T + D)^{-1}\|.
	\end{align*}
	The result follows since $\|(\tb\zeta\zeta^T + D + E)^{-1}\|\leq1$ and $ \|(\tb\zeta\zeta^T + D)^{-1}\| \leq 1$.
\end{proof}

The estimate in Lemma \ref{add:lem7} involves the main diagonal of $B^{-1}$, which is of constant order. The following lemma shows that the entries of $B^{-1}$ will be of order $O(1/\sqrt{N})$ if the random indices are not identically the same.

\begin{lemma}\label{add:lem8} 
	For any $N \geq 3$, there exists a constant $K$ independent of $N$ such that \begin{align*}
		\max_{\substack{{1\leq k \leq l}\\{1\leq r<s\leq p-1}}} \E\bigl[\bigl(B^{-1}_{\bar I(k,r), \bar I(k,s)}\bigr)^2 \big| R = l\bigr] \leq \frac{K}{N},& \quad \forall\; l \geq 1,\\
		\max_{\substack{{1\leq k <t\leq l}\\{1\leq r\leq s\leq p-1}}} \E\bigl[\bigl(B^{-1}_{\bar I(k,r), \bar I(t,s)}\bigr)^2 \big| R = l\bigr] \leq \frac{K}{N}, & \quad\forall\; l \geq 2.
	\end{align*}
\end{lemma}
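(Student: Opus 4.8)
The plan is to reduce both estimates to a single second-moment bound for a ``generic'' off-diagonal entry of $B^{-1}$, which itself follows from exchangeability and an elementary resolvent-trace identity. First I would exploit the block structure behind \eqref{add:eqdecomp:eq1}: since no edge of $B$ touches the vertex $N$ we have $(u_k)_N=0$ for every $k$, so
\[
B=\begin{pmatrix}\tilde B&0\\0&1\end{pmatrix},\qquad B^{-1}=\begin{pmatrix}\tilde B^{-1}&0\\0&1\end{pmatrix},
\]
where $\tilde B=I_{N-1}+\tb\sum_{k\le Q}u_ku_k^T$ is the principal $(N-1)\times(N-1)$ block. Because the indices $\hat I(k,\cdot)$ are i.i.d.\ uniform, the law of $\tilde B$ is invariant under conjugation by every permutation matrix of $\{1,\dots,N-1\}$. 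Since each $\bar I(k,r)$ lies in $\{1,\dots,N-1\}$, the quantity to be bounded is $(\tilde B^{-1})_{\bar I(k,r),\bar I(t,s)}^2$. I would also record at the outset that $R$, $(\bar I(k,\cdot))_{k\ge1}$, $(\xi_{k,\cdot})$ and $(\zeta_k)$ are all independent of $B$ (the matrix $B$ depends only on $Q$ and the edges avoiding $N$, and $Q$ and $R$ are independent by Poisson thinning), so conditioning on $R=l$ changes nothing and we may treat $B$ and the random indices as independent.

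The core estimate is that for every fixed $1\le a\ne b\le N-1$,
\[
\E\bigl[(\tilde B^{-1})_{ab}^2\bigr]\le\frac1{N-2},\qquad\text{whereas}\qquad 0\le(\tilde B^{-1})_{aa}\le1.
\]
To obtain it I would use $\tilde B\succeq I_{N-1}$, which gives $0\preceq\tilde B^{-2}\preceq I_{N-1}$ and hence $(\tilde B^{-2})_{aa}\le1$; together with the identity $(\tilde B^{-2})_{aa}=\sum_{j=1}^{N-1}(\tilde B^{-1})_{aj}^2$ this yields $\sum_{j\ne a}(\tilde B^{-1})_{aj}^2\le1$. Taking expectations and using the permutation invariance of the law of $\tilde B$ — so that $\E[(\tilde B^{-1})_{aj}^2]$ is the same for all $j\ne a$ and equals $\E[(\tilde B^{-1})_{ab}^2]$ — gives $(N-2)\,\E[(\tilde B^{-1})_{ab}^2]\le1$.

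Granting this, both claims follow by conditioning on the relevant indices. For the first bound, $r<s$ forces $\bar I(k,r)\ne\bar I(k,s)$, so conditioning on their two (distinct) values and using independence from $B$ gives $\E[(B^{-1}_{\bar I(k,r),\bar I(k,s)})^2\mid R=l]\le\frac1{N-2}$. For the second, with $k<t$ the indices $\bar I(k,r)$ and $\bar I(t,s)$ are independent and each uniform on $\{1,\dots,N-1\}$, so they coincide with probability $\frac1{N-1}$; on that event $B^{-1}_{\bar I(k,r),\bar I(t,s)}=(\tilde B^{-1})_{\bar I(k,r),\bar I(k,r)}\in[0,1]$, contributing at most $\frac1{N-1}$ to the second moment, while on the complement it is an off-diagonal entry contributing at most $\frac1{N-2}$ by the core estimate, for a total of $\frac1{N-1}+\frac1{N-2}$. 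Since for $N\ge3$ one has $\frac1{N-2}\le\frac3N$ and $\frac1{N-1}+\frac1{N-2}\le\frac6N$, both displayed maxima are at most $\frac KN$ with $K=6$, uniformly over the admissible $k,t,r,s$.

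I do not anticipate a genuine obstacle here; the only point requiring a little care is the bookkeeping — the block decomposition of $B$ and the independence of $R$ and $(\bar I(k,\cdot))$ from $B$ — which makes the conditioning on $R=l$ innocuous and reduces everything to the exchangeable matrix $\tilde B$.
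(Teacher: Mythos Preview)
Your proof is correct and follows the same core idea as the paper: both extract the off-diagonal second-moment bound $\E[(\tilde B^{-1})_{ab}^2]\le\frac{1}{N-2}$ from the resolvent-trace identity $(\tilde B^{-2})_{aa}=\sum_j(\tilde B^{-1})_{aj}^2$ together with permutation invariance, and both handle the first assertion by observing that $\bar I(k,r)\ne\bar I(k,s)$ a.s.\ and conditioning on the two distinct values.

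For the second assertion your route is slightly different and in fact cleaner. The paper introduces the event $\mathcal{E}_l$ that \emph{all} indices $\{\bar I(k,r):1\le k\le l,\,1\le r\le p-1\}$ are distinct, bounds $\P(\mathcal{E}_l^c\mid R=l)$ by a birthday-type estimate, and uses $|B^{-1}_{ij}|\le1$ on the complement. That estimate carries an implicit $l$-dependence in the constant (harmless in the application, where $l$ is integrated against a Poisson tail). You instead condition only on whether the two relevant indices $\bar I(k,r)$ and $\bar I(t,s)$ coincide, using that for $k\ne t$ they are independent uniforms on $\{1,\dots,N-1\}$; the collision probability is exactly $\frac{1}{N-1}$, and on collision the diagonal entry is in $[0,1]$. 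This yields an explicit constant $K=6$ genuinely uniform in $l$, and avoids the combinatorial bookkeeping of $\mathcal{E}_l$ entirely.
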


\begin{proof}
	Note that for $1\leq a \leq N-1$,\begin{align*}
		1 \geq (B^{-2})_{aa} = \sum_{b = 1}^{N-1}(B^{-1}_{ab})^2 \geq \sum_{\substack{1\leq b\leq N-1\\ b \neq a}} (B^{-1}_{ab})^2.
	\end{align*}
	By symmetry between the sites, for $1\leq b,b' \leq N-1$ and $b, b' \neq a$, we have\begin{align*}
		\E\bigl[(B^{-1}_{ab})^2 \big| R = l\bigr] = \E\big[(B^{-1}_{ab'})^2 \big| R=l\big].
	\end{align*}
	Hence, we obtain that for $1\leq b \leq N-1$ with $b \neq a$, \begin{align}\label{add:eq-13}
		\E\big[(B^{-1}_{ab})^2\big| R=l\big] \leq \frac{1}{N-2}.
	\end{align}
	Now, for any $1\leq k\leq l$ and $1\leq r<s\leq p-1$, we have \begin{align*}
		\E\bigl[\bigl(B^{-1}_{\bar I(k,r), \bar I(k,s)}\bigr)^2 \big| R = l\bigr]& = 2\sum_{1\leq a<b \leq N-1} \E\bigl[(B^{-1}_{ab})^2\big| R=l\bigr] \P\bigl(\bar I(k,r) = a, \bar I(k,s) = b\bigr)\\
		& \leq 2\cdot\frac{(N-1)(N-2)}{2}\cdot\frac{1}{N-2}\cdot\frac{2}{(N-1)(N-2)} \leq \frac{K}{N}.
	\end{align*}
	To prove the second assertion, let $\mathcal{E}_l$ be the event that the sites $\{\bar{I}(k,r) : 1\leq k\leq l, 1\leq r\leq p-1\}$ are all distinct and $R=l$ and $\mathcal{E}_l'=\mathcal{E}_l^c\cap\{R=l\}$. Then \begin{align}\label{add:eq-14}
		& \P\bigl(\mathcal{E}_l'\big|R = l\bigr)=
		1 - \prod_{k=0}^{l-1}\frac{{N-1-k(p-1)\choose p-1}}{{N-1\choose p-1}} \leq \frac{K}{N}.
	\end{align}
	It follows that for any $1\leq k <t \leq l$ and $1\leq r\leq s\leq p-1$, 
	\begin{align*}
		\E\bigl[\bigl(B^{-1}_{\bar I(k,r), \bar I(t,s)}\bigr)^2 \big| R = l\bigr] &= \E\bigl[\bigl(B^{-1}_{\bar I(k,r), \bar I(t,s)}\bigr)^2\mathbb{I}(\mathcal{E}_l)\big|R=l\bigr]+
		\E\bigl[\bigl(B^{-1}_{\bar I(k,r), \bar I(t,s)}\bigr)^2\mathbb{I}(\mathcal{E}_l')\big|R=l\bigr]\\
		&\leq \E\bigl[\bigl(B^{-1}_{1,2}\bigr)^2\big|R=l\bigr]+\P\bigl(\mathcal{E}_l'\big|R=l\bigr)\leq \frac{K'}{N},
	\end{align*}
	where the first inequality used symmetry among the sites and the fact $|B_{ij}^{-1}|\leq 1$, while the second inequality holds thanks to \eqref{add:eq-13} and \eqref{add:eq-14}.
\end{proof}

\subsection{Uniqueness of the fixed point solution}

The following lemma takes care of the uniqueness of the fixed point solution of $T$ stated in Proposition \ref{add:prop3}, where we do not need $\mathcal{D}$ to be bounded.

\begin{lemma}\label{add:lem9}
	The solution to the fixed point equation, $T(\mu)=\mu,$ is unique.
\end{lemma}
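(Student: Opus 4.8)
The plan is to prove uniqueness by passing to the conjugated operator $\mathcal{T} = \phi\circ T\circ\phi^{-1}$ with $\phi(x) = -\log x$, and showing that $\mathcal{T}$ is a strict contraction on $(\mathcal{P}_q(\mathbb{R}_+), W_q)$ for all sufficiently large $q$, where $W_q$ is the Wasserstein-$q$ distance. Since the map $\phi$ is a bijection between $\mathcal{P}([0,1])$ (or rather the sub-probability measures supported there) and a subset of $\mathcal{P}_q(\mathbb{R}_+)$, and since a fixed point of $T$ corresponds exactly to a fixed point of $\mathcal{T}$, it suffices to produce a unique fixed point for $\mathcal{T}$; completeness of $\mathcal{P}_q(\mathbb{R}_+)$ under $W_q$ together with the Banach fixed point theorem then finishes the argument. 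First I would record the explicit form of $\mathcal{T}$: if $Y \sim \nu$ with $Y = -\log X$, then $\mathcal{T}(\nu)$ is the law of
\begin{align*}
\log\Bigl(1 + \sum_{k=1}^R \frac{2\beta \zeta_k^2}{1 + 2\beta\sum_{r=1}^{p-1} e^{-Y_{k,r}}\xi_{k,r}^2}\Bigr),
\end{align*}
where $Y_{k,r}$ are i.i.d.\ copies of $Y$, the $\zeta_k,\xi_{k,r}$ are i.i.d.\ $\mathcal{D}$, and $R \sim \mathrm{Pois}(\alpha p)$, all independent.

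Next I would set up the coupling. Given two laws $\nu, \nu' \in \mathcal{P}_q(\mathbb{R}_+)$, take an optimal $W_q$-coupling $(Y_{k,r}, Y'_{k,r})$ of their marginals (used independently across $(k,r)$), and couple $R, (\zeta_k), (\xi_{k,r})$ identically. This produces a coupling of $\mathcal{T}(\nu)$ and $\mathcal{T}(\nu')$, so $W_q(\mathcal{T}(\nu), \mathcal{T}(\nu'))^q$ is bounded by the $q$-th moment of the difference of the two logarithms. The key deterministic estimate is that the map
\begin{align*}
(y_1,\dots,y_{p-1}) \mapsto \log\Bigl(1 + \sum_{k=1}^R \frac{2\beta\zeta_k^2}{1 + 2\beta\sum_{r=1}^{p-1} e^{-y_{k,r}}\xi_{k,r}^2}\Bigr)
\end{align*}
is Lipschitz with a small constant: since $e^{-y}$ has derivative of magnitude at most $e^{-y} \le 1$ and $\partial_{y_{k,r}}$ of the summand is controlled because the denominator $1 + 2\beta\sum_r e^{-y_{k,r}}\xi_{k,r}^2 \ge 1$ while the ratio $\frac{2\beta\zeta_k^2}{1+2\beta\sum e^{-y}\xi^2}$ is itself bounded by the quantity appearing in the outer $\log(1+\cdot)$, one finds that differentiating the whole expression gains a factor like $\frac{2\beta e^{-y}\xi^2}{1+2\beta\sum e^{-y}\xi^2} \le 1$ times $\frac{1}{1+\sum(\cdots)}$, and the chain-rule through the outer log contributes another $\frac{1}{1+\sum(\cdots)} \le 1$. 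Making this precise yields a bound of the form $|\text{difference}| \le C\sum_{k,r} \mathbf{1}(\text{edge }k\text{ active})\,|y_{k,r}-y'_{k,r}|$ with $C$ a fixed constant (depending on $\beta$ but not on $q$ or $N$), or more usefully a per-coordinate contraction after the outer log is exploited.

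Then I would take $q$-th moments. Using the triangle inequality in $L^q$ (Minkowski), conditioning on $R$, and the independence of the couplings across $(k,r)$, one gets
\begin{align*}
W_q(\mathcal{T}(\nu),\mathcal{T}(\nu'))^q \le \E\Bigl[\Bigl(C\sum_{k=1}^R\sum_{r=1}^{p-1}|Y_{k,r}-Y'_{k,r}|\Bigr)^q\Bigr],
\end{align*}
and the point is that the Poissonization makes the sum concentrate: writing $N_{\mathrm{terms}} = R(p-1)$, the right side is of order $\E[N_{\mathrm{terms}}^{q-1}] \cdot C^q \cdot W_q(\nu,\nu')^q$ up to constants, but the genuinely useful mechanism is that the outer logarithm caps the growth — the difference of logs is bounded by $1$ whenever the arguments are both $\ge 1$, so one interpolates: the difference is $\le \min(1, C\sum|Y-Y'|)$ pointwise, hence its $q$-th moment is at most $\E[(C\sum|Y-Y'|)^q \wedge (C\sum|Y-Y'|)] $, and for large $q$ the contribution from the region where $\sum|Y-Y'|$ is not small is suppressed. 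I expect the main obstacle is precisely making this "large $q$ beats the Poisson combinatorial factor" step rigorous: one must show that for $q$ large enough (depending on $\alpha, p, \beta$) the effective Lipschitz constant of $\mathcal{T}$ in $W_q$ — something like $C(p-1)(\E[R^{q}])^{1/q}/q$-type quantity after careful bookkeeping, or better, a bound exploiting that $\log$ is concave so the dependence on each $Y_{k,r}$ is damped — is strictly less than $1$. Concretely I would aim to show $W_q(\mathcal{T}(\nu),\mathcal{T}(\nu')) \le \rho_q\, W_q(\nu,\nu')$ with $\rho_q \to 0$ as $q\to\infty$, by bounding the per-edge log-difference by $\log(1 + 2\beta\zeta_k^2 \cdot 2\beta\xi_{k,r}^2 |e^{-Y}-e^{-Y'}|)$-type terms and using $|e^{-y}-e^{-y'}| \le |y-y'|$ together with $\log(1+t)\le t$ and Hölder, so that the final constant carries a favorable $1/q$-type or exponential-in-$q$ decay from the moment generating function of $R$ being finite. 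Once $\rho_q < 1$ for some $q$, Banach's theorem gives the unique fixed point in $\mathcal{P}_q(\mathbb{R}_+)$, and since any solution of $T(\mu)=\mu$ with $\mu\in\mathcal{P}([0,1])$ automatically has $\phi(\mu)\in\mathcal{P}_q(\mathbb{R}_+)$ (indeed $-\log X \ge 0$ and has finite moments of all orders because $X = (1+\text{nonneg})^{-1}$ with the nonnegative part having finite moments), uniqueness transfers back to $T$, completing the proof.
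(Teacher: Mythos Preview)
Your overall strategy matches the paper's: conjugate by $\phi(x)=-\log x$, work in $\mathcal{P}_q(\mathbb{R}_+)$ with the Wasserstein $q$-distance, couple optimally, and show $\mathcal{T}$ is a contraction for large $q$. But the heart of the argument is missing, and the mechanisms you propose in its place do not work.

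The paper's key step is a sharp, uniform bound on the $\ell^1$-norm of the gradient of
\[
g_l(y)=\log\Bigl(1+\sum_{k=1}^l\frac{\zeta_k^2}{\gamma+\sum_{r=1}^{p-1}\xi_{k,r}^2 e^{-y_{k,r}}}\Bigr),\qquad \gamma=(2\beta)^{-1},
\]
namely
\[
\sum_{k,r}\bigl|\partial_{y_{k,r}}g_l(y)\bigr|\le \frac{\chi_l}{\gamma+\chi_l},\qquad \chi_l:=\sum_{k=1}^l\zeta_k^2,
\]
which is strictly less than $1$ and, crucially, \emph{does not depend on $y$}. From this and the mean-value theorem, $|g_l(Y)-g_l(Y')|\le \frac{\chi_l}{\gamma+\chi_l}\max_{k,r}|Y_{k,r}-Y'_{k,r}|$, and taking $q$-th moments gives
\[
W_q^q(\mathcal{T}(\nu),\mathcal{T}(\nu'))\le \E\Bigl[\Bigl(\frac{\chi_R}{\gamma+\chi_R}\Bigr)^q R(p-1)\Bigr]\,W_q^q(\nu,\nu').
\]
Since $\frac{\chi_R}{\gamma+\chi_R}<1$ a.s.\ and the integrand is dominated by $R(p-1)\in L^1$, dominated convergence sends the bracket to $0$ as $q\to\infty$; pick $q$ large enough to make it $<1$.

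Your proposal never isolates this bound. Instead you write the Lipschitz estimate as $|\text{difference}|\le C\sum_{k,r}|Y_{k,r}-Y'_{k,r}|$ with a fixed constant $C$; after Minkowski this gives a factor of order $CR(p-1)$ in $W_q$, which is not contractive. Your attempted fixes are incorrect: the claim that ``the difference of logs is bounded by $1$ whenever the arguments are both $\ge 1$'' is false (take $\log 100-\log 1$), so the interpolation $\min(1,C\sum|Y-Y'|)$ is unjustified; and there is no ``$1/q$-type decay from the moment generating function of $R$'' at play here---the Poisson combinatorics only contribute the linear factor $R(p-1)$, which must be beaten by the $q$-th power of a quantity strictly less than $1$. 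That quantity is exactly $\chi_R/(\gamma+\chi_R)$, and finding it requires computing the derivative carefully and summing (the cancellation between numerator and the outer $\log$ is what produces a bound independent of the $\xi$'s and the $y$'s). Without that computation the contraction is not established.
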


\begin{proof} 
	Let $q\geq 1.$ Denote by $\mathcal{P}_q(\mathbb{R}_+)$ the collection of all $\nu\in \mathcal{P}(\mathbb{R}_+)$ with $\int x^q\nu(dx)<\infty.$ We equip this space with the Wasserstein $q$-distance defined as
	\begin{align*}
		W_q(\nu_1,\nu_2)&=\inf\bigl( \e\bigl|Z_1-Z_2\bigr|^q\bigr)^{1/q}
	\end{align*}
	for any $\nu_1,\nu_2\in \mathcal{P}_q(\mathbb{R}_+)$,
	where the infimum is taken over all joint random vectors $(Z_1,Z_2)$ with $Z_1\sim \nu_1$ and $Z_2\sim \nu_2.$ 	Denote $\gamma=(2\beta)^{-1}.$ Let $\phi(x) =  - \log x$. Understanding $\phi^{-1}(\nu)$ and $\phi(\mu)$ as the push-forward measures of $\mu\in \mathcal{P}([0,1])$ and $\nu\in \mathcal{P}_q(\mathbb{R}_+)$ under $\phi$ and $\phi^{-1}$ respectively, we define a self-map on $\mathcal{P}_q(\mathbb{R}_+)$ as $\mathcal{T} = \phi\circ T\circ \phi^{-1}$, namely, for any $\nu\in \mathcal{P}_q(\mathbb{R}_+)$, $\mathcal{T}(\nu)$ is the distribution of 
	$$\log\Bigl(1 + \sum_{k=1}^R\frac{\zeta_k^2}{\gamma + \sum_{r=1}^{p-1} \xi_{k,r}^2e^{-Y_{k,r}}}\Bigr),
	$$
	where $R\sim \text{Poisson}(\alpha p)$, $(\xi_{k,r})_{k,r}$ and $(\zeta_k)_k \stackrel{i.i.d.}{\sim} \mathcal{D}$, $Y_{k,r}\stackrel{i.i.d.}{\sim}\nu$, and these are all independent of each other. To see that $\mathcal{T}$ is a self-map, note that for all $q\geq 1$ and $\nu\in \mathcal{P}_q(\mathbb{R}_+),$ if $\nu'=\mathcal{T}(\nu)$, then there exists a constant $K>0$ such that
	\begin{align}\label{add:eq-22}
		\int x^q\nu'(dx)&\leq \e\log^q\Bigl(1+\sum_{k=1}^R\frac{\zeta_k^2}{\gamma}\Bigr)\leq K \e \sum_{k=1}^R\zeta_k^2\leq K\alpha p\e\xi_1^2<\infty.
	\end{align}
	
	We claim that $\mathcal{T}$ is a contraction as long as $q$ is large enough. To this end, for $l \geq 1$, define $g_l:\R_+^l \times \R_+^{p-1} \to \R_+$ by 
	$$g_l(y): = g_l\bigl((y_{k,r})_{k\leq l, r\leq p-1}\bigr) = \log\Bigl(1 + \sum_{k=1}^l\frac{\zeta_k^2}{\gamma + \sum_{r=1}^{p-1} \xi_{k,r}^2e^{-y_{k,r}}}\Bigr).$$ A direct computation gives 
	\begin{align*}
		\sum_{k,r}|\partial_{y_{k,r}}g_r(y)| 
		& = \frac{\sum_{k=1}^l  \frac{\zeta_k^2\Delta_k}{(\gamma +\Delta_k)^2}}{1 + \sum_{k=1}^l\frac{\zeta_k^2}{\gamma +\Delta_k}}\leq  \frac{\sum_{k=1}^l  \frac{\zeta_k^2}{\gamma +\Delta_k}}{1 + \sum_{k=1}^l\frac{\zeta_k^2}{\gamma +\Delta_k} }\leq \frac{\sum_{k=1}^l  \frac{\zeta_k^2}{\gamma}}{1 + \sum_{k=1}^l\frac{\zeta_k^2}{\gamma} }= \frac{\chi_l}{\gamma + \chi_l }
	\end{align*}
	for 
	$\Delta_k:=\sum_{r=1}^{p-1}\xi_{k,r}^2e^{-y_{k,r}}$ and 
	$\chi_l:=\sum_{k=1}^l \zeta_k^2.$
	Now for  two probability measures $\nu_1,\nu_2\in \mathcal{P}_q(\mathbb{R}_+)$,
	let $\nu_*$ be an optimal coupling between $\nu_1$ and $\nu_2$ under the Wasserstein $q$-distance, namely,
	$$
	W_q^q(\nu_1,\nu_2)=\int |x_1-x_2|^qd\nu_*.
	$$ 
	For each $k,r$, let $(Y_{k,r}, Y'_{k,r})$ be sampled independently from $\nu_*$ and be independent of other randomness. Let $Y: = ((Y_{k,r})_{k\geq 1, r\leq p-1})$ and $Y': = ((Y'_{k,r})_{k\geq 1, r\leq p-1})$. Then, by the mean-value theorem and the inequality above, we have for every $q\geq 1$,\begin{align*}
		\E |g_l(Y) - g_l(Y')|^q & \leq \E\sup_y\|\nabla g_l(y)\|_1^q \max_{k\leq l, r\leq p-1} \bigl|Y_{k,r} - Y'_{k,r}\bigr|^q \\
		& \leq \Bigl(\E  \sup_y \|\nabla g_l(y)\|_1^q\Bigr) l(p-1) W_q^q(\nu_1, \nu_2)\\
		& \leq \Bigl(\E \Bigl(\frac{\chi_l}{\gamma + \chi_l}\Bigr)^q \Bigr) l(p-1) W_q^q(\nu_1, \nu_2).
	\end{align*}
	Hence, we arrive at \begin{align*}
		W_q^q(\mathcal{T}(\nu_1), \mathcal{T}(\nu_2)) \leq \E  \Bigl[\Bigl(\frac{\chi_R}{\gamma + \chi_R}\Bigr)^q R(p-1)\Bigr] W_q^q(\nu_1, \nu_2).
	\end{align*}
	Here, choosing $q$ sufficiently large, the expectation on the right-hand side is strictly less than $1.$ Indeed, this can be seen by noting that  
	$$
	\Bigl(\frac{\chi_R}{\gamma + \chi_R}\Bigr)^q R(p-1)\leq R(p-1)
	$$
	and the left-hand side converges to zero a.s. as $q\uparrow \infty$ and applying the dominated convergence theorem.
	This completes the proof of our claim.
	
	Now we turn to the proof of the uniqueness of the fixed point of $T$. Assume that $\mu_1$ and $\mu_2$ are two distinct fixed points of $T.$ Note that from the definition of $T$ in \eqref{conv:spinvariance:eq1}, both $\mu_1$ and $\mu_2$ can not charge positive masses at $0.$ From this, the distributions $\nu_1$ and $\nu_2$ of $-\log X_1$ and $-\log X_2$ for $X_1\sim \mu_1$ and $X_2\sim \mu_2$ are probability distributions on $\mathbb{R}_+.$ Furthermore, in a similar manner as \eqref{add:eq-22}, $\e|\log X_1|^q<\infty$ and $\e|\log X_2|^q<\infty,$ which implies that $\nu_1,\nu_2\in \mathcal{P}_q(\mathbb{R}_+)$. Since now $\nu_1$ and $\nu_2$ are two distinct fixed points of $\mathcal{T}$, this contradicts the contractivity of $\mathcal{T}.$ Hence, the fixed point of $T$ must be unique. 
\end{proof}

\subsection{Proof of Proposition \ref{add:prop3}}

For convenience,  with a slight abuse of notation,  we shall use $T(X)$ to stand for the random variable whose law is given by the application of $T$ to the law of $X.$ In this notation, $T(\mu)=\mu$ is equivalent to $T(X)\stackrel{d}{=}X$ for $X\sim \mu.$

Let $(\bar J(k,1), \ldots, \bar J(k,p-1))_{k \geq 1}$ be an independent copy of $\{\bar I(k,1),\ldots,\bar I(k,p-1)\}_{k\geq 1}$ and $(\xi_{k,i}')_{k\geq 1,i\geq 1}$ and $(\xi'_k)_{k\geq 1}$ be i.i.d.\ copies of $\mathcal{D},$ and $R'$ be Poisson$(\alpha p).$  Assume that these are all independent of each other. Recall that $\{e_1,\ldots,e_N\}$ is the standard basis for $\mathbb{R}^N.$ Let 
$$
w'_k=\sum_{r=1}^{p-1}\xi'_{k,\bar J(k,r)} e_{\bar J(k,r)}+\zeta_k'e_N,
$$
Recall the decomposition of $A$ in \eqref{add:eqdecomp:eq1}.  Let
\begin{align*}
	A' =  B + \tb\sum_{k=1}^{R'} w_k'w_k'^T.
\end{align*}
Note that $A' \stackrel{d}{=} A$. Let $f:\R \to \R$ be a bounded Lipschitz function. Set
\begin{align*}
	U&=\Bigl(1 + \sum_{k=1}^R\frac{\tb\zeta_k^2}{1+\tb\sum_{r=1}^{p-1}\xi_{k,\bar I(k,r)}^2 B^{-1}_{\bar I(k,r), \bar I(k,r)}}\Bigr)^{-1},\\
	V&=\Bigl(1 + \sum_{k=1}^R\frac{\tb\zeta_k^2}{1+\tb\sum_{r=1}^{p-1}\xi_{k,\bar I(k,r)}^2 A'^{-1}_{\bar I(k,r), \bar I(k,r)}}\Bigr)^{-1}.
\end{align*}
We have \begin{align} 
	\begin{split}\label{add:prop3:eqn1}
		\bigl |\E f\bigl(A^{-1}_{NN}\bigr) - \E f\bigl(T(A'^{-1}_{NN})\bigr)\bigr|&\leq \|f'\|_\infty\E \bigl| A^{-1}_{NN} - U\bigr|+ \|f'\|_\infty\E \bigl|U  - V\bigr| \\
		&\qquad+ \bigl|\E f(V) - \E f(T(A'^{-1}_{NN}))\bigr|.
	\end{split}
\end{align}
We will now bound the three terms in \eqref{add:prop3:eqn1}. 

\smallskip

{\noindent \bf First term:} Recall the definition of $\mathcal{E}_l$ and $\mathcal{E}_l'$ from the proof of Lemma \ref{add:lem8}. From Lemma \ref{add:lem7}, $|A^{-1}_{NN}|\leq 1$, $|U|\leq 1,$ $\mathcal{E}_l\subseteq \{R=l\}$, and $\P(\mathcal{E}_l')\leq KN^{-1}\P(R=l)$ (thanks to \eqref{add:eq-14}), we can bound  \begin{align}\label{add:prop3:eqn2}
	\E \bigl| A^{-1}_{NN} - U\bigr|
	&=\sum_{l=0}^\infty\E \bigl[\bigl| A^{-1}_{NN} - U\bigr|\big|\mathcal{E}_l\bigr]\P(\mathcal{E}_l)+\sum_{l=0}^\infty\E \bigl[\bigl| A^{-1}_{NN} - U\bigr|\big|\mathcal{E}_l'\bigr]\P(\mathcal{E}_l')\notag\\
	& \leq K\sum_{l=0}^\infty \E \bigl[\|\zeta\|^2\|E\|\big| \mathcal{E}_l \bigr]\P(R=l) +\frac{2K}{N}\sum_{l=0}^\infty \P(R = l)  \notag\\
	&\leq  K'\sum_{l=0}^\infty \E \bigl[\|E\|\big| \mathcal{E}_l \bigr]\P(R=l) + \frac{2K}{N},
\end{align}
where the second inequality used the boundedness of $\mathcal{D}$. Using the definition of $E$ and letting $\|E\|_F$ to denote the Fr\"obenius norm of $E$, from the estimates in Lemma \ref{add:lem8}, we have \begin{align*}
	\E \bigl[\|E\|^2\big| \mathcal{E}_l\bigr] & \leq \E \bigl[\|E\|_F^2 \big| \mathcal{E}_l\bigr] \\
	&  = 4\beta^2\sum_{k\leq l}\E\Bigl[\Bigl(\sum_{1\leq r \neq s \leq p-1}\xi_{k, \bar I(k,r)}\xi_{k,\bar I(k,s)} B^{-1}_{\bar I(k,r), \bar I(k,s)}\Bigr)^2 \Big| \mathcal{E}_l\Bigr]\\
	& \qquad + 8\beta^2 \sum_{1\leq k < t \leq l}\E\Bigl[\Bigl(\sum_{1\leq r,s \leq p-1}\xi_{k, \bar I(k,r)}\xi_{k,\bar I(t,s)} B^{-1}_{\bar I(k,r), \bar I(t,s)}\Bigr)^2 \Big| \mathcal{E}_l\Bigr] \\
	&\leq \frac{Kl}{N} + \frac{Kl^2}{N}.
\end{align*}
Thus, from \eqref{add:prop3:eqn2} and noting that $R\sim\mbox{Poisson}(\alpha p),$ we obtain that \begin{align}\label{add:prop3:eqn3}
	\E \bigl| A^{-1}_{NN} - U\bigr| \leq \frac{K''}{\sqrt{N}}.
\end{align}

{\noindent \bf Second term:} Note that \begin{align*}
	& \E |U-V|\\
	& \leq \tb\E\Bigl|\sum_{k=1}^R  \zeta_k^2 \Bigl(\frac{1}{1+\tb\sum_{r=1}^{p-1}\xi_{k,\bar I(k,r)}^2 B^{-1}_{\bar I(k,r), \bar I(k,r)}} - \frac{1}{1+\tb\sum_{r=1}^{p-1}\xi_{k,\bar I(k,r)}^2 A'^{-1}_{\bar I(k,r), \bar I(k,r)}}\Bigr)\Bigr|\\
	& \leq 4\beta^2 \E \Bigl| \sum_{k=1}^R \sum_{r=1}^{p-1}\zeta_k^2\xi_{k, \bar I(k,r)}^2 \Bigl(B^{-1}_{\bar I(k,r), \bar I(k,r)} - A'^{-1}_{\bar I(k,r), \bar I(k,r)}\Bigr)\Bigr|\\
	& \leq K\E\Bigl|B^{-1}_{1,1} - A'^{-1}_{1,1}\Bigr|,
\end{align*}
where for the last line, we used the independence of the quantities $R$, the boundedness of $\zeta_k$ and $\xi_{k,i}$, and the symmetry between the sites $1, \ldots, N-1$. To proceed, we can use the resolvent identity to obtain
\begin{align*}
	\E\left|B^{-1}_{11} - A'^{-1}_{11}\right|& = \tb\E\Bigl|\sum_{k=1}^{R'}\sum_{r,s=1}^{p-1}\xi'_{k,\bar J(k,r)}\xi'_{k,\bar J(k,s)}B^{-1}_{1,\bar J(k,r)}A'^{-1}_{1,\bar J(k,s)}\Bigr|\\
	& \leq K\E\bigl|B^{-1}_{1,\bar J(1,1)}\bigr|\leq \frac{K'}{\sqrt{N}},
\end{align*}
where the first inequality used $|A_{ij}'^{-1}|\leq 1$ and the boundedness of $\mathcal{D}$ and the second inequality used \eqref{add:eq-13}. Therefore,
\begin{align} \label{add:prop3:eqn5}
	& \E |U-V| \leq \frac{K''}{\sqrt{N}}.
\end{align}
{\noindent \bf Third term:} Write
\begin{align*}
	\e f(V)&=\sum_{l=0}^\infty\e\bigl[f(V)\big|\mathcal{E}_l\bigr]\P(\mathcal{E}_l)+\sum_{l=0}^\infty\e\bigl[f(V)\big|\mathcal{E}_l'\bigr]\P(\mathcal{E}_l').
\end{align*}
Here, on one hand, from \eqref{add:eq-14}, $\P(\mathcal{E}_l')\leq K/N.$ On the other hand, note that on the event $\mathcal{E}_l$, the indices $\bar I(k,r)$ for $1\leq k\leq l$ and $1\leq r\leq p-1$ are all distinct. Let $L(k,r) = (k-1)(p-1) + r$ for $k \geq 1$ and $1\leq r \leq p-1$. Since $A'$ is independence of $\{\bar I(k,r):1\leq k\leq l,1\leq r\leq p-1\}$, $(\xi_{k,i})_{k,i\geq 1}$, and $(\zeta_k)_{k\geq 1}$, using the symmetry between the diagonal entries of $A'^{-1}$ leads to
\begin{align*}
	\e\bigl[ f(V)\big|\mathcal{E}_l\bigr]&=\e f\Bigl(\Bigl(1 + \sum_{k=1}^l\frac{\tb\zeta_k^2}{1+\tb\sum_{r=1}^{p-1}\xi_{k,L(k,r)}^2 A'^{-1}_{L(k,r), L(k,r)}}\Bigr)^{-1}\Bigr)\\
	&=\e f\Bigr(\Bigl(1 + \sum_{k=1}^l\frac{\tb\zeta_k^2}{1+\tb\sum_{r=1}^{p-1}\xi_{k,r}^2 A'^{-1}_{L(k,r), L(k,r)}}\Bigr)^{-1}\Bigr).
\end{align*}
Consequently, using these and noting that $\mathcal{E}_l\cup\mathcal{E}_l'=\{R=l\}$ yields 
\begin{align*}
	\Bigl|\e f(V)-\e f\Bigr(\Bigl(1 + \sum_{k=1}^R\frac{\tb\zeta_k^2}{1+\tb\sum_{r=1}^{p-1}\xi_{k,r}^2 A'^{-1}_{L(k,r), L(k,r)}}\Bigr)^{-1}\Bigr)\Bigr|\leq \frac{K'}{N}.
\end{align*}
Hence, we arrive at
\begin{align}
	&\bigl|\E f(V)- \E f\bigl(T(A'^{-1}_{NN})\bigr)\bigr|\notag\\
	& \leq \Bigl|\e f\Bigr(\Bigl(1 + \sum_{k=1}^R\frac{\tb\zeta_k^2}{1+\tb\sum_{r=1}^{p-1}\xi_{k,r}^2 A'^{-1}_{L(k,r), L(k,r)}}\Bigr)^{-1}\Bigr) -  \e f\bigl(T(A'^{-1}_{NN})\bigr) \Bigr|+ \frac{K'}{N}. \label{add:prop3:eqn7}
\end{align} 

To sum up, after combining \eqref{add:prop3:eqn1}, \eqref{add:prop3:eqn3}, \eqref{add:prop3:eqn5},  and \eqref{add:prop3:eqn7}, we obtain that for any bounded Lipschitz function $f$,
\begin{align*}
	&\bigl|\E f\bigl((A^{-1}_{N})_{N N}\bigr) - \E f\bigl(T((A'^{-1}_{N})_{NN})\bigr)\bigr| \\
	& \leq \Bigl|\e  f\Bigr(\Bigl(1 + \sum_{k=1}^R\frac{\tb\zeta_k^2}{1+\tb\sum_{r=1}^{p-1}\xi_{k,r}^2 A'^{-1}_{L(k,r), L(k,r)}}\Bigr)^{-1}\Bigr) -  \e f\bigl(T(A'^{-1}_{NN})\bigr) \Bigr|+ \frac{K''}{\sqrt{N}}.
\end{align*}
Now since $0\leq (A_N'^{-1})_{ii}\leq 1$ for all $1\leq i\leq N$ and $N\geq 1$, by a diagonalization procedure, we can assume without loss of generality that for each $l\geq 1,$ as $N$ tends to infinity,
\begin{align*}
	&\left((A'^{-1}_{N})_{11},\ldots,(A'^{-1}_{N})_{l(p-1), l(p-1)},(A_N'^{-1})_{NN}\right)\\
	 &\stackrel{d}{=}	\left((A^{-1}_{N})_{11},\ldots,(A^{-1}_{N})_{l(p-1), l(p-1)},(A_N^{-1})_{NN}\right)\\
	& \Rightarrow (X_1, \ldots, X_{l(p-1)},X)
\end{align*}
for some random vector $(X_1,\ldots,X_{l(p-1)},X).$ By symmetry among the sites and Proposition \ref{add:prop:indep},  $X_1,\ldots, X_{l(p-1)},X$ are independent and identically distributed.  Using the continuous mapping theorem, it follows that \begin{align*}
	&\lim_{N\to\infty}\Bigl|\e  f\Bigr(\Bigl(1 + \sum_{k=1}^R\frac{\tb\zeta_k^2}{1+\tb\sum_{r=1}^{p-1}\xi_{k,r}^2 A'^{-1}_{L(k,r), L(k,r)}}\Bigr)^{-1}\Bigr) -  \e f\bigl(T(A'^{-1}_{NN})\bigr) \Bigr|\\
	&=\Bigl|\E f\Bigl(\Bigl(1 + \sum_{k=1}^R\frac{\tb\zeta_k^2}{1+\tb\sum_{r=1}^{p-1}\xi_{k,r}^2 X_{L(k,r)}}\Bigr)^{-1}\Bigr) - \E f\bigl(T(X)\bigr)\Bigr|\\
	&=\bigl|\E f\bigl(T(X)\bigr)-\E f\bigl(T(X)\bigr)\bigr|=0.
\end{align*}
Since this holds for all Lipschitz $f,$ we conclude that $T(X) \stackrel{d}{=}X$ and Lemma \ref{add:lem9} implies that $X$ is the unique solution to the fixed point equation $T(X)\stackrel{d}{=}X$. Finally, since every weakly converging subsequence of $(A_N^{-1})_{NN}$ converges to the same limit, the entire sequence must converge weakly to the fixed point of $T(X)\stackrel{d}{=}X$ and this completes our proof.

\begin{remark}
	\rm Throughout the entire proof, the assumption on the boundedness of $\mathcal{D}$ is only used when we are applying Proposition \ref{add:prop:indep} to ensure that $X_1,\ldots, X_{l(p-1)},X$ are independent. The general case is handled in the last section where a truncation will be employed.
\end{remark}

\section{Proof of Theorems \ref{thm:fixedpoint} and \ref{thm:freeenergy} for bounded $\mathcal{D}$}

\begin{proof}[\bf Proof of Theorem \ref{thm:fixedpoint} (for $\mathcal{D}$ bounded)] Let $n\geq 1$ be fixed. Since the space of probability distributions on $[0,1]^n$ is compact, for any subsequence of $\bigl((A_N^{-1})_{11},\ldots,(A_N^{-1}\bigr)_{nn})_{N\geq 1}$, we can pass to a subsequence $N_l$ such that
	\begin{align*}
		\bigl((A_{N_l}^{-1})_{11},\ldots,(A_{N_l}^{-1}\bigr)_{nn})_{l\geq 1}
	\end{align*}
	converges weakly to some random vector $(X_1,\ldots,X_n).$ From Proposition \ref{add:prop:indep}, we readily see that $X_1,\ldots,X_n$ are i.i.d. copies of some random variable $X.$ Noting that $(A_{N_l}^{-1})_{11}\stackrel{d}{=}(A_{N_l}^{-1})_{N_lN_l}$, Proposition \ref{add:prop3} ensures that the distribution of $X$ is indeed the unique fixed point of $T.$ In other words, any convergent subsequence of $\bigl((A_N^{-1})_{11},\ldots,(A_N^{-1}\bigr)_{nn})_{N\geq 1}$ has the same limit $(X_1,\ldots,X_n).$ This implies the convergence of $\bigl((A_N^{-1})_{11},\ldots,(A_N^{-1}\bigr)_{nn})_{N\geq 1}$ with the desired limit.
\end{proof}

To establish the proof of Theorem \ref{thm:freeenergy}, we need a technical lemma.

\begin{lemma}\label{add:lem10}
	Let $\lambda >0$. Suppose that $M \sim \mbox{\rm Poisson}(\lambda)$ and $L|M \sim \mbox{\rm Unif}(\{0,\ldots, M\})$ and that $U \sim \mbox{\rm Unif}([0,1])$ and $L' |U\sim \mbox{\rm Poisson}(\lambda U)$. Then $L \stackrel{d}{=}L'$.
\end{lemma}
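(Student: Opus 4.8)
The plan is to avoid any computation with incomplete Gamma functions and instead couple $L$ and $L'$ on a common probability space, realizing both as functionals of a single Poisson point process. Let $N$ be a Poisson point process on $[0,1]$ with intensity $\lambda$, write $M=N([0,1])$ for its total mass, and let $U\sim\mathrm{Unif}([0,1])$ be independent of $N$. The object to focus on is $L^\ast:=N([0,U])$, the number of points of $N$ falling in $[0,U]$. I will show $L^\ast\stackrel{d}{=}L'$ and $L^\ast\stackrel{d}{=}L$, which gives the claim.

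For the first identity, since $U$ is independent of $N$, conditioning on $\{U=u\}$ does not alter the law of $N$, so $L^\ast\mid\{U=u\}=N([0,u])\sim\mathrm{Poisson}(\lambda u)$ by the defining property of the Poisson process; integrating over $u\in[0,1]$ reproduces exactly the law of $L'$.

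For the second identity I would condition on $\{M=m\}$: the $m$ atoms of $N$ are then i.i.d.\ $\mathrm{Unif}([0,1])$, and together with the independent $U$ we have $m+1$ i.i.d.\ continuous random variables, hence almost surely distinct. By exchangeability, $U$ is equally likely to be the $j$-th smallest of these $m+1$ variables for each $j\in\{1,\dots,m+1\}$, and on that event $L^\ast=N([0,U])=j-1$; therefore $L^\ast\mid\{M=m\}\sim\mathrm{Unif}(\{0,\dots,m\})$, which is precisely the conditional law of $L$ given $M=m$. Since $M\sim\mathrm{Poisson}(\lambda)$, this yields $L^\ast\stackrel{d}{=}L$, and combining the two identities finishes the proof.

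There is no real obstacle here; the only step requiring a word of justification is the exchangeability argument, which needs nothing beyond the $m+1$ variables being i.i.d.\ and atomless. If one prefers a bare-hands verification, one can instead check directly that for every integer $k\ge0$,
$$\P(L'=k)=\frac{\lambda^k}{k!}\int_0^1 u^k e^{-\lambda u}\,du=\frac{e^{-\lambda}}{\lambda}\sum_{n\ge k+1}\frac{\lambda^n}{n!}=\sum_{m\ge k}e^{-\lambda}\frac{\lambda^m}{m!}\cdot\frac{1}{m+1}=\P(L=k),$$
the middle equality being the standard closed form for $\int_0^\lambda t^k e^{-t}\,dt$ obtained by $k$-fold integration by parts.
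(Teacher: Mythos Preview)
Your proof is correct. The coupling via the Poisson point process is a genuinely different route from the paper's argument: the paper simply writes out
\[
\P(L=l)=\frac{e^{-\lambda}}{\lambda}\sum_{k\ge l+1}\frac{\lambda^k}{k!},\qquad \P(L'=l)=\int_0^1\frac{(\lambda u)^l}{l!}e^{-\lambda u}\,du,
\]
and matches them by $l$-fold integration by parts---precisely the ``bare-hands verification'' you tack on at the end. Your main argument, realizing both variables as $N([0,U])$ for a rate-$\lambda$ Poisson process $N$ on $[0,1]$ and an independent uniform $U$, is more conceptual: it explains \emph{why} the identity holds rather than merely checking it, and it sidesteps the incomplete-Gamma computation entirely. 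The exchangeability step (the rank of $U$ among the $m+1$ i.i.d.\ uniform points being uniform on $\{1,\dots,m+1\}$) is standard and needs no further justification. The paper's approach is shorter to write down but less illuminating; yours would transplant more readily to variants of the lemma.
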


\begin{proof}
	For any integer $l \geq 0$, we have \begin{align*}
		\P(L= l) & = \sum_{k=l}^\infty \P(L = l | M = k)\P(M=k) = \sum_{k=l}^\infty\frac{1}{k+1}\frac{\lambda^k}{k!} e^{-\lambda} = \frac{e^{-\lambda}}{\lambda}\sum_{k=l+1}^\infty \frac{\lambda^k}{k!},\\
		\P(L' = l) & = \int_0^1 \P(L' = l | U = u)du = \int_0^1 \frac{(\lambda u)^l}{l!} e^{-\lambda u} du.
	\end{align*}
	Here, one can match these two quantities directly by using integration by parts to the last integral for $l$ many times.
	
\end{proof}

\begin{proof}[\bf Proof of Theorem \ref{thm:freeenergy} (for $\mathcal{D}$ bounded)]
	For any $x\in (0,1]$, denote by $X(x)$ the random variable associated to $\mu(\alpha x)$, the fixed point of $T$ associated to the Poisson rate $\alpha x.$
	Recall the identity \eqref{add:eq1} for $F_N.$ From Lemma \ref{lem0}, Theorem \ref{thm:concentration}, and Proposition \ref{add:prop3}, we readily see that
	\begin{align*}
		\limsup_{N\to\infty}	\E \Bigl|\frac{1}{N}\sum_{i,j=1}^NA_{ij}^{-1} -\e X(1)\Bigr| = 0
	\end{align*}
	and
	\begin{align*}
		\limsup_{N\to\infty}\E\Bigl|\frac{1}{N}\log \det A-\frac{1}{N}\e \log \det A\Bigr|=0.
	\end{align*}
	For the remainder of the proof, we handle $$\lim_{N\to\infty}\frac{1}{N}\e\log \det A.$$ Let $S_0 =I$ and for $l\geq 1$, define \begin{align*}
		S_l = I + \tb\sum_{k \leq l}v_kv_k^T.
	\end{align*}
	Thus, $A = S_M$. Write 
	\begin{align*}
		\frac{1}{N}\E  \log \det A & = \frac{1}{N}\E \sum_{l=1}^M \log \frac{\det S_l}{\det S_{l-1}}= \frac{1}{N}\E \sum_{l=1}^M \log \frac{\det \bigl(S_{l-1} + \tb v_l v_l^T\bigr)}{\det S_{l-1}}   \\
		& = \frac{1}{N}\E \sum_{l=1}^M \log \bigl(1 + \tb v_l^T S_{l-1}^{-1}v_l\bigr),
	\end{align*}
	where the last equality used the matrix-determinant lemma. Let $v$ be an $N$-dimensional column vector whose first $p$ entries are $g_1,\ldots,g_p\stackrel{i.i.d.}{\sim}\mathcal{D}$ and the rest are all zero.  Assume that $v$ is independent of all other randomness. We continue to write the last term in the previous display as
	\begin{align*}
		& \frac{1}{N}\sum_{m=0}^\infty \P (M = m)\sum_{l=0}^{m-1}\E \log\bigl(1 + \tb v_{l}^T S_l^{-1}v_l\bigr)\notag\\
		& = \frac{1}{N}\sum_{m=0}^\infty \P (M = m)\sum_{l=0}^{m}\E \log\bigl(1 + \tb v_{l}^T S_l^{-1}v_l\bigr)+ O\Bigl(\frac{1}{N}\Bigr)\\
		& = \frac{1}{N}\sum_{m=0}^\infty \P (M = m)\sum_{l=0}^{m}\E \log\bigl(1 + \tb v^T S_l^{-1}v\bigr)+ O\Bigl(\frac{1}{N}\Bigr)\\
		& = \E \Bigl[\frac{M+1}{N}\E\Bigl[\log\bigl(1 + \tb v^TS_{L}^{-1}v\bigr)\Big| M\Bigr]\Bigr] + O\Bigl(\frac{1}{N}\Bigr), 
	\end{align*}
	where the first equality holds since $\|S_l^{-1}\| \leq 1$ for all $l$, the second equality used the fact that $v_l$ is independent of $S_l$ and the symmetry among the sites of $v_l$, and in the third equality, $L$ depends only on $M$ with the conditional law $L|M\sim \mbox{Unif}(\{0, \ldots, M\})$. 
	
	Next,  From the Cauchy-Schwarz and Jensen inequalities, $M \sim \text{Poisson}(\alpha N)$, and the bound $\|B^{-1}_{L}\|\leq 1$, we have \begin{align*}
		& \E \Bigl[\Bigl|\frac{M+1}{N} - \alpha\Bigr|\E\Bigl[\log(1 + \tb v^TS_{L}^{-1}v)\Big| M\Bigr]\Bigr]\\
		& \leq \Bigl(\E \Bigl|\frac{M+1}{N} - \alpha\Bigr|^2\Bigr)^{1/2} \Bigl(\E \log^2\bigl(1 + \tb v^TS_{L}^{-1}v\bigr)\Bigr)^{1/2} \leq \frac{K'}{N}.
	\end{align*}
	With the help of Lemma \ref{add:lem10} and the fact that $(S_l)_{l\geq 0}$ is independent of $M$,  we arrived at
	\begin{align*}
		\frac{1}{N}\E  \log \det A & = \alpha \E \log\bigl(1+\tb v^TS_{L}^{-1}v\bigr) + O\Bigl(\frac{1}{N}\Bigr),
	\end{align*}
	where we now read $L$ as a random variable with conditional law $L|U\sim\mbox{Poisson}(\alpha U N)$ for some $U\sim \mbox{Unif}([0,1])$ and these are independent of other randomness. From this equation, if we let $L_x$ be an independent Poisson random variable with mean $\alpha xN$, then  
	\begin{align*}
		&\frac{1}{N}\E  \log \det A = \alpha \E\Bigl[\log(1+\tb v^TS^{-1}_{L}v)\Bigr] + O\Bigl(\frac{1}{N}\Bigr)\notag\\
		& = \alpha\int_0^1\E \log(1+\tb v^TS^{-1}_{L_x}v) dx +  O\Bigl(\frac{1}{N}\Bigr)\notag\\
		& = \alpha\int_0^1 \E\log\Bigl(1+\tb\sum_{r=1}^pg_{r}^2 (S^{-1}_{L_x})_{rr} + 4\beta\sum_{1\leq r< s\leq p}g_{r}g_{s}(S^{-1}_{L_x})_{rs}\Bigr)dx + O\Bigl(\frac{1}{N}\Bigr). 
		%\label{eq9}
	\end{align*}
	Notice that for each $x$ the matrix $S_{L_x}$ is the same as the matrix $A$ in distribution, except that 
	$M \sim \mbox{Poisson}(\alpha N)$ has been replaced by $L_x \sim \mbox{Poisson}(\alpha x N)$. We can employ the same argument as \eqref{add:eq-13} to get that $$
	\sup_{x\in [0,1]}\max_{1\leq r<s\leq N}\e\bigl|(S^{-1}_{L_x})_{rs}\bigr|^2=O\Bigl(\frac{1}{N}\Bigr).$$ Consequence, using the inequality \begin{align*}
		|\log(1+u) - \log(1+v)| \leq |u-v|,\,\,u,v>0
	\end{align*} 
	yields
	\begin{align*}
		\frac{1}{N}\E  \log \det A =\int_0^1\E\log\Bigl(1+\tb\sum_{r=1}^pg_{r}^2 (S^{-1}_{L_x})_{rr} \Bigr)dx+O\Bigl(\frac{1}{\sqrt{N}}\Bigr).
	\end{align*}
	Together with Theorem \ref{thm:fixedpoint} (the only place where the boundedness of $\mathcal{D}$ is needed), which although stated for $A$, continue to hold for $S_{L_x}$ under the replacement of $\alpha$ by $\alpha x$, we have \begin{align*}
		\lim_{N\to \infty} \E \frac{1}{N} \log \det A & = \alpha\int_0^1 \E\log\Bigl(1+\tb\sum_{r=1}^pg_{r}^2 X_r(x)\Bigr)dx
	\end{align*}
	concluding our proof.
\end{proof}

\section{Proof of Theorems \ref{thm:fixedpoint} and \ref{thm:freeenergy} for general $\mathcal{D}$}

Throughout this section, we assume that $\mathcal{D}$ has finite second moment. Recall that $(g_{k,i})_{k\geq 1, 1\leq i\leq N}$ are i.i.d. copies of $\mathcal{D}$. Also recall the $v_k$'s from the definition of $A$ in \eqref{def_A}.
For a truncation level $c >0$, define \begin{align*}
	g_{k,i}^c& = g_{k,i}\mathbb{I}(|g_{k,i}| \leq c)
\end{align*}
and define the vectors $  v_k^c$ by replacing $(g_{k,i})_{1\leq i\leq N}$ in $v_k$ with $(g_{k,i}^c)_{1\leq i\leq N}$. Set \begin{align*}
	A^c = I + \tb \sum_{k \leq M} v_k^c{v_k^c}^T.
\end{align*} Likewise, $\xi_{k,i}^c$ and $\zeta_k^c$ are defined accordingly from  $\xi_{k,i}$ and $\zeta_k$ with the same truncation level $c$. Define $T_c$ as the operator $T$ in \eqref{conv:spinvariance:eq1} with the replacement of $\xi_{k,i}$ and $\zeta_k$ by ${\xi}_{k,i}^c$ and $ {\zeta}_k^c$.  Denote by $ \mu_c$ the unique fixed point of $T_c$. 

\begin{lemma}\label{prop:weakconv}
	As $c\to\infty,$ $\mu_c$ converges weakly to $\mu_\infty$, the unique fixed point of  the operator $T$.
\end{lemma}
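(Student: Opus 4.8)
The plan is to piggyback on the contraction machinery already set up in the proof of Lemma~\ref{add:lem9}. Fix $q\geq 1$ large enough that the conjugated operator $\mathcal{T}=\phi\circ T\circ\phi^{-1}$ (with $\phi(x)=-\log x$, $\gamma=(2\beta)^{-1}$) is a contraction on $(\mathcal{P}_q(\mathbb{R}_+),W_q)$ with some constant $\rho\in(0,1)$. The first point I would record is that the \emph{same} $\rho$ works for every truncated conjugated operator $\mathcal{T}_c:=\phi\circ T_c\circ\phi^{-1}$: the derivative bound in the proof of Lemma~\ref{add:lem9} shows that the contraction constant of $\mathcal{T}_c$ equals $\E[(\chi_R^c/(\gamma+\chi_R^c))^qR(p-1)]$ with $\chi_R^c=\sum_{k\le R}(\zeta_k^c)^2$, and since $\chi_R^c\leq\chi_R:=\sum_{k\le R}\zeta_k^2$ and $t\mapsto t/(\gamma+t)$ is increasing, this is at most $\rho$. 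Writing $\nu_c:=\phi(\mu_c)$ and $\nu_\infty:=\phi(\mu_\infty)$ for the (unique) fixed points of $\mathcal{T}_c$ and $\mathcal{T}$ in $\mathcal{P}_q(\mathbb{R}_+)$ — legitimate since a fixed point of $T_c$ is supported on $(0,1]$ — and noting that weak convergence of $\mu_c$ to $\mu_\infty$ follows from $W_q$-convergence of $\nu_c$ to $\nu_\infty$ via continuity of $\phi^{-1}(y)=e^{-y}$, it suffices to show $W_q(\nu_c,\nu_\infty)\to 0$.

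Next I would apply the standard perturbed-contraction estimate: by the triangle inequality and the $\rho$-contraction of $\mathcal{T}_c$,
\begin{align*}
	W_q(\nu_c,\nu_\infty)&=W_q\bigl(\mathcal{T}_c(\nu_c),\mathcal{T}(\nu_\infty)\bigr)\\
	&\leq W_q\bigl(\mathcal{T}_c(\nu_c),\mathcal{T}_c(\nu_\infty)\bigr)+W_q\bigl(\mathcal{T}_c(\nu_\infty),\mathcal{T}(\nu_\infty)\bigr)\\
	&\leq\rho\,W_q(\nu_c,\nu_\infty)+W_q\bigl(\mathcal{T}_c(\nu_\infty),\mathcal{T}(\nu_\infty)\bigr),
\end{align*}
so that $W_q(\nu_c,\nu_\infty)\leq(1-\rho)^{-1}W_q(\mathcal{T}_c(\nu_\infty),\mathcal{T}(\nu_\infty))$, and the whole problem reduces to showing $W_q(\mathcal{T}_c(\nu_\infty),\mathcal{T}(\nu_\infty))\to 0$ as $c\to\infty$.

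For this last limit I would use the obvious coupling: a single $R\sim\mathrm{Poisson}(\alpha p)$, a single family $(Y_{k,r})$ i.i.d.\ $\sim\nu_\infty$, a single family $(\zeta_k),(\xi_{k,r})$ i.i.d.\ $\sim\mathcal{D}$, and their truncations $\zeta_k^c,\xi_{k,r}^c$. With $g_R=\log\bigl(1+\sum_{k\le R}\zeta_k^2/(\gamma+\sum_{r\le p-1}\xi_{k,r}^2e^{-Y_{k,r}})\bigr)$ as in the proof of Lemma~\ref{add:lem9} and $g_R^c$ its truncated version, this yields $W_q^q(\mathcal{T}_c(\nu_\infty),\mathcal{T}(\nu_\infty))\leq\E|g_R^c-g_R|^q$. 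On the almost sure event $\{R<\infty\}$ one has $g_R^c=g_R$ as soon as $c$ exceeds $\max\{|\zeta_k|,|\xi_{k,r}|:k\le R,\ r\le p-1\}$, so $|g_R^c-g_R|^q\to 0$ a.s. For domination, using $\gamma+\sum_r\xi_{k,r}^2e^{-Y_{k,r}}\geq\gamma$ and $1+\sum_ka_k\leq\prod_k(1+a_k)$ for $a_k\geq0$, both $g_R^c$ and $g_R$ are bounded by $G:=\sum_{k\le R}\log(1+\zeta_k^2/\gamma)$, whence $|g_R^c-g_R|^q\leq 2^qG^q$; and $\E G^q<\infty$ because $G^q\leq R^{q-1}\sum_{k\le R}\log^q(1+\zeta_k^2/\gamma)$ by Jensen, $R$ has all moments, and $\log^q(1+\zeta_1^2/\gamma)\leq C(1+\zeta_1^2)$ is integrable since $\mathcal{D}$ has finite second moment. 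Dominated convergence then gives $\E|g_R^c-g_R|^q\to 0$, completing the proof.

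I do not expect a real obstacle here: Lemma~\ref{add:lem9} does the heavy lifting, and the only delicate point is the \emph{uniform-in-$c$} control of the dominating moment $\E G^q$, which is precisely where the finite-second-moment hypothesis on $\mathcal{D}$ (rather than a finite $2q$-th moment) is spent — the bound $1+\sum a_k\leq\prod(1+a_k)$ combined with $\log^q(1+x)=O(x)$ is what keeps the $q$-th power manageable. A softer alternative would bypass the conjugation: since $\{\mu_c\}$ is tight on $[0,1]$, pass to a subsequential weak limit $\mu_*$, check by conditioning on $R$, independence, and the continuous mapping theorem that $T_{c_n}(\mu_{c_n})\Rightarrow T(\mu_*)$, hence $T(\mu_*)=\mu_*$ and $\mu_*=\mu_\infty$ by Lemma~\ref{add:lem9}; I would still favor the contraction route above, as it additionally yields a quantitative rate.
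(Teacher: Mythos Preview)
Your proof is correct, but it takes a genuinely different route from the paper's. The paper argues by compactness: extract a weak subsequential limit $\mu_0$ of $(\mu_c)_{c>0}$ on $[0,1]$, then verify $T(\mu_0)=\mu_0$ via the L\'evy metric by showing $T_c(\mu_c)\Rightarrow T(\mu_0)$ (conditioning on $R$ and applying the continuous mapping theorem, exactly the ``softer alternative'' you sketch in your closing paragraph), and finally invoke Lemma~\ref{add:lem9} for uniqueness. Your argument instead stays inside the conjugated Wasserstein-$q$ framework, exploiting the monotonicity $\chi_R^c\leq\chi_R$ to get a contraction constant for $\mathcal{T}_c$ that is \emph{uniform in $c$}, and then running the standard perturbed-fixed-point estimate $W_q(\nu_c,\nu_\infty)\leq(1-\rho)^{-1}W_q(\mathcal{T}_c(\nu_\infty),\mathcal{T}(\nu_\infty))$. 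What you gain is a quantitative rate (and a cleaner logical dependence on the contraction already proved); what the paper's argument buys is that it avoids tracking any $q$-th moments and works entirely at the level of weak convergence. Both are sound; your domination step $\log^q(1+x)\leq C(1+x)$ is the right place to spend the second-moment hypothesis on $\mathcal{D}$.
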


\begin{proof}
	Let $\mu_0$ be the weak limit of some convergent subsequence $(\mu_{c_l})_{l \geq 1}$ of the family of tight probability measures $(\mu_c)_{c>0}$. We claim that $\mu_0$ is a fixed point of $T.$ If this holds, then from Lemma~\ref{add:lem9}, $\mu_0$ must be the unique fixed point of $T$, which implies that every weakly convergent subsequence of $(\mu_c)_{c>0}$ shares the same limit $\mu_0$ and this concludes that $(\mu_c)_{c>0}$ converges to $\mu_0$ as $c\to\infty.$
	
	We now turn to the proof of our claim. To ease our notation, without loss of generality, we assume that $(\mu_{c})_{c>0}$ converges to $\mu_0$. Let $d_L$ be the L\'evy metric on $\mathcal{P}([0,1])$.
	We write
	\begin{align*}
		d_L(T(\mu_0),\mu_0)&\leq d_L(T(\mu_0),T_c(\mu_c))+d_L(T_c(\mu_c),\mu_0).
	\end{align*} 
	Our proof will be completed if the two terms on the right-hand side vanishes as $c$ tends to infinity. The second term obviously converges to zero since $T_c(\mu_c)=\mu_c$ converges to $\mu_0$ weakly. To handle the first term, let $(X_{k,r}^c)_{k\geq 1,1\leq r\leq p-1}\stackrel{i.i.d.}{\sim}\mu_c$ and $(X_{k,r})_{k\geq 1,1\leq r\leq p-1}\stackrel{i.i.d.}{\sim}\mu_0.$ Assume that these are independent of each other and everything else. Now since $\mu_c$ converges to $\mu_0$ weakly and $(X_{k,r}^c)_{k\geq 1,1\leq r\leq p-1},(\xi_{k,r}^c)_{k\geq 1,1\leq r\leq p-1},	(\zeta_{k}^c)_{k\geq 1}$ are all independent of each other, it follows that for any $l\geq 0,$ we have the following joint convergence
	\begin{align*}
		&\Bigl((X_{k,r}^c)_{1\leq k\leq l,1\leq r\leq p-1},(\xi_{k,r}^c)_{1\leq k\leq l,1\leq r\leq p-1},	(\zeta_{k}^c)_{1\leq k\leq l}\Bigr)\\
		&\Rightarrow \Bigl((X_{k,r})_{1\leq k\leq l,1\leq r\leq p-1},(\xi_{k,r})_{1\leq k\leq l,1\leq r\leq p-1},	(\zeta_{k})_{1\leq k\leq l}\Bigr).
	\end{align*}
	Consequently, for any $l\geq 0,$
	\begin{align*}
		U_l^c&:=\Bigl(1+\sum_{k=1}^l \frac{2\beta (\zeta_k^c)^2}{1+2\beta\sum_{r=1}^{p-1}X_{k,r}^c(\xi_{k,r}^c)^2}\Bigr)^{-1}\\
		&\xRightarrow{c\to\infty}	\Bigl(1+\sum_{k=1}^l\frac{2\beta \zeta_k^2}{1+2\beta\sum_{r=1}^{p-1}X_{k,r}\xi_{k,r}^2}\Bigr)^{-1}=:U_l.
	\end{align*}
	Consequently,  if $Y_c\sim T_c(\mu_c)$ and $Y_0\sim T(\mu_0),$ then for any bounded continuous function $f,$
	\begin{align*}
		\e f(Y_c)&=\sum_{l=0}^\infty \e f(U_l^c) \P(R=l)\to \sum_{l=0}^\infty \e f(U_l) \P(R=l)=\e f(Y_0),
	\end{align*}
	where we used the dominated convergence theorem. As a result, $T_c(\mu_c)\Rightarrow T(\mu_0)$, which implies that $d_L(T(\mu_0),T_c(\mu_c))\to 0$ and this completes our proof. 
\end{proof}

We are ready to establish Theorem \ref{thm:freeenergy} for the general case.

\begin{proof}[\bf Proof of Theorem \ref{thm:freeenergy}] 
	Let  $F_N^c$ be the free energy of our model associated to the truncated disorders $(g_{k, i}^c)_{k \ge 1, i \ge 1}.$
	Our goal is to show
	\begin{align}\label{eq:ctr3}
		\lim_{c\to\infty}\limsup_{N\to\infty}\frac{1}{N}\bigl|\e F_N-\e F_N^c\bigr|=0.
	\end{align}
	Once we prove \eqref{eq:ctr3}, Theorem~\ref{thm:freeenergy} follows from Theorem \ref{thm:concentration}, Theorem \ref{thm:freeenergy} for the bounded case, and Lemma \ref{prop:weakconv} with an application of the continuous mapping theorem of weak convergence.
	
	To show \eqref{eq:ctr3}, from  \eqref{add:eq1} and \eqref{lem0:eq1}, we have 
	\begin{align*}
		\e F_N&=\frac{h^2}{2}\frac{\e\mbox{tr}\bigl(A^{-1}\bigr)}{N} +\frac{\e \log\det A}{2N},\\
		\e F_N^c&=\frac{h^2}{2}\frac{\e\mbox{tr}\bigl((A^c)^{-1}\bigr)}{N} +\frac{\e\log \det A^c}{2N}.
	\end{align*}
	Let $f_c(x) := \log(1+x \wedge c)$  for $x \ge 0$. Applying \eqref{eq:ctr1} to the original and truncated disorders, 
	we obtain
	\begin{align} \label{eq:ctr4}
		\limsup_N  \Bigl|	\frac{1}{N} \e \log   \det A-\frac{1}{N} \e \mbox{tr} f_c (A-I)\Bigr|& \le \frac{K}{\sqrt{c}},  \\\limsup_N  \Bigl|	\frac{1}{N} \e \log   \det A^c-\frac{1}{N} \e \mbox{tr} f_c (A^c-I)\Bigr|&\le \frac{K}{\sqrt{c}} \label{eq:ctr5}
	\end{align} 
	for some constant $K$ that does not depend on $c$ and $N$. Next, let $\mu_{A-I}$ and $\mu_{A^c-I}$ be the empirical spectral measures of $A-I$ and $A^c-I$, respectively. 	From \eqref{empiricalsd}, we obtain that
	\begin{align}
		\Bigl|  \frac{1}{N} \e \mbox{tr} f_c (A-I) -  \frac{1}{N} \e \mbox{tr} f_c (A^c-I)  \Bigr| &=  \Bigl| \e \int_0^\infty  f_c d\bigl(\mu_{A-I}-\mu_{A^c-I}\bigr)\Bigr| \nonumber \\
		&\le \frac{ \| f_c \|_{\mbox{\tiny BV}} \e [\mathrm{rank}(A  - A^c)]}{N} \nonumber \\
		&\le  \frac{ 2\log (1+c) \e \sum_{k=1}^M \mathbb{I}( v_k \ne v_k^c )}{N} \nonumber \\
		&\le 2\alpha p   \log (1+c)   \P( |g_{1, 1}| > c).\label{eq:ctr6}
	\end{align} 
	As a consequence of \eqref{eq:ctr4}, \eqref{eq:ctr5} and \eqref{eq:ctr6},  we have the following bound
	\begin{align*}
		\limsup_N  \Bigl|	\frac{1}{N} \e \log   \det A- 	\frac{1}{N} \e \log   \det A^c  \Bigr| &\le  \frac{2K}{\sqrt{c}} + 2\alpha p   \log (1+c)   \P( |g_{1, 1}| > c)\\
		&\le \frac{2K}{\sqrt{c}} +2 \alpha p   \log (1+c)   \frac{ \e |g_{1, 1}| }{c} ,
	\end{align*}
	which yields that
	\begin{equation}\label{eq:ctr8}
		\lim_{c \to \infty}  \limsup_N  \Bigl|	\frac{1}{N} \e \log   \det A- 	\frac{1}{N} \e \log   \det A^c  \Bigr|  = 0. 
	\end{equation}
	Next we define  $f(x) = 1/(1+x)$ for $x \ge 0$ and argue similarly as above to conclude that 
	\begin{align*}
		\Bigl|  \frac{1}{N} \e\mbox{tr}\bigl(A^{-1}\bigr)  -   \frac{1}{N} \e\mbox{tr}\bigl((A^c)^{-1}\bigr)  \Bigr| &=  \Bigl| \e \int_0^\infty  f d\bigl(\mu_{A-I}-\mu_{A^c-I}\bigr)\Bigr| \le 2\alpha p    \P( |g_{1, 1}| > c),  
	\end{align*} 
	which implies that 
	\begin{equation}\label{eq:ctr9}
		\lim_{c \to \infty}  \limsup_N  \Bigl|  \frac{1}{N} \e\mbox{tr}\bigl(A^{-1}\bigr)  -   \frac{1}{N} \e\mbox{tr}\bigl((A^c)^{-1}\bigr)  \Bigr| = 0. 
	\end{equation}
	Now  \eqref{eq:ctr3} follows immediately from \eqref{eq:ctr8} and \eqref{eq:ctr9}.
\end{proof}

The proof of Theorem \ref{thm:fixedpoint} is based on  the following lemma.

\begin{lemma}\label{sec8:lem1}
	We have that
	\begin{align*}
		\lim_{c\to\infty}	\limsup_{N\to\infty}\e\bigl|A_{11}^{-1}-\bigl((A^c)^{-1}\bigr)_{11}\bigr|=0.
	\end{align*}
\end{lemma}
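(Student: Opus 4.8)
Plan of proof of Lemma~\ref{sec8:lem1}. The plan is to avoid any pointwise control of a single diagonal resolvent entry and instead combine site‑exchangeability with a nuclear‑norm estimate. First I would observe that the joint law of the pair $(A,A^c)$ is invariant under simultaneous permutations of rows and columns of $\R^N$: both matrices are built from the same i.i.d.\ array $(g_{k,i})$, the same index vectors $\mathcal{I}$ (whose law is permutation‑invariant), and the truncation acts entrywise, so relabelling the $g$'s and the $I(k,r)$'s by a permutation $\pi$ turns $(A,A^c)$ into $(P_\pi A P_\pi^T, P_\pi A^c P_\pi^T)$ without changing its distribution. Hence $\bigl((A^{-1})_{ii},((A^c)^{-1})_{ii}\bigr)$ has the same law for every $i$, and therefore
\[
 \e\bigl|A_{11}^{-1}-\bigl((A^c)^{-1}\bigr)_{11}\bigr|
 =\frac1N\,\e\sum_{i=1}^N\bigl|\bigl(A^{-1}-(A^c)^{-1}\bigr)_{ii}\bigr|.
\]
The point of this step is that the right‑hand side is a $1/N$‑averaged quantity, which I will bound by $O(\epsilon(c))$ for a quantity $\epsilon(c)\to0$ that does not depend on $N$.

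Next, for any symmetric matrix $D$ with spectral decomposition $D=\sum_j\lambda_j u_ju_j^T$ one has $|D_{ii}|\le\sum_j|\lambda_j|(u_j)_i^2$, so $\sum_i|D_{ii}|\le\sum_j|\lambda_j|=\|D\|_*$, the nuclear (trace) norm. Applying this with $D=A^{-1}-(A^c)^{-1}=A^{-1}(A^c-A)(A^c)^{-1}$ (the resolvent identity), and using submultiplicativity $\|XYZ\|_*\le\|X\|_{\mathrm{op}}\|Y\|_*\|Z\|_{\mathrm{op}}$ together with $\|A^{-1}\|_{\mathrm{op}},\|(A^c)^{-1}\|_{\mathrm{op}}\le1$ (since $A,A^c\ge I$), I get
\[
 \sum_{i=1}^N\bigl|\bigl(A^{-1}-(A^c)^{-1}\bigr)_{ii}\bigr|\le\|A^c-A\|_*.
\]

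Then I would bound $\|A^c-A\|_*$ edge by edge. Since $A^c-A=\tb\sum_{k\le M}\bigl(v_k^c(v_k^c)^T-v_kv_k^T\bigr)$ and each summand vanishes when $v_k=v_k^c$, and each summand has rank at most $2$ and operator norm at most $\|v_k\|^2+\|v_k^c\|^2\le2\|v_k\|^2$, we have $\|v_k^c(v_k^c)^T-v_kv_k^T\|_*\le4\|v_k\|^2=4\sum_{r=1}^pg_{k,I(k,r)}^2$, whence
\[
 \|A^c-A\|_*\le 8\beta\sum_{k\le M}\Bigl(\sum_{r=1}^pg_{k,I(k,r)}^2\Bigr)\mathbb{I}\bigl(\exists\,r':\ |g_{k,I(k,r')}|>c\bigr).
\]
Taking expectations, using that $M$ is independent of $(g,\mathcal{I})$ with $\e M=\alpha N$, and that $g_{1,I(1,1)},\dots,g_{1,I(1,p)}$ are i.i.d.\ copies of $\mathcal{D}$, I arrive at $\e\|A^c-A\|_*\le 8\beta\alpha N\,\epsilon(c)$ with $\epsilon(c):=p\,\e[Y^2\mathbb{I}(|Y|>c)]+p(p-1)\e[Y^2]\,\P(|Y|>c)$ for $Y\sim\mathcal{D}$, which tends to $0$ as $c\to\infty$ by dominated convergence since $\mathcal{D}$ has finite second moment. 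Dividing by $N$ gives $\e\bigl|A_{11}^{-1}-((A^c)^{-1})_{11}\bigr|\le 8\alpha\beta\,\epsilon(c)$ uniformly in $N$, so letting first $N\to\infty$ and then $c\to\infty$ yields the claim.

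The only delicate point is the initial reduction: the naive operator‑norm bound $|(A^{-1}-(A^c)^{-1})_{11}|\le\|A^c-A\|_{\mathrm{op}}$ is useless, because the ``bad'' edges $k$ with $v_k\ne v_k^c$ are exactly those where $\|v_k\|$ can be large, so $\|A^c-A\|_{\mathrm{op}}$ need not be small. It is the combination of passing to the nuclear norm (so that the many small rank‑one bad contributions add up at the right scale, namely $O(N\epsilon(c))$) with the site‑exchangeability averaging (which supplies the crucial factor $1/N$) that makes the estimate close. Everything else is routine.
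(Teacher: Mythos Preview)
Your proof is correct, and it follows a genuinely different route from the paper's.

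The paper introduces the intermediate matrix $\mathcal{A}=I+2\beta\sum_{k\le M}\epsilon_k v_kv_k^T$, where $\epsilon_k=\mathbb{I}(\max_{r\le p}|g_{k,I(k,r)}|\le c)$, and exploits the monotonicity $A\ge\mathcal{A}$ and $A^c\ge\mathcal{A}$ (in the PSD order) to bound $|A_{11}^{-1}-((A^c)^{-1})_{11}|$ by $(\mathcal{A}_{11}^{-1}-A_{11}^{-1})+(\mathcal{A}_{11}^{-1}-((A^c)^{-1})_{11})$. After the same site-exchangeability averaging you use, this becomes a difference of normalized traces, which the paper controls by the rank inequality for empirical spectral distributions \eqref{empiricalsd} applied to $f(x)=1/(1+x)$; the resulting bound is $2\alpha p^2\,\P(|g_{1,1}|>c)$. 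By contrast, you bypass the auxiliary $\mathcal{A}$ entirely, going straight through the resolvent identity and the nuclear-norm inequality $\sum_i|D_{ii}|\le\|D\|_*$ together with $\|A^{-1}(A^c-A)(A^c)^{-1}\|_*\le\|A^c-A\|_*$. What each buys: the paper's bound depends only on the tail probability $\P(|g_{1,1}|>c)$ and so in principle works under no moment hypothesis on $\mathcal{D}$, whereas your $\epsilon(c)=p\,\e[Y^2\mathbb{I}(|Y|>c)]+p(p-1)\e[Y^2]\P(|Y|>c)$ genuinely uses the finite second moment (already assumed throughout). On the other hand, your argument is more streamlined, avoiding both the introduction of $\mathcal{A}$ and the appeal to the empirical-spectral-measure rank inequality.
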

\begin{proof}
	Define 
	\[ \mathcal{A}=  I+2\beta\sum_{k=1}^M \epsilon_k v_kv_k^T = I+2\beta\sum_{k=1}^M \epsilon_k (v_k^c) (v_k^c)^T,\]
	where $\epsilon_k$'s are Bernoulli variables given by 
	\[ \epsilon_k  =  \left\{ \begin{array}{ll} 1, &\mbox{if $|g_{k,I(k,r)}|\leq c$ for all $1\leq r\leq p$,} \\
		\\
		0, & \text{otherwise}.
	\end{array}   \right. \]
	Note that $\epsilon_k = 1$ implies that $v_k = v_k^c$, which justifies the second equality in the definition of $\mathcal{A}$. We have $A \ge\mathcal{A}$ and $A^c \ge \mathcal{A}$. Therefore, $ A^{-1}_{11} \le \mathcal{A}^{-1}_{11}$ and $ ((A^c)^{-1})_{11} \le \mathcal{A}_{11}^{-1}$. Consequently,
	\begin{align*}
		\bigl|A_{11}^{-1}- ((A^c)^{-1})_{11}\bigr| \le \bigl ( \mathcal{A}_{11}^{-1} -  A_{11}^{-1} \bigr) + \bigl( \mathcal{A}_{11}^{-1}-  ((A^c)^{-1})_{11}\bigr).
	\end{align*} 
	Taking expectation and using the symmetry of the spin coordinates, we obtain
	\begin{align*}
		\e \bigl|A_{11}^{-1}- ((A^c)^{-1})_{11}\bigr|  &\le \Bigl (  \frac{1}{N} \e\mbox{tr}\mathcal{A}^{-1} -   \frac{1}{N} \e\mbox{tr}\bigl(A^{-1}\bigr)  \Bigr) + \Bigl (  \frac{1}{N} \e\mbox{tr}\mathcal{A}^{-1}   -   \frac{1}{N} \e\mbox{tr}\bigl((A^c)^{-1}\bigr)  \Bigr) \\
		&=  \int_0^\infty  f d\bigl(\mu_{\mathcal{A}-I}-\mu_{A-I}\bigr) + \int_0^\infty  f d\bigl(\mu_{\mathcal{A}-I}-\mu_{A^c-I}\bigr), 
	\end{align*}
	where $f(x)= 1/(1+x)$ for $x \ge 0$ and $\mu_{A-I}, \mu_{A^c-I}$ and $\mu_{\mathcal{A}-I}$ are the empirical spectral measures of $A-I, A^c  - I$ and $\mathcal{A} -I$, respectively. Observe that both $ \mathrm{rank}( A - \mathcal{A})$  and $ \mathrm{rank}(A^c- \mathcal{A})$ are bounded above by  
	$p\sum_{k=1}^M \mathbb{I} ( \epsilon_k = 0).$
	Consequently, 
	\begin{align*}
		\e  \int_0^\infty  f d\bigl(\mu_{\mathcal{A}-I}-\mu_{A-I}\bigr) &\le \frac{ \| f_c \|_{\mbox{\tiny BV}} \e [ \mathrm{rank}(A  - \mathcal{A})]}{N} \\
		&\le   \frac{ p\e\sum_{k=1}^M \mathbb{I} ( \epsilon_k = 0)}{N}\le  \alpha p^2     \P( |g_{1, 1}| > c).
	\end{align*} 
	The same bound also holds for $\e  \int_0^\infty  f d\bigl(\mu_{\mathcal{A}-I}-\mu_{A^c-I}\bigr)$. Therefore, we arrive at the bound 
	\[ \e \bigl|A_{11}^{-1}- ((A^c)^{-1})_{11}\bigr|  \le 2 \alpha p^2     \P( |g_{1, 1}| > c)\]
	and the assertion follows. 
\end{proof}
\begin{proof}[\bf Proof of Theorem \ref{thm:fixedpoint}] Let $n\geq 1$ be fixed and $f$ be a Lipschitz function on $\mathbb{R}^n.$
For each fixed $c>0,$ from Theorem \ref{thm:fixedpoint} for bounded $\mathcal{D}$ case, we have
\begin{align*}
	\lim_{N\to\infty}\e f\bigl((A^c)^{-1})_{11},\ldots, (A^c)^{-1})_{nn}\bigr)=\e f\bigl(X_{1}^c,\ldots,X_n^c\bigr),
\end{align*}
where $X_1^c,\ldots,X_n^c$ are i.i.d. copies of $\mu_c.$ It follows that from Lemma \ref{prop:weakconv},
\begin{align*}
	\lim_{c\to\infty}\lim_{N\to\infty}\e f\bigl((A^c)^{-1})_{11},\ldots, (A^c)^{-1})_{nn}\bigr)=\lim_{c\to\infty}\e f\bigl(X_{1}^c,\ldots,X_n^c\bigr)=\e f\bigl(X_1,\ldots,X_n\bigr)
\end{align*}
for $X_1,\ldots,X_n$ i.i.d. copies of $\mu_\infty$, the unique fixed point of the operator $T.$ Finally, our proof is completed by applying Lemma~\ref{sec8:lem1},
\begin{align*}
	\lim_{c\to\infty}\limsup_{N\to\infty}\Bigl|	\e f\bigl(A_{11}^{-1},\ldots,A_{nn}^{-1}\bigr)-\e f\bigl((A^c)^{-1})_{11},\ldots, (A^c)^{-1})_{nn}\bigr)\Bigr|=0.
\end{align*}
\end{proof}
\bibliographystyle{imsart-number} % Style BST file (imsart-number.bst or imsart-nameyear.bst)
\bibliography{ref}       % Bibliography file (usually '*.bib')

%% or include bibliography directly:
%\begin{thebibliography}{}
%\bibitem{b1}
%\end{thebibliography}

\end{document}